\DeclareMathAlphabet{\mathpzc}{OT1}{pzc}{m}{it}
\newtheorem{theorem}{Theorem}[section]
\newtheorem{proposition}[theorem]{Proposition}
\newtheorem{lemma}[theorem]{Lemma}
\newtheorem{corollary}[theorem]{Corollary}
\newtheorem{example}[theorem]{Example}
\newtheorem{remark}[theorem]{Remark}
\numberwithin{equation}{section}
\begin{document}

\title{Adjoint orbits of semi-simple Lie groups and Lagrangean submanifolds}
\author{Elizabeth Gasparim, Lino Grama and Luiz A. B. San Martin\thanks{%
EG was supported by Fapesp grant no. 2012/10179-5, LG by Fapesp grant no.
2012/07482-8, and LSM by CNPq grant no.\ 303755/09-1, Fapesp grant no.\
2012/18780-0 and CNPq/Universal grant no 476024/2012-9, Address: Imecc -
Unicamp, Departamento de Matem\'{a}tica. Rua S\'{e}rgio Buarque de Holanda,
651, Cidade Universit\'{a}ria Zeferino Vaz. 13083-859 Campinas - SP, Brasil.
E-mails: smartin@ime.unicamp.br, etgasparim@gmail.com, linograma@gmail.com.}}
\maketitle
\tableofcontents

\section{Introduction}

Let $G$ be a noncompact (real or complex) semi-simple Lie group with Lie
algebra $\mathfrak{g}$. The purpose of this paper is to study the
homogeneous spaces $G/Z_{H}$ with $Z_{H}$ the centralizer in $G$ of an
element $H$ belonging to a Cartan subalgebra of $\mathfrak{g}$.

Our motivation to study these homogeneous spaces is the construction of
Lefschetz fibrations in \cite{GGS}. The full description of these fibrations
requires a further understanding of the symplectic geometry (or rather
geometries) of $G/Z_{H}$, in particular those properties related to the
description of the Fukaya category of the Lagrangean vanishing cycles. In
this paper we get some of these properties that have independent interest.

To be more specific let $\mathfrak{a}$ be a Cartan--Chevalley algebra of $%
\mathfrak{g}$, that is, the Lie algebra of the $A$ component of an Iwasawa
decomposition $G=KAN$. We select a Weyl chamber $\mathfrak{a}^{+}\subset 
\mathfrak{a}$ and pick $H_{0}\in \mathrm{cl}\mathfrak{a}^{+}$. The adjoint
orbit $\mathrm{Ad}\left( G\right) H_{0}$ identifies with the homogeneous
space $G/Z_{H_{0}}$. Also the subadjoint orbit $\mathrm{Ad}\left( K\right)
H_{0}$ identifies with a flag manifold $\mathbb{F}_{H_{0}}=G/P_{H_{0}}$
where $P_{H_{0}}$ is the parabolic subgroup defined by $H_{0}$, which
contains $Z_{H_{0}}$.

In this paper we get other realizations of $G/Z_{H_{0}}$. First we prove
that $G/Z_{H_{0}}$ has the structure of a vector bundle over $\mathbb{F}%
_{H_{0}}$ isomorphic to the cotangent bundle $T^{\ast }\mathbb{F}_{H_{0}}$.
This fact was proved before by Azad-Van den Ban-Biswas \cite{ABB} using a
different approach. Here we exploit more decisively the associated vector
bundle construction obtained by $P_{H_{0}}$-representations by viewing $%
G\rightarrow \mathbb{F}_{H_{0}}=G/P_{H_{0}}$ as a $P_{H_{0}}$-principal
bundle (see Subsection \ref{subsecassocfibr}).

The isomorphism $\mathrm{Ad}\left( G\right) H_{0}\approx T^{\ast }\mathbb{F}%
_{H_{0}}$ provides the adjoint orbit with two different actions, namely the
natural transitive action on $\mathrm{Ad}\left( G\right) H_{0}$ and the
linear action on $T^{\ast }\mathbb{F}_{H_{0}}$ obtained by lifting the
action of $G$ on $\mathbb{F}_{H_{0}}$. The later action is not transitive
since the zero section is invariant. Thus one is asked to build a transitive
action on the cotangent bundle $T^{\ast }\mathbb{F}_{H_{0}}$ different from
the linear action. We do so by constructing a Lie algebra $\theta \left( 
\mathfrak{g}\right) $ of Hamiltonian vector fields (with respect to the
canonical symplectic form $\Omega $ of $T^{\ast }\mathbb{F}_{\Theta }$)
which is isomorphic to $\mathfrak{g}$. The elements of $\theta \left( 
\mathfrak{g}\right) $ are complete vector fields and hence the infinitesimal
action given by $\theta \left( \mathfrak{g}\right) $ integrates to an action
of a Lie group, by a classical theorem of Palais \cite{pal}. This action is
transitive and Hamiltonian by construction. The isotropy subgroup of the
transitive action is $Z_{H_{0}}$ and thus $T^{\ast }\mathbb{F}_{H_{0}}$ gets
identified with $G/Z_{H_{0}}$. It turns out that the moment map $\mu \colon
T^{\ast }\mathbb{F}_{H_{0}}\rightarrow \mathfrak{g}$ of the Hamiltonian
action takes values in $\mathrm{Ad}\left( G\right) H_{0}$ and is the inverse
of the previously defined map $\mathrm{Ad}\left( G\right) H_{0}\rightarrow
T^{\ast }\mathbb{F}_{H_{0}}$.

In another realization of $G/Z_{H_{0}}$, it is compactified to an algebraic
projective variety, namely the product $\mathbb{F}_{H_{0}}\times \mathbb{F}%
_{H_{0}^{\ast }}$ where $\mathbb{F}_{H_{0}^{\ast }}$ is the flag manifold
dual to $\mathbb{F}_{H_{0}}$ (see Section \ref{secsplit}). This is obtained
by the diagonal action $g\left( x,y\right) =\left( gx,gy\right) $ of $G$ on $%
\mathbb{F}_{H_{0}}\times \mathbb{F}_{H_{0}^{\ast }}$ which has just one open
and dense orbit whose isotropy group is $Z_{H_{0}}=Z_{H_{0}^{\ast }}$ and
hence realizes $G/Z_{H_{0}}$. The embedding $G/Z_{H_{0}}\rightarrow \mathbb{F%
}_{H_{0}}\times \mathbb{F}_{H_{0}^{\ast }}$ induces several geometric
structures on $G/Z_{H_{0}}$ inherited from those of $\mathbb{F}%
_{H_{0}}\times \mathbb{F}_{H_{0}^{\ast }}$. The point is that $\mathbb{F}%
_{H_{0}}\times \mathbb{F}_{H_{0}^{\ast }}$ is a flag manifold of $G\times G$
and hence admits Riemmannian metrics (Hermitian in the complex case)
invariant by the compact group $K\times K$. These metrics on $\mathbb{F}%
_{H_{0}}\times \mathbb{F}_{H_{0}^{\ast }}$ induce new metrics on $%
G/Z_{H_{0}} $, as well as new symplectic structures in the complex case.

The embedding $G/Z_{H_{0}}\rightarrow \mathbb{F}_{H_{0}}\times \mathbb{F}%
_{H_{0}^{\ast }}$ combined with representations of $\mathfrak{g}$ yields
realizations of $G/Z_{H_{0}}$ as orbits on $V\otimes V^{\ast }$ where $V$ is
the space of an irreducible representation of $\mathfrak{g}$ with highest
defined by $H_{0}$ (see Section \ref{secrepre}).

The last two realizations of $G/Z_{H_{0}}$ are used in Sections \ref{compact}
and \ref{seclagraph} to build a class of Lagrangean submanifolds in $%
G/Z_{H_{0}}$ with respect to the symplectic structures inherited from the
embedding $G/Z_{H_{0}}\rightarrow \mathbb{F}_{H_{0}}\times \mathbb{F}%
_{H_{0}^{\ast }}$.

\section{Adjoint orbits and cotangent bundles of flags}

Let $\mathfrak{g}$ be a noncompact semisimple Lie algebra (real or complex)
and let $G$ be a connected Lie group with finite centre and Lie algebra $%
\mathfrak{g}$ (for example $G$ may be $\mathrm{Aut}_{0}\left( \mathfrak{g}%
\right) $, the component of the identity of the group of automorphisms).

Usual notation:

\begin{enumerate}
\item The Cartan decomposition: $\mathfrak{g}=\mathfrak{k}\oplus \mathfrak{s}
$, with global decomposition $G=KS$.

\item Iwasawa decomposition: $\mathfrak{g}=\mathfrak{k}\oplus \mathfrak{a}%
\oplus \mathfrak{n}$, with global decomposition $G=KAN$.

\item $\Pi $ is a set of roots of $\mathfrak{a}$, with a choice of a set of
positive roots $\Pi ^{+}$ and simple roots $\Sigma \subset \Pi ^{+}$ such
that $\mathfrak{n}^{+}=\sum_{\alpha >0}\mathfrak{g}_{\alpha }$ and $%
\mathfrak{g}_{\alpha }$ is the root space of the root $\alpha $. The
corresponding Weyl chamber is $\mathfrak{a}^{+}$.

\item A subset $\Theta \subset \Sigma $ defines a parabolic subalgebra $%
\mathfrak{p}_{\Theta }$ with parabolic subgroup $P_{\Theta }$ and a flag $%
\mathbb{F}_{\Theta }=G/P_{\Theta }$. The flag is also $\mathbb{F}_{\Theta
}=K/K_{\Theta }$, where $K_{\Theta }=K\cap P_{\Theta }$. The Lie algebra of $%
K_{\Theta }$ is denoted $\mathfrak{k}_{\Theta }$.

\item $H_{\Theta }\in \mathrm{cl}\mathfrak{a}^{+}$ is \textit{characteristic}
for $\Theta \subset \Sigma $ if $\Theta =\{\alpha \in \Sigma :\alpha \left(
H_{\Theta }\right) =0\}$. Then, $\mathfrak{p}_{\Theta }=\bigoplus_{\lambda
\geq 0}\mathfrak{g}_{\lambda }$ where $\lambda $ runs through the
nonnegative eigenvalues of $\mathrm{ad}\left( H_{\Theta }\right) $.

Conversely, starting with $H_{0}\in \mathrm{cl}\mathfrak{a}^{+}$ we define $%
\Theta _{H_{0}}=\{\alpha \in \Sigma :\alpha \left( H_{0}\right) =0\}$ and in
the several objects requiring a subscript $\Theta $ we use $H_{0}$ instead
of $\Theta _{H_{0}}$. For instance, $\mathbb{F}_{H_{0}}=\mathbb{F}_{\Theta
_{H_{0}}}$, etc.

\item $b_{\Theta }=1\cdot K_{\Theta }=1\cdot P_{\Theta }$ denotes the origin
of the flag $\mathbb{F}_{\Theta }=K/K_{\Theta }=G/P_{\Theta }$.

\item We write 
\begin{equation*}
\mathfrak{n}_{\Theta }^{+}=\sum_{\alpha \left( H_{\Theta }\right) >0}%
\mathfrak{g}_{\alpha }\qquad \mathfrak{n}_{\Theta }^{-}=\sum_{\alpha \left(
H_{\Theta }\right) <0}\mathfrak{g}_{\alpha }
\end{equation*}%
so that $\mathfrak{g}=\mathfrak{n}_{\Theta }^{-}\oplus \mathfrak{z}_{\Theta
}\oplus \mathfrak{n}_{\Theta }^{+}$, where $\mathfrak{z}_{\Theta }$ is the
centralizer of $H_{\Theta }$ in $\mathfrak{g}$.

\item $Z_{\Theta }=\{g\in G:\mathrm{Ad}\left( g\right) H_{\Theta }=H_{\Theta
}\}$ is the centralizer in $G$ of the characteristic element $H_{\Theta }$.
Its Lie algebra is $\mathfrak{z}_{\Theta }$. Moreover, $K_{\Theta }$ is the
centralizer of $H_{\Theta }$ in $K$: 
\begin{equation*}
K_{\Theta }=Z_{K}\left( H_{\Theta }\right) =Z_{\Theta }\cap K=\{k\in K:%
\mathrm{Ad}\left( k\right) H_{\Theta }=H_{\Theta }\}.
\end{equation*}
\end{enumerate}

\begin{theorem}
\label{teodifeocotan}The adjoint orbit $\mathcal{O}\left( H_{\Theta }\right)
=\mathrm{Ad}\left( G\right) \cdot H_{\Theta }\approx G/Z_{\Theta }$ of the
characteristic element $H_{\Theta }$ is a $C^{\infty }$ vector bundle over $%
\mathbb{F}_{\Theta }$ isomorphic to the cotangent bundle $T^{\ast }\mathbb{F}%
_{\Theta }$. Moverover, we can write down a diffeomorphism $\iota :\mathrm{Ad%
}\left( G\right) \cdot H_{\Theta }\rightarrow T^{\ast }\mathbb{F}_{\Theta }$
such that

\begin{enumerate}
\item $\iota $ is equivariant with respect to the actions of $K$, that is,
for all $k\in K$, 
\begin{equation*}
\iota \circ \mathrm{Ad}\left( k\right) =\widetilde{k}\circ \iota
\end{equation*}%
where $\widetilde{k}$ is the lifting to $T^{\ast }\mathbb{F}_{\Theta }$ (via
the differential) of the action of $k$ on $\mathbb{F}_{\Theta }$.

\item The pullback of the canonical symplectic form on $T^{\ast }\mathbb{F}%
_{\Theta }$ by $\iota $ is the (real) Kirillov--Kostant--Souriaux form on
the orbit.
\end{enumerate}
\end{theorem}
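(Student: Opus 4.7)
My approach is to realize $\iota$ via the associated bundle construction, deduce $K$-equivariance from the construction itself, and verify the symplectic identity by an explicit computation at the base point.

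\emph{Fiber identification.} The inclusion $Z_\Theta\subset P_\Theta$ gives a $G$-equivariant fibration
\[
\pi\colon \mathcal O(H_\Theta)\cong G/Z_\Theta\longrightarrow G/P_\Theta=\mathbb F_\Theta .
\]
The Langlands-type decomposition $P_\Theta=Z_\Theta\cdot N_\Theta^+$ (with $N_\Theta^+=\exp\mathfrak n_\Theta^+$ and $Z_\Theta\cap N_\Theta^+=\{e\}$), together with $\mathrm{Ad}(Z_\Theta)H_\Theta=\{H_\Theta\}$, identifies the fiber $\pi^{-1}(b_\Theta)=\mathrm{Ad}(P_\Theta)H_\Theta$ with $N_\Theta^+$. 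Because $[\mathfrak n_\Theta^+,\mathfrak n_\Theta^+]\subset\mathfrak n_\Theta^+$ and $[\mathfrak n_\Theta^+,H_\Theta]\subset\mathfrak n_\Theta^+$, one has $\mathrm{Ad}(P_\Theta)H_\Theta\subset H_\Theta+\mathfrak n_\Theta^+$, and the projection $H_\Theta+X\mapsto X$ is a diffeomorphism $\mathrm{Ad}(P_\Theta)H_\Theta\to\mathfrak n_\Theta^+$ (its linearization at $H_\Theta$ is $-\mathrm{ad}(H_\Theta)|_{\mathfrak n_\Theta^+}$, which is invertible with strictly positive eigenvalues). On the cotangent side, $T^*_{b_\Theta}\mathbb F_\Theta\cong(\mathfrak g/\mathfrak p_\Theta)^*\cong\mathfrak p_\Theta^\perp=\mathfrak n_\Theta^+$ under the Cartan--Killing form $B$.

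\emph{$K$-equivariant globalization.} Both identifications of the fiber over $b_\Theta$ with $\mathfrak n_\Theta^+$ carry the same $K_\Theta$-action, namely $\mathrm{Ad}|_{K_\Theta}$: on the orbit side because $K_\Theta\subset Z_\Theta$ commutes with $H_\Theta$ and preserves the root-space decomposition, and on the cotangent side because the contragredient of $\mathrm{Ad}(k_\Theta)|_{\mathfrak n_\Theta^-}$ under $B$ is $\mathrm{Ad}(k_\Theta)|_{\mathfrak n_\Theta^+}$ by $\mathrm{Ad}$-invariance of $B$. Since $\mathbb F_\Theta=K/K_\Theta$, the common fiber identification lifts to a $K$-equivariant bundle isomorphism $\iota\colon\mathcal O(H_\Theta)\to T^*\mathbb F_\Theta$; using $G=KP_\Theta$ it can be written explicitly as
\[
\iota\bigl(\mathrm{Ad}(kp)H_\Theta\bigr)=\widetilde k\cdot B\bigl(\mathrm{Ad}(p)H_\Theta-H_\Theta,\,\cdot\,\bigr)\big|_{\mathfrak n_\Theta^-},
\]
well-defined because the ambiguity $(k,p)\mapsto(kk_\Theta^{-1},k_\Theta p)$ for $k_\Theta\in K_\Theta$ is absorbed by the $K_\Theta$-equivariance just established. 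Item (1) is then immediate from the construction.

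\emph{Symplectic identity.} The main technical point is item (2); I verify it at $H_\Theta\in\mathcal O(H_\Theta)$, which $\iota$ sends to $0\in T^*_{b_\Theta}\mathbb F_\Theta$. Here $T_{H_\Theta}\mathcal O(H_\Theta)=\mathrm{ad}(\mathfrak g)H_\Theta=\mathfrak n_\Theta^-\oplus\mathfrak n_\Theta^+$ (since $\mathrm{ad}(H_\Theta)$ is invertible on each root space outside $\mathfrak z_\Theta$), and $T_{(b_\Theta,0)}T^*\mathbb F_\Theta=\mathfrak n_\Theta^-\oplus\mathfrak n_\Theta^+$ as horizontal$\oplus$vertical. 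From the explicit formula one reads off that $d\iota_{H_\Theta}$ acts on these summands through $\mathrm{ad}(H_\Theta)$-eigenvalue rescalings combined with the Killing duality. At a zero-section point the canonical form reduces to the tautological pairing, which after the identifications becomes $B(\xi_1,v_2)-B(\xi_2,v_1)$. The KKS form at $H_\Theta$ evaluates as $B(H_\Theta,[Y_1,Y_2])$; writing $Y_i=X_i+\xi_i$ with $X_i\in\mathfrak n_\Theta^-$ and $\xi_i\in\mathfrak n_\Theta^+$, only the cross terms survive because $B(H_\Theta,[\mathfrak n^\pm,\mathfrak n^\pm])=0$, and the ad-invariance $B(H_\Theta,[X,\xi])=B([H_\Theta,X],\xi)$ reduces them to the previous expression modulo the root-eigenvalue factors already carried by $d\iota$. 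The chief obstacle is this sign and eigenvalue bookkeeping together with the propagation of equality off the zero section: $K$-equivariance transports the identity only along $K\cdot H_\Theta$, so one must genuinely repeat the computation along the fiber via the $N_\Theta^+$ parametrization, or else appeal to the Hamiltonian $G$-action on $T^*\mathbb F_\Theta$ constructed later in the paper, whose moment map turns out to be $\iota^{-1}$.
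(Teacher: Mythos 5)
Your construction of the bundle isomorphism and your proof of item (1) are sound and follow essentially the same route as the paper: the paper realizes $\mathcal O(H_\Theta)$ as the associated bundle $K\times_\rho\mathfrak n_\Theta^+$ over $K/K_\Theta$ (with $\rho=\mathrm{Ad}|_{K_\Theta}$ on $\mathfrak n_\Theta^+$), identifies $T^*\mathbb F_\Theta$ with the same associated bundle via the Cartan--Killing duality $\mathfrak n_\Theta^+\cong(\mathfrak n_\Theta^-)^*$, and arrives at exactly your explicit formula $\mathrm{Ad}(k)(H_\Theta+X)\mapsto\langle\mathrm{Ad}(k)X,\cdot\rangle\in T^*_{kb_\Theta}\mathbb F_\Theta$. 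Your justification that the fiber over $b_\Theta$ is an affine space diffeomorphic to $\mathfrak n_\Theta^+$ and that the two $K_\Theta$-module structures agree is correct.

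Item (2), however, is not proved. You verify the identity $\iota^*\Omega=\omega_{\mathrm{KKS}}$ only at the single point $H_\Theta$, and even there the decisive step --- matching the eigenvalue factors introduced by $d\iota_{H_\Theta}$ (whose horizontal part involves $\mathrm{ad}(H_\Theta)$ acting on $\mathfrak n_\Theta^-$, with root-dependent eigenvalues $\alpha(H_\Theta)$) against the tautological pairing --- is asserted rather than carried out; a priori the two bilinear forms could differ by these nonconstant factors, so this is exactly where the claim could fail and must be checked. More importantly, you yourself note that $K$-equivariance only propagates the identity along the zero section $\mathrm{Ad}(K)H_\Theta$, and that one must either redo the computation along a full fiber $H_\Theta+\mathfrak n_\Theta^+$ or invoke a moment-map argument; you execute neither. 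The paper in fact takes the second route: it defers item (2) entirely to Theorem \ref{propmomento}, where an explicitly Hamiltonian (hence nonlinear) $G$-action is built on $T^*\mathbb F_\Theta$ from the vector fields $\theta(A)=A^\#$, $\theta(X)=X^\#+V_X$, its equivariant moment map $\mu$ is computed, and $\mu$ is shown to equal $\iota^{-1}$; the pullback identity then follows from the standard moment-map fact $\mu^*\omega_{\mathrm{KKS}}=\Omega$. Without that construction (or a completed direct computation on a whole fiber), item (2) remains a genuine gap in your argument.
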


The diffeomorphism $\iota \colon \mathcal{O}\left( H_{\Theta }\right)
\rightarrow T^{\ast }\mathbb{F}_{\Theta }$ will be defined in two steps,
namely $\mathcal{O}\left( H_{\Theta }\right) $ is proved to be diffeomorphic
to a vector bundle $\mathcal{V}\rightarrow K/K_{\Theta }$ associated to the
principal bundle $K\rightarrow K/K_{\Theta }$, built from a representation
of $K_{\Theta }$. Afterwards $\mathcal{V}\rightarrow K/K_{\Theta }$ is
proved to be isomorphic to $T^{\ast }\mathbb{F}_{\Theta }$. We write down
the expression for $\iota \colon\mathcal{O}\left( H_{\Theta }\right)
\rightarrow T^{\ast }\mathbb{F}_{\Theta }$ in (\ref{fordifeoexplicit}).

\begin{remark}
The equivariance of item (1) above holds only for the action of $K$.
However, there exists also an action of $G$ on the vector bundle, obtained
via the diffeomorphism with $\mathcal{O}\left( H_{\Theta }\right) $. Unlike
the action of $K$, this action is nonlinear since the linear action is not
transitive. We shall revisit the discussion of this action in terms of the
symplectic structure on the cotangent bundle $T^{\ast }\mathbb{F}_{\Theta }$.
\end{remark}

The projection $\pi \colon \mathcal{O}\left( H_{\Theta }\right) \rightarrow 
\mathbb{F}_{\Theta }$ is obtained via the action of $G$. Given the
homogeneous spaces $\mathcal{O}\left( H_{\Theta }\right) =G/Z_{\Theta }$ and 
$\mathbb{F}_{\Theta }=G/P_{\Theta }$, the centraliser $Z_{\Theta }$ is
contained in $P_{\Theta }$. Therefore, we obtain a canonical fibration $%
gZ_{\Theta }\mapsto gP_{\Theta }$ with fibre $P_{\Theta }/Z_{\Theta }$. On
one hand, in terms of the adjoint action the fibre is $\mathrm{Ad}\left(
P_{\Theta }\right) \cdot H_{\Theta }$, whereas on the other hand it is the
affine subspace $H_{\Theta }+\mathfrak{n}_{\Theta }^{+}$, where $\mathfrak{n}%
_{\Theta }^{+}$ is the sum of the eigenspaces of $\mathrm{ad}\left(
H_{\Theta }\right) $ associated to eigenvalues $>0$, that is, 
\begin{equation*}
\mathfrak{n}_{\Theta }^{+}=\sum \mathfrak{g}_{\alpha }
\end{equation*}%
with the sum running over the positive roots $\alpha $ outside $\langle
\Theta \rangle $, that is, with $\alpha \left( H_{\Theta }\right) >0$.
Indeed, if $g\in P_{\Theta }$ then $\mathrm{Ad}\left( g\right) H_{\Theta
}=H_{\Theta }+X$ with $X\in \mathfrak{n}_{\Theta }^{+}$. Moreover if $%
N_{\Theta }=\exp \mathfrak{n}_{\Theta }$ then the map $g\in N_{\Theta
}\mapsto \mathrm{Ad}\left( g\right) H_{\Theta }-H_{\Theta }\in \mathfrak{n}%
_{\Theta }$ is a diffeomorphism.

\begin{example}
The example of $\mathfrak{sl}\left( n\right) $ -- $\mathbb{R}$ or $\mathbb{C}
$ -- is enlightening: $P_{\Theta }$ is the group of matrices that are block
upper triangular. The diagonal part (in blocks) is $Z_{\Theta }$, whereas $%
\mathfrak{n}_{\Theta }^{+}$ is the upper triangular part above the blocks. $%
H_{\Theta }$ is a diagonal matrix that has one scalar matrix in each block.
Thus, conjugation $\mathrm{Ad}\left( g\right) H_{\Theta }=gH_{\Theta }g^{-1}$
keeps $H_{\Theta }$ inside the blocks and adds an upper triangular part
above the blocks, that is, $gH_{\Theta }g^{-1}=H_{\Theta }+X$ for some $X\in 
\mathfrak{n}_{\Theta }^{+}$.
\end{example}

The fibre of $\pi \colon\mathcal{O}\left( H_{\Theta }\right) \rightarrow 
\mathbb{F}_{\Theta }$ is a vector space. This alone does not guaranty the
structure of a vector bundle. Nevertheless, the structure of vector bundle
can be obtained as a bundle associated to the principal bundle $K\rightarrow
K/K_{\Theta }$ with structure group $K_{\Theta }$ (here we ought to endow
the flags with the structure of homogeneous spaces of $K$, not of $G$, since
the action of $G$ on the bundle is not linear).

\subsection{$\mathcal{O}\left( H_{\Theta }\right) \rightarrow \mathbb{F}%
_{\Theta }$ is a vector bundle \label{subsecassocfibr}}

The adjoint representation of $K_{\Theta }$ on $\mathfrak{g}$ leaves
invariant the subspace $\mathfrak{n}_{\Theta }^{+}$ since if $k\in K_{\Theta
}$ then $\mathrm{Ad}\left( k\right) $ commutes with $\mathrm{ad}\left(
H_{\Theta }\right) $, and consequently $\mathrm{Ad}\left( k\right) $ takes
eigenspaces of $\mathrm{ad}\left( H_{\Theta }\right) $ to eigenspaces.
Therefore $\mathrm{Ad}\left( k\right) $ leaves invariant $\mathfrak{n}%
_{\Theta }^{+}$, which is the sum of positive eigenspaces (with eigenvalues $%
>0$). It follows that the restriction of $\mathrm{Ad}$ defines a
representation $\rho $ of $K_{\Theta }$ on $\mathfrak{n}_{\Theta }^{+}$.
This allows us to define the vector bundle $K\times _{\rho }\mathfrak{n}%
_{\Theta }^{+}$ associated to the principal bundle $K\rightarrow K/K_{\Theta
}$. Now, to define a diffeomorphisms between $\mathcal{O}\left( H_{\Theta
}\right) $ and $K\times _{\rho }\mathfrak{n}_{\Theta }^{+}$ we recall that:

\begin{enumerate}
\item The elements of $K\times _{\rho }\mathfrak{n}_{\Theta }^{+}$ are
equivalence classes of pairs $\left( k,X\right) $ of the equivalence
relation $\left( ka,\rho \left( a^{-1}\right) X\right) \sim \left(
k,X\right) $, $k\in K_{\Theta }$. We write the equivalence class of $\left(
k,X\right) \in K\times \mathfrak{n}_{\Theta }^{+}$ as $k\cdot X$. The group $%
K$ acts on $K\times _{\rho }\mathfrak{n}_{\Theta }^{+}$ by left translations.

\item $\mathcal{O}\left( H_{\Theta }\right) =\bigcup_{k\in K}\mathrm{Ad}%
\left( k\right) \left( H_{\Theta }+\mathfrak{n}_{\Theta }^{+}\right) $.
(That is, $\mathcal{O}\left( H_{\Theta }\right) $ is a union of affine
subspaces, analogous to the classical ruled surfaces.) This is a consequence
of the Iwasawa decomposition $G=KAN$. In fact, $AN\subset P_{\Theta }$, so $%
G=KP_{\Theta }$ and it follows that 
\begin{eqnarray*}
\mathrm{Ad}\left( G\right) H_{\Theta } &=&\mathrm{Ad}\left( K\right) \left(
H_{\Theta }+\mathfrak{n}_{\Theta }^{+}\right) \\
&=&\bigcup_{k\in K}\mathrm{Ad}\left( k\right) \left( H_{\Theta }+\mathfrak{n}%
_{\Theta }^{+}\right) .
\end{eqnarray*}
\end{enumerate}

\begin{proposition}
The map $\gamma \colon \mathcal{O}\left( H_{\Theta }\right) \rightarrow
K\times _{\rho }\mathfrak{n}_{\Theta }^{+}$ defined by 
\begin{equation*}
Y=\mathrm{Ad}\left( k\right) \left( H_{\Theta }+X\right) \in \mathcal{O}%
\left( H_{\Theta }\right) \mapsto k\cdot X\in K\times _{\rho }\mathfrak{n}%
_{\Theta }^{+}
\end{equation*}%
is a diffeomorphism satisfying

\begin{enumerate}
\item $\gamma $ is equivariant with respect to the actions of $K$.

\item $\gamma $ maps fibers onto fibers.

\item $\gamma $ maps the orbit $\mathrm{Ad}\left( K\right) H_{\Theta }$ onto
the zero section of $K\times _{\rho }\mathfrak{n}_{\Theta }^{+}$.
\end{enumerate}
\end{proposition}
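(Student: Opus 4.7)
The plan is to verify that $\gamma$ is well defined, exhibit an explicit smooth two-sided inverse, and then read off properties (1)--(3) directly from the formula. The only real obstacle is well-definedness: one must show that the ambiguity in writing $Y\in\mathcal{O}(H_\Theta)$ as $\mathrm{Ad}(k)(H_\Theta+X)$ is exactly the $K_\Theta$-ambiguity built into the associated bundle $K\times_\rho\mathfrak{n}_\Theta^+$.

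Concretely, suppose $Y=\mathrm{Ad}(k)(H_\Theta+X)=\mathrm{Ad}(k')(H_\Theta+X')$ with $k,k'\in K$ and $X,X'\in\mathfrak{n}_\Theta^+$. Using the identification of the fibre of $\pi\colon\mathcal{O}(H_\Theta)\to\mathbb{F}_\Theta$ over $gP_\Theta$ with the affine subspace $\mathrm{Ad}(g)(H_\Theta+\mathfrak{n}_\Theta^+)$, which was established just before the proposition, one concludes that $kP_\Theta=k'P_\Theta$ in $\mathbb{F}_\Theta$, so that $a\ce k^{-1}k'\in P_\Theta\cap K=K_\Theta$. Since $a$ centralises $H_\Theta$, the identity $\mathrm{Ad}(a)(H_\Theta+X')=H_\Theta+X$ collapses to $\rho(a)X'=X$, which means $(k',X')=(ka,\rho(a^{-1})X)\sim(k,X)$; hence $k\cdot X=k'\cdot X'$ in $K\times_\rho\mathfrak{n}_\Theta^+$, as required.

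For the inverse I would set $\delta(k\cdot X)\ce\mathrm{Ad}(k)(H_\Theta+X)$. Well-definedness of $\delta$ is the symmetric computation $\mathrm{Ad}(ka)(H_\Theta+\rho(a^{-1})X)=\mathrm{Ad}(k)\mathrm{Ad}(a)(H_\Theta+\mathrm{Ad}(a^{-1})X)=\mathrm{Ad}(k)(H_\Theta+X)$ for $a\in K_\Theta$, again using that $a$ fixes $H_\Theta$. Tautologically $\delta=\gamma^{-1}$. Smoothness of both maps is then standard: choose a local section $s\colon U\to K$ of the principal bundle $K\to K/K_\Theta$, and on $U$ express $\gamma$ and $\delta$ directly through the smooth adjoint action, which gives smooth formulas on a neighbourhood of every point.

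Properties (1)--(3) follow by inspection. For $k_0\in K$, $\gamma(\mathrm{Ad}(k_0)Y)=\gamma(\mathrm{Ad}(k_0k)(H_\Theta+X))=(k_0k)\cdot X=k_0\cdot\gamma(Y)$, which is (1). The bundle projection $K\times_\rho\mathfrak{n}_\Theta^+\to K/K_\Theta$ sends $k\cdot X$ to $kK_\Theta$, matching $\pi(Y)=kP_\Theta$ under $\mathbb{F}_\Theta=K/K_\Theta=G/P_\Theta$, which is (2). Taking $X=0$ yields $\gamma(\mathrm{Ad}(k)H_\Theta)=k\cdot 0$ for every $k\in K$, and $\delta(k\cdot 0)=\mathrm{Ad}(k)H_\Theta$ gives the converse, proving (3).
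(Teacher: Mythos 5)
Your proof is correct and follows essentially the same route as the paper: well-definedness via the fibration $\pi$ forcing $k^{-1}k'\in K_\Theta$, bijectivity (you package the paper's separate injectivity/surjectivity checks as an explicit two-sided inverse $\delta$), and properties (1)--(3) by direct computation. Your local-section argument for smoothness is a slightly more explicit version of the paper's appeal to the standard manifold structures on the homogeneous space and the associated bundle.
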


\begin{proof}
We check that $\gamma $ is well defined:  if $\mathrm{Ad}%
\left( k\right) \left( H_{\Theta }+X\right) =\mathrm{Ad}\left( k_{1}\right)
\left( H_{\Theta }+X_{1}\right) $ then $\mathrm{Ad}\left( u\right) \left(
H_{\Theta }+X\right) =H_{\Theta }+X_{1}$ where $u=k_{1}^{-1}k$. By
equivariance, it then follows that 
\begin{eqnarray*}
u\cdot b_{\Theta } &=&u\cdot \pi \left( H_{\Theta }+X\right) \\
&=&\pi \left( \mathrm{Ad}\left( u\right) \left( H_{\Theta }+X\right) \right)
\\
&=&\pi \left( H_{\Theta }+X_{1}\right) \\
&=&b_{\Theta }.
\end{eqnarray*}%
Consequently $u\in K_{\Theta }$, therefore $\mathrm{Ad}\left( u\right)
\left( H_{\Theta }+X\right) =H_{\Theta }+\mathrm{Ad}\left( u\right)
X=H_{\Theta }+X_{1}$ with $X_{1}=\mathrm{Ad}\left( u\right) X$. Hence, 
\begin{equation*}
k_{1}\cdot X_{1}=ku^{-1}\cdot \rho \left( u\right) X=k\cdot X
\end{equation*}%
showing that $\gamma $ is well defined. It is surjective because $k\cdot
X=\gamma \left( \mathrm{Ad}\left( k\right) \left( H_{\Theta }+X\right)
\right) $. It is injective since $k_{1}\cdot X_{1}=k\cdot X$ implies $%
k_{1}=ku$ and $X_{1}=\mathrm{Ad}\left( u^{-1}\right) X$, $u\in K_{\Theta }$.
Hence, 
\begin{eqnarray*}
\mathrm{Ad}\left( k_{1}\right) \left( H_{\Theta }+X_{1}\right) &=&\mathrm{Ad}%
\left( k\right) \left( \mathrm{Ad}\left( u\right) H_{\Theta }+\mathrm{Ad}%
\left( u\right) X_{1}\right) \\
&=&\mathrm{Ad}\left( k\right) \left( H_{\Theta }+X\right) .
\end{eqnarray*}

Now, the fibre of $\mathcal{O}\left( H_{\Theta }\right) $ over $k\cdot
b_{\Theta }$ is \ $\mathrm{Ad}\left( k\right) \left( H_{\Theta }+\mathfrak{n}%
_{\Theta }^{+}\right) $, which is taken by $\gamma $ to elements of the type 
$k\cdot X$, that are in the fibre over $k\cdot b_{\Theta }$ of $K\times
_{\rho }\mathfrak{n}_{\Theta }^{+}$. Also, $\gamma \left( \mathrm{Ad}\left(
k\right) \left( H_{\Theta }\right) \right) =k\cdot 0$, which is in \ the
zero section of $K\times _{\rho }\mathfrak{n}_{\Theta }^{+}$. Equivariance
holds because 
\begin{equation*}
\gamma \circ \mathrm{Ad}\left( u\right) \left( \mathrm{Ad}\left( k\right)
\left( H_{\Theta }+X\right) \right) =\gamma \left( \mathrm{Ad}\left(
uk\right) \left( H_{\Theta }+X\right) \right) =uk\cdot X
\end{equation*}%
and the last term is the left action of $u\in K$ on the vector bundle.
Finally the diffeomorphism property follows from the manifold constructions
of $\mathcal{O}\left( H_{\Theta }\right) $ (as a homogeneous space) and $%
K\times _{\rho }\mathfrak{n}_{\Theta }^{+}$ (as an associated bundle).
\end{proof}

From the diffeomorphism $\gamma $ we endow $\mathcal{O}\left( H_{\Theta
}\right) $ with the structure of a vector bundle coming from $K\times _{\rho
}\mathfrak{n}_{\Theta }^{+}$. Its fibers are the affine subspaces $\mathrm{Ad%
}\left( k\right) \left( H_{\Theta }+\mathfrak{n}_{\Theta }^{+}\right) $ that
have vector space structure via the bijection with $\mathrm{Ad}\left(
k\right) \left( \mathfrak{n}_{\Theta }^{+}\right) $.

\subsection{Isomorphism with $T^{\ast }\mathbb{F}_{\Theta }$}

Firstly, let $L$ be a Lie group and $M\subset L$ be a closed subgroup, and
denote by $\iota \colon M\rightarrow \mathrm{Gl}\left( T_{x_{0}}\left(
L/M\right) \right) $ the isotropy representation of $M$ on the tangent space
of $L/M$ at the origin $x_{0}$. Then, the tangent bundle $T\left( L/M\right) 
$ is isomorphic to the vector bundle $L\times _{\iota }T_{x_{0}}\left(
L/M\right) $, associated to the principal bundle $L\rightarrow L/M$ via the
representation $\iota $. Similarly, if $\iota ^{\ast }$ is the dual
representation, then $T^{\ast }\left( L/M\right) $ is isomorphic to the
vector bundle $L\times _{\iota ^{\ast }}\left( T_{x_{0}}\left( L/M\right)
\right) ^{\ast }$. Secondly, observe that if $Q\times _{\rho _{1}}V$ and $%
Q\times _{\rho _{2}}W$ are two vector bundles associated to the principal
bundle $Q\rightarrow X$, via the representations $\rho _{1}$ and $\rho _{2}$
then $Q\times _{\rho _{1}}V$ is isomorphic to $Q\times _{\rho _{2}}W$ when $%
\rho _{1}$ and $\rho _{2}$ are equivalent representations.

Reaiming our focus to the flag $\mathbb{F}_{\Theta }$, note that the tangent
space to the origin $T_{b_{\Theta }}\mathbb{F}_{\Theta }$ is identified with 
$\mathfrak{n}_{\Theta }^{-}$, which is the subspace formed by the sum of
eigenspaces of $\mathrm{ad}\left( H_{\Theta }\right) $ associated to
negative eigenvalues, that is, 
\begin{equation*}
\mathfrak{n}_{\Theta }^{-}=\sum_{\alpha \left( H_{\Theta }\right) <0}%
\mathfrak{g}_{\alpha }.
\end{equation*}%
Via this identification, the isotropy representation becomes the restriction
of the adjoint representation.

The subspace $\mathfrak{n}_{\Theta }^{+}$ is isomorphic to the dual $\left( 
\mathfrak{n}_{\Theta }^{-}\right) ^{\ast }$ of $\mathfrak{n}_{\Theta }^{-}$
via the Cartan-Killing form $\langle \cdot ,\cdot \rangle $ of $\mathfrak{g}$%
. This means that the map 
\begin{equation*}
X\in \mathfrak{n}_{\Theta }^{+}\mapsto \langle X,\cdot \rangle \in \left( 
\mathfrak{n}_{\Theta }^{-}\right) ^{\ast }
\end{equation*}%
is an isomorphism. Once more, via this isomorphism the dual of the isotropy
representation becomes the representation $\rho $ given by restriction of
the adjoint representation.

Therefore, $T^{\ast }\mathbb{F}_{\Theta }=T^{\ast }\left( K/K_{\Theta
}\right) $ is isomorphic to $K\times _{\rho }\mathfrak{n}_{\Theta }^{+}$,
which in turn is diffeomorphic to the adjoint orbit $\mathcal{O}\left(
H_{\Theta }\right) $. Both diffeomorphisms permute the action of $K$. This
finishes the proof of the first part of Theorem \ref{teodifeocotan}, as well
as of item (1). Thus, the diffeomorphism $\iota \colon \mathcal{O}\left(
H_{\Theta }\right) \rightarrow T^{\ast }\mathbb{F}_{\Theta }$ is obtained by
composing $\gamma \colon \mathcal{O}\left( H_{\Theta }\right) \rightarrow
K\times _{\rho }\mathfrak{n}_{\Theta }^{+}$ with the vector bundle
isomorphism between $K\times _{\rho }\mathfrak{n}_{\Theta }^{+}$ and $%
T^{\ast }\mathbb{F}_{\Theta }$. It is explicitly given by 
\begin{equation}
\iota \colon \mathrm{Ad}\left( k\right) \left( H_{\Theta }+X\right) \in 
\mathcal{O}\left( H_{\Theta }\right) \mapsto \langle \mathrm{Ad}\left(
k\right) X,\cdot \rangle \in T_{kb_{\Theta }}^{\ast }\mathbb{F}_{\Theta }
\label{fordifeoexplicit}
\end{equation}%
where $X\in \mathfrak{n}_{\Theta }^{+}$ and $T_{kb_{\Theta }}\mathbb{F}%
_{\Theta }$ is identified with $\mathrm{Ad}\left( k\right) \mathfrak{n}%
_{\Theta }^{-}$.

Item (2) of Theorem \ref{teodifeocotan} will be a consequence of Proposition %
\ref{propmomento} below.

\subsection{The action of $G$ on $T^{\ast } \mathbb{F}_{\Theta }$}

The diffeomorphism $\iota \colon \mathcal{O}\left( H_{\Theta }\right)
\rightarrow T^{\ast }\mathbb{F}_{\Theta }$ induces an action of $G$ on $%
T^{\ast }\mathbb{F}_{\Theta }$ by $g\alpha =\iota \circ \mathrm{Ad}\left(
g\right) \circ \iota ^{-1}\left( \alpha \right) $, $g\in G$, $\alpha \in
T^{\ast }\mathbb{F}_{\Theta }$. The action of $K$ is linear since it is
given by the lifting of the linear action on $\mathbb{F}_{\Theta }$.
However, the action of $G$ is not linear because the linear action on $%
T^{\ast }\mathbb{F}_{\Theta }$ is not transitive (the zero section is
invariant). It is therefore natural to ask how does the action of $G$ behave
in terms of the geometry of $T^{\ast }\mathbb{F}_{\Theta }$. The description
of this action will be made via an infinitesimal action o the Lie algebra $%
\mathfrak{g}$ of $G$, that is, through a homomorphism $\theta \colon 
\mathfrak{g}\rightarrow \Gamma \left( T^{\ast }\mathbb{F}_{\Theta }\right) $%
, which associates to each element of the Lie algebra $\mathfrak{g}$ a
Hamiltonian vector field on $T^{\ast }\mathbb{F}_{\Theta }$.

Let $\Omega $ be the canonical symplectic form on $T^{\ast }\mathbb{F}%
_{\Theta }$. Given a vector field $X$ on $\mathbb{F}_{\Theta }$ denote by $%
X^{\#}$ the lifting of $X$ to $T^{\ast }\mathbb{F}_{\Theta }$. The flow of $%
X^{\#}$ is linear and is defined by $\alpha \in T_{x}^{\ast }\mathbb{F}%
_{\Theta }\mapsto \alpha \circ \left( d\phi _{-t}\right) _{\phi _{t}\left(
x\right) }$ where $\phi _{t}$ is the flow of $X$. The lifting satisfies:

\begin{enumerate}
\item $\pi _{\ast }X^{\#}=X$, where $\pi \colon T^{\ast }\mathbb{F}_{\Theta
}\mapsto \mathbb{F}_{\Theta }$ is the projection.

\item $X^{\#}$ is the Hamiltonian vector field with respect to $\Omega $ for
the function $h_{X}\left( \xi \right) =\xi \left( X\left( x\right) \right) $%
, $\xi \in T_{x}^{\ast }\mathbb{F}_{\Theta }$.

\item If $X$ and $Y$ are vector fields, then $[X,Y]^{\#}=[X^{\#},Y^{\#}]$,
that is, $X\mapsto X^{\#}$ is a homomorphism of Lie algebras.
\end{enumerate}

Now, for $Y\in \mathfrak{g}$ we denote the vector field on $\mathbb{F}%
_{\Theta }$ whose flow is $\exp tY$ by $\widetilde{Y}$ or simply by $Y$ if
there is no confusion.

Since the action of $K$ in $T^{\ast }\mathbb{F}_{\Theta }$ is linear, it
follows that the vector field induced by $A\in \mathfrak{k}$ on $T^{\ast }%
\mathbb{F}_{\Theta }$ is $X^{\#}$, that is, $\theta \left( X\right) =X^{\#}$
if $X\in \mathfrak{k}$. Using the Cartan decomposition $\mathfrak{g}=%
\mathfrak{k}\oplus \mathfrak{s}$, it remains to describe $\theta \left(
X\right) $ when $X\in \mathfrak{s}$. This is done modifying the vector field 
$X^{\#}$ by a vertical one so that the new vector field still projects on $X$%
.

The following lemma is well known. We include it here for the sake of
completeness.

\begin{lemma}
\label{lemvert}Let $M$ be a manifold and $f\colon M\rightarrow \mathbb{R}$.
Define $F\colon T^{\ast }M\rightarrow \mathbb{R}$ by $F=f\circ \pi $ ($\pi
\colon T^{\ast }M\rightarrow M$ is the projection). Let $V_{F}$ be the
Hamiltonian vector field of $F$ with respect to $\Omega $. Then, $V_{F}$ is
vertical ($\pi _{\ast }V_{F}=0$). Then $V_{F}$ is the constant parallel
vector field whose restriction to a fiber $T_{x}^{\ast }M$ is $-df_{x}\in
T_{x}^{\ast }M$.
\end{lemma}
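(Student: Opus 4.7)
The plan is to verify both assertions by a direct computation in Darboux coordinates, using the tautological $1$-form to make the canonical identifications transparent. Since $F = f \circ \pi$, first I observe that $dF = \pi^{\ast} df$, so that at a point $\alpha \in T^{\ast}_{x}M$ the differential $dF_{\alpha}$ annihilates every vertical vector. This will immediately force $V_{F}$ to be vertical once the non-degeneracy of $\Omega$ is invoked.

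Next, I would pick local coordinates $(q_{i})$ on $M$ and the induced Darboux coordinates $(q_{i},p_{i})$ on $T^{\ast}M$, so that the tautological $1$-form is $\lambda = \sum p_{i}\,dq_{i}$ and $\Omega = -d\lambda = \sum dq_{i}\wedge dp_{i}$. In these coordinates $F(q,p) = f(q)$, hence $dF = \sum \frac{\partial f}{\partial q_{i}}\,dq_{i}$. Writing
\begin{equation*}
V_{F} \;=\; \sum_{i} a_{i}\,\frac{\partial}{\partial q_{i}} \;+\; \sum_{i} b_{i}\,\frac{\partial}{\partial p_{i}},
\end{equation*}
a one-line computation gives $\iota_{V_{F}}\Omega = \sum a_{i}\,dp_{i} - \sum b_{i}\,dq_{i}$. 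Matching this against $dF$ (with the paper's Hamiltonian convention) yields $a_{i}=0$ and $b_{i} = -\partial f/\partial q_{i}$. The first conclusion $\pi_{\ast}V_{F}=0$ then follows from $a_{i}=0$, while the explicit formula $V_{F} = -\sum (\partial f/\partial q_{i})\,\partial/\partial p_{i}$ shows that the coefficients depend only on $x$, not on the fiber coordinate $p$, so $V_{F}$ is constant along each fiber, i.e.\ parallel.

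Finally I would translate this intrinsically: under the canonical identification of the vertical tangent space of $T^{\ast}M$ at $\alpha\in T^{\ast}_{x}M$ with $T^{\ast}_{x}M$ itself (obtained from the vector space structure of the fiber, which in coordinates identifies $\partial/\partial p_{i}$ with $dq_{i}\lvert_{x}$), the computed expression becomes exactly $-df_{x}$, as claimed.

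The argument is entirely routine; the only mild subtlety is keeping the sign convention for $\Omega$ and for the Hamiltonian vector field consistent with those in the rest of the paper, and making the vertical-vectors-versus-cotangent-vectors identification explicit enough that the equality $V_{F}\lvert_{T^{\ast}_{x}M} = -df_{x}$ reads as a genuine coordinate-free statement rather than a coordinate artifact.
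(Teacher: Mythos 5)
Your proposal is correct and follows essentially the same route as the paper: a computation in Darboux coordinates showing that the Hamiltonian vector field of $F=f\circ \pi$ has vanishing $\partial /\partial q_{i}$ components and coefficients $-\partial f/\partial q_{i}$ on the $\partial /\partial p_{i}$, hence is vertical and fiberwise constant equal to $-df_{x}$. Your added care with the sign convention for $\Omega$ and the explicit identification of the vertical space with $T_{x}^{\ast }M$ only makes the same argument more precise.
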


\begin{proof}
A straightforward way to see this is to use local coordinates $q,p$ of $M$
and the fibre, respectively. Then, the Hamiltonian vector field is 
\begin{equation*}
V_{F}=\sum_{i}\frac{\partial F}{\partial p_{i}}\frac{\partial }{\partial
q_{i}}-\frac{\partial F}{\partial q_{i}}\frac{\partial }{\partial p_{i}}.
\end{equation*}%
Since the function $F$ does not depend on $p$, only the second term remains,
showing that the vector field is vertical. If $x=\left( q_{1,}\ldots
,q_{n}\right) \in M$ is fixed then the second term becomes 
\begin{equation*}
\sum_{i}-\frac{\partial F}{\partial q_{i}}\frac{\partial }{\partial p_{i}}%
=-df_{x}
\end{equation*}%
since $\partial F/\partial q_{i}=\partial f/\partial q_{i}$.
\end{proof}

We return to $\mathbb{F}_{\Theta }$ which coincides with the adjoint orbit $%
\mathrm{Ad}\left( K\right) \cdot H_{\Theta }\subset \mathfrak{s}$. Given $%
X\in \mathfrak{s}$, we can define the height function 
\begin{equation*}
f_{X}\left( x\right) =\langle x,X\rangle
\end{equation*}%
where $\langle \cdot ,\cdot \rangle $ is the Cartan-Killing form, which is
an inner product when restricted to $\mathfrak{s}$.

Now a choose a $K$-invariant Riemannian metric $\left( \cdot ,\cdot \right)
_{B}$ on $\mathbb{F}_{\Theta }$. The \ most convenient for our purposes is
the so called Borel metric which has the property that for any $X\in 
\mathfrak{s}$ the gradient of $f_{X}$ is exactly the vector field $X$
induced by $X$ (see Duistermat-Kolk-Varadarajan \cite{dkv}).

For $X\in \mathfrak{s}$ set $F_{X}=f_{X}\circ \pi $ and denote by $V_{X}$
its Hamiltonian vector field on $T^{\ast }\mathbb{F}_{\Theta }$. By the
lemma \ref{lemvert} $V_{X}$ is vertical.

The following lemma will be used to evaluate the symplectic form on the
several Hamiltonian vector fields defined above.

\begin{lemma}
We have the following directional derivatives:

\begin{enumerate}
\item If $A\in \mathfrak{k}$ and $X\in \mathfrak{s}$ then $%
A^{\#}F_{X}=F_{[A,X]}$.

\item If $X,Y\in \mathfrak{s}$ then $X^{\#}F_{Y}=Y^{\#}F_{X}$.

\item If $X,Y\in \mathfrak{s}$ then $V_{X}F_{Y}=0$.
\end{enumerate}
\end{lemma}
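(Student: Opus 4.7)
The plan is to reduce each identity to a directional derivative computation on $\mathbb{F}_\Theta$ by using that $F_X = f_X \circ \pi$ is the pullback of a function from the base. Since the cotangent lift $W^{\#}$ of any vector field $W$ on $\mathbb{F}_\Theta$ satisfies $\pi_\ast W^{\#} = W$ (property (1) of the lifting listed just before the lemma), one has the general identity
\begin{equation*}
W^{\#} F_X \;=\; (W f_X) \circ \pi,
\end{equation*}
so parts (1) and (2) reduce to evaluating $\widetilde{Y}(f_Z)$ on $\mathbb{F}_\Theta$ for the appropriate $Y,Z$, while part (3) reduces to exploiting the verticality of $V_X$ established in Lemma \ref{lemvert}.

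For item (1), take $W = \widetilde{A}$. Since $A \in \mathfrak{k}$ and $K$ acts on $\mathbb{F}_\Theta = \mathrm{Ad}(K) H_\Theta \subset \mathfrak{s}$ by the adjoint representation, the fundamental vector field is $\widetilde{A}(x) = [A,x]$. Because $f_X$ is linear with $df_X(v) = \langle v, X\rangle$, one obtains
\begin{equation*}
(\widetilde{A}\, f_X)(x) \;=\; \langle [A,x], X\rangle,
\end{equation*}
and applying $\mathrm{ad}$-invariance of the Cartan--Killing form moves the bracket onto $X$, producing $\pm\langle x, [A,X]\rangle = \pm f_{[A,X]}(x)$ (the sign being fixed by the conventions of the paper), whence $A^{\#} F_X = F_{[A,X]}$.

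For item (2) the same reduction yields $X^{\#} F_Y = (\widetilde{X} f_Y) \circ \pi$, but now $\widetilde{X}$ for $X \in \mathfrak{s}$ is \emph{not} given by $[X,\cdot]$, because the action of $\exp(tX)$ on $G/P_\Theta$ ceases to be adjoint under the identification with $\mathrm{Ad}(K)H_\Theta$. The key tool is the Borel metric $(\cdot,\cdot)_B$ introduced just above the lemma, which by the cited Duistermaat--Kolk--Varadarajan property satisfies $\mathrm{grad}_B f_X = \widetilde{X}$. Then
\begin{equation*}
\widetilde{X}(f_Y) \;=\; df_Y(\widetilde{X}) \;=\; (\mathrm{grad}_B f_Y,\, \widetilde{X})_B \;=\; (\widetilde{Y}, \widetilde{X})_B,
\end{equation*}
and symmetry of the metric gives $\widetilde{X}(f_Y) = \widetilde{Y}(f_X)$, hence $X^{\#} F_Y = Y^{\#} F_X$. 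Item (3) is then essentially bookkeeping: by Lemma \ref{lemvert}, $V_X$ is vertical, so $\pi_\ast V_X = 0$, and since $F_Y = f_Y \circ \pi$ depends only on the base point,
\begin{equation*}
V_X F_Y \;=\; dF_Y(V_X) \;=\; df_Y\bigl(\pi_\ast V_X\bigr) \;=\; 0.
\end{equation*}

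The substantive step is (2): one cannot compute $\widetilde{X} f_Y$ directly from the adjoint action when $X \in \mathfrak{s}$, so the Borel-metric identification $\widetilde{X} = \mathrm{grad}_B f_X$ is indispensable. Once this gradient formula is granted, symmetry of the Riemannian metric delivers the result at no extra cost, and (1) and (3) reduce to routine applications of ad-invariance and verticality respectively.
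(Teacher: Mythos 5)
Your proposal is correct and follows essentially the same route as the paper: reduce (1) and (2) to base computations via $\pi_\ast W^{\#}=W$, use ad-invariance of the Cartan--Killing form for (1), the Borel-metric gradient identity $\widetilde{X}=\grad_B f_X$ plus symmetry of the metric for (2), and verticality of $V_X$ for (3). The only point you leave implicit is the sign in (1), which the paper pins down in a following remark by using right-invariant vector fields, so that $\widetilde{A}(x)=-[A,x]$ and the two sign changes cancel to give exactly $F_{[A,X]}$.
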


\begin{proof}
Since $\pi _{\ast }A^{\#}=A$ it follows that $A^{\#}F_{X}\left( \alpha
\right) =Af_{X}\left( x\right) $, $x=\pi \left( \alpha \right) $. But, 
\begin{eqnarray}
Af_{X}\left( x\right) &=&\frac{d}{dt}_{\left\vert t=0\right. }\langle 
\mathrm{Ad}\left( e^{tA}\right) x,X\rangle =\langle -[A,x],X\rangle
\label{forderivadaadjunta} \\
&=&\langle x,[A,X]\rangle  \notag \\
&=&f_{[A,X]}\left( x\right)  \notag
\end{eqnarray}%
showing the first equality.

Using again $\pi _{\ast }X^{\#}=X$ we get $X^{\#}F_{Y}\left( \alpha \right)
=Xf_{Y}\left( x\right) $, $x=\pi \left( \alpha \right) $. But, $Xf_{Y}\left(
x\right) =\left( X\left( x\right) ,Y\left( x\right) \right) _{B}$, since $Y$
is the gradient of $f_{Y}$ with respect to $\left( \cdot ,\cdot \right) _{B}$%
. By symmetry of the metric, $\left( X\left( x\right) ,Y\left( x\right)
\right) _{B}=Yf_{X}\left( x\right) $, proving the equality of (2). Finally, $%
F_{Y}$ is constant on the fibers and $V_{X}$ is vertical hence (3) follows.
\end{proof}

\begin{remark}
In the computation of the partial derivative of item (1) above we used the
fact that the Lie algebra of $G$ is formed by \textit{right} invariant
fields. For the bracket $\left[ \cdot ,\cdot \right] $ in $\mathfrak{g}$
formed by the right invariant vector fields the following equality holds $%
\mathrm{Ad}\left( e^{A}\right) =e^{-\mathrm{ad}\left( A\right) }$. Hence the
first equality of (\ref{forderivadaadjunta}). The reason to use right
invariant vector fields is so that we can project onto homogeneous spaces.
\end{remark}

Now we can compute the Lie brackets between the Hamiltonian vector fields.

\begin{corollary}
\label{corbrackets}We have the following Lie brackets:

\begin{enumerate}
\item If $A\in \mathfrak{k}$ and $X\in \mathfrak{s}$ then $%
[A^{\#},V_{X}]=V_{[A,X]}$.

\item If $X,Y\in \mathfrak{s}$ then $[X^{\#},V_{Y}]=[Y^{\#},V_{X}]$.

\item If $X,Y\in \mathfrak{s}$ then $[V_{X},V_{Y}]=0$.
\end{enumerate}
\end{corollary}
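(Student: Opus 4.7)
The plan is to invoke the standard identity that, on a symplectic manifold, the Lie bracket of two Hamiltonian vector fields is again Hamiltonian, with Hamiltonian given by the Poisson bracket of the generators: if $V_F$ denotes the Hamiltonian vector field of $F$ with respect to $\Omega$, then
\[
[V_F, V_G] \;=\; V_{\{F,G\}} \;=\; V_{V_F G},
\]
under the convention $\{F,G\} = V_F G$. Each of the vector fields $A^{\#}$, $X^{\#}$, $Y^{\#}$, $V_X$, $V_Y$ appearing in the statement is, by construction, Hamiltonian with a known Hamiltonian function ($h_A$, $h_X$, $h_Y$, $F_X$, $F_Y$, respectively), so every bracket below becomes the Hamiltonian vector field of a directional derivative already computed in the preceding lemma.

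For item (1), I would write
\[
[A^{\#}, V_X] \;=\; V_{A^{\#} F_X}
\]
and apply item (1) of the lemma to get $A^{\#} F_X = F_{[A,X]}$. Since $[\mathfrak{k},\mathfrak{s}] \subset \mathfrak{s}$, the element $[A,X]$ lies in $\mathfrak{s}$, so $V_{F_{[A,X]}}$ is exactly $V_{[A,X]}$ by definition of the notation $V_Y$ for $Y \in \mathfrak{s}$. For item (2), the same identity gives
\[
[X^{\#}, V_Y] = V_{X^{\#} F_Y}, \qquad [Y^{\#}, V_X] = V_{Y^{\#} F_X},
\]
and item (2) of the lemma, $X^{\#} F_Y = Y^{\#} F_X$, yields equality of the two Hamiltonians and hence of the Hamiltonian vector fields. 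For item (3),
\[
[V_X, V_Y] \;=\; V_{V_X F_Y} \;=\; V_0 \;=\; 0
\]
by item (3) of the lemma.

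The only potential obstacle is bookkeeping of the sign convention in the identity $[V_F, V_G] = V_{V_F G}$: a different sign convention for $\Omega$ or for the Poisson bracket would flip a sign in items (1) and (2), but would leave the stated identities unchanged because the antisymmetry already present in (2) absorbs the sign, and in (1) the bracket $[A,X]$ flips sign consistently with the Hamiltonian side. Hence no genuine difficulty arises, and the corollary is a direct three-line consequence of the preceding lemma.
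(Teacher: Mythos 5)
Your proof is correct and follows exactly the paper's own argument: both invoke the identity that the bracket of Hamiltonian vector fields is the Hamiltonian vector field of the directional derivative of one energy function along the other, and then substitute the three directional derivatives computed in the preceding lemma. The extra remark that $[\mathfrak{k},\mathfrak{s}]\subset\mathfrak{s}$ (so that $V_{[A,X]}$ is well defined) is a welcome detail the paper leaves implicit.
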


\begin{proof}
In fact, all vector fields involved are Hamiltonian. In general, on a
symplectic manifold if $Z$ and $W$ are the Hamiltonian vector fields of the
energy functions $u$ and $v$ then $[Z,W]$ is the Hamiltonian vector field of
the function $Zv$ (see \cite{abmars}, 3.3 -- Proposition 3.3.12 together
with Corollary 3.3.18). Combining this with the fact that $X\mapsto X^{\#}$
is a Lie algebra homomorphism, that is, $[X^{\#},Y^{\#}]=[X,Y]^{\#}$, we
obtain:

\begin{enumerate}
\item $[A^{\#},V_{X}]$ is the Hamiltonian vector field of the function $%
A^{\#}F_{X}=F_{\left[ A,X\right] }$, that is, $[A^{\#},V_{X}]=V_{[A,X]}$.

\item $[X^{\#},V_{Y}]$ is the Hamiltonian vector field of the function $%
X^{\#}F_{Y}=Y^{\#}F_{X}$. From which item (2) follows.

\item $[V_{X},V_{Y}]$ is the Hamiltonian vector field of the function $%
V_{X}F_{Y}=0$.
\end{enumerate}\end{proof}

\begin{corollary}
The map $\theta $ defined on $\mathfrak{g}$ and taking values on vector
fields of $T^{\ast }\mathbb{F}_{\Theta }$ defined by $\theta \left( A\right)
=A^{\#}$ if $A\in \mathfrak{k}$ and $\theta \left( X\right) =X^{\#}+V_{X}$
is a homomorphism of Lie algebras.
\end{corollary}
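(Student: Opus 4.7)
The plan is to verify the homomorphism identity $\theta([Z,W]) = [\theta(Z),\theta(W)]$ on pairs $(Z,W) \in \mathfrak{g} \times \mathfrak{g}$ by splitting into the three cases dictated by the Cartan decomposition $\mathfrak{g} = \mathfrak{k} \oplus \mathfrak{s}$, and in each case reducing the computation to the bracket identities already established in Corollary \ref{corbrackets} together with the fact that $X \mapsto X^{\#}$ is itself a Lie algebra homomorphism.

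First I would handle the $\mathfrak{k}$-$\mathfrak{k}$ case. For $A, B \in \mathfrak{k}$ the bracket $[A,B]$ lies again in $\mathfrak{k}$, so the definition of $\theta$ reduces the required identity to $[A^{\#}, B^{\#}] = [A,B]^{\#}$, which is property (3) of the lifting $X \mapsto X^{\#}$ recalled before Lemma \ref{lemvert}. Next I would treat the mixed case, $A \in \mathfrak{k}$ and $X \in \mathfrak{s}$. Here $[A,X] \in \mathfrak{s}$, so $\theta([A,X]) = [A,X]^{\#} + V_{[A,X]}$; on the other hand, expanding $[\theta(A), \theta(X)] = [A^{\#}, X^{\#} + V_{X}]$ by bilinearity gives $[A^{\#}, X^{\#}] + [A^{\#}, V_{X}]$, and these two summands match the desired expression by the lifting homomorphism property together with item (1) of Corollary \ref{corbrackets}.

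Finally I would treat the $\mathfrak{s}$-$\mathfrak{s}$ case, which is the only one requiring a cancellation. For $X, Y \in \mathfrak{s}$ the bracket $[X,Y]$ lies in $\mathfrak{k}$, so $\theta([X,Y]) = [X,Y]^{\#}$. Expanding
\begin{equation*}
[\theta(X), \theta(Y)] = [X^{\#}, Y^{\#}] + [X^{\#}, V_{Y}] + [V_{X}, Y^{\#}] + [V_{X}, V_{Y}],
\end{equation*}
the first term equals $[X,Y]^{\#}$ by the lifting homomorphism, the last term vanishes by item (3) of Corollary \ref{corbrackets}, and the two middle terms cancel: they differ by a sign after using antisymmetry, and item (2) of Corollary \ref{corbrackets} states $[X^{\#}, V_{Y}] = [Y^{\#}, V_{X}]$, so their sum is zero. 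Hence $[\theta(X), \theta(Y)] = [X,Y]^{\#} = \theta([X,Y])$.

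No real obstacle is expected: all the ingredients have been packaged into Corollary \ref{corbrackets} and the known properties of the lifting $X \mapsto X^{\#}$. The only place where one needs to pay attention is the $\mathfrak{s}$-$\mathfrak{s}$ case, where the symmetric identity $[X^{\#}, V_{Y}] = [Y^{\#}, V_{X}]$ is exactly what makes the two unwanted cross terms cancel rather than accumulate, so the delicate point is simply to invoke item (2) of the corollary at the right moment.
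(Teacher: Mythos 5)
Your proposal is correct and is precisely the argument the paper intends: the paper's proof simply says the result ``follows directly from the brackets computed in Corollary \ref{corbrackets},'' and your three-case verification (using $[\mathfrak{k},\mathfrak{k}]\subset\mathfrak{k}$, $[\mathfrak{k},\mathfrak{s}]\subset\mathfrak{s}$, $[\mathfrak{s},\mathfrak{s}]\subset\mathfrak{k}$ together with items (1)--(3) of that corollary and the homomorphism property of the lifting) is exactly the omitted computation. In particular you correctly identified that item (2) is what makes the cross terms cancel in the $\mathfrak{s}$-$\mathfrak{s}$ case.
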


\begin{proof}
This follows directly from the brackets computed in Corollary \ref%
{corbrackets}.
\end{proof}

In other words, $\theta $ is an infinitesimal action of $\mathfrak{g}$ on $%
T^{\ast }\mathbb{F}_{\Theta }$. By a classical result of Palais this action
is integrated to an action of a connected Lie group $G$, whose Lie algebra
is $\mathfrak{g}$, provided the vector fields are complete.

\begin{lemma}
The vector fields $\theta \left( Z\right) $, $Z\in \mathfrak{g}$ are
complete.
\end{lemma}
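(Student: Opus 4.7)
The plan is to exploit that by linearity of $\theta$ and of the cotangent lift, for every $Z = A + X \in \mathfrak{g}$ with $A\in\mathfrak{k}$ and $X\in\mathfrak{s}$ we have
\[
\theta(Z) = Z^{\#} + V_X,
\]
where $Z^{\#}$ denotes the cotangent lift to $T^{\ast}\mathbb{F}_{\Theta}$ of the vector field $\widetilde{Z}$ induced by $Z$ on $\mathbb{F}_{\Theta}$ via the $G$-action. Since $\mathbb{F}_{\Theta}$ is compact, the flow $\phi_t$ of $\widetilde{Z}$ is defined for all $t\in\mathbb{R}$, so $Z^{\#}$ alone has the globally defined flow $\Phi_t(\alpha) = \alpha \circ (d\phi_{-t})_{\phi_t(\pi(\alpha))}$. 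The task reduces to showing that adding the vertical perturbation $V_X$ does not destroy completeness.

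To this end I would introduce the ``frozen'' covector $\beta(t) := \alpha(t) \circ (d\phi_t)_{x_0} \in T^{\ast}_{x_0}\mathbb{F}_{\Theta}$ along an integral curve $\alpha(t)$ of $\theta(Z)$ starting at $\alpha_0 \in T^{\ast}_{x_0}\mathbb{F}_{\Theta}$; the projection identity $\pi(\alpha(t)) = \phi_t(x_0)$ is automatic because $V_X$ is vertical by Lemma~\ref{lemvert}. By a standard variation-of-parameters computation (the pure $Z^{\#}$-flow keeps $\beta$ constant, so only $V_X$ contributes to $\dot\beta$), combined with the explicit fiber description of $V_X$ from Lemma~\ref{lemvert} -- namely, that the vertical vector $V_X$ at $\alpha \in T^{\ast}_y\mathbb{F}_{\Theta}$ corresponds to the covector $-df_X(y) \in T^{\ast}_y\mathbb{F}_{\Theta}$ -- one obtains
\[
\beta(t) = \alpha_0 - \int_0^t df_X(\phi_s(x_0)) \circ (d\phi_s)_{x_0}\, ds.
\]

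The integrand depends smoothly on $s$, and the trajectory $\phi_s(x_0)$ stays inside the compact manifold $\mathbb{F}_{\Theta}$, so the integrand is bounded on every finite interval; hence the integral converges for all $t \in \mathbb{R}$ and defines $\beta(t)$ globally. Inverting via $\alpha(t) = \beta(t) \circ (d\phi_{-t})_{\phi_t(x_0)}$ then yields a globally defined integral curve of $\theta(Z)$, proving completeness. The main obstacle is precisely the variation-of-parameters step -- verifying that the $Z^{\#}$-contribution cancels in the equation for $\beta$ so that only integration of the vertical term survives -- after which compactness of $\mathbb{F}_{\Theta}$ immediately controls the growth of $\beta$ and thus of $\alpha$.
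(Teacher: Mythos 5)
Your proof is correct, and it rests on the same structural decomposition as the paper's: $\theta(Z)=(A+X)^{\#}+V_X$ is a complete cotangent lift (complete because $\mathbb{F}_{\Theta}$ is compact, so the base flow $\phi_t$ is global) plus a vertical perturbation which, by Lemma \ref{lemvert}, depends only on the base point. Where you differ is in execution. The paper argues by contradiction: it assumes a maximal trajectory with finite right endpoint $b$, uses compactness of the base to get a limit point $x(b)$, passes to a local trivialization near $x(b)$, and invokes the fact that solutions of an affine ODE $\dot y = A(t)y+c(t)$ with continuous coefficients extend past $b$. You instead work globally and constructively: pulling the covector back along the unperturbed lift flow, $\beta(t)=\alpha(t)\circ(d\phi_t)_{x_0}$, kills the $Z^{\#}$ contribution, and because $V_X$ restricted to the fiber over $y$ is the \emph{constant} vector $-df_X(y)$ (not merely affine in the fiber variable), the equation for $\beta$ reduces to a quadrature, giving the closed formula $\beta(t)=\alpha_0-\int_0^t df_X(\phi_s(x_0))\circ(d\phi_s)_{x_0}\,ds$, defined for all $t$ since the integrand is continuous in $s$. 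Your version buys an explicit global solution and avoids both the contradiction and the local trivialization; the paper's version is more robust in that it would still work if the vertical term were genuinely linear in the fiber variable rather than constant. One cosmetic remark: the boundedness of the integrand on finite intervals follows from continuity in $s$ alone; compactness of $\mathbb{F}_{\Theta}$ is needed only to guarantee that $\phi_s$ (and hence the integrand) is defined for all $s$, since $(d\phi_s)_{x_0}$ need not stay bounded as $s\to\infty$.
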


\begin{proof}
Take $Z=A+X$ with $A\in \mathfrak{k}$ and $X\in \mathfrak{s}$ so that $%
\theta \left( Z\right) =A^{\#}+X^{\#}+V_{X}=\left( A+X\right) ^{\#}+V_{X}$.
Suppose by contradiction that there exists a maximal trajectory $z\left(
t\right) $ of $Z$ defined in a proper interval $\left( a,b\right) \subset 
\mathbb{R}$, with e.g. $b<\infty $. This implies that $\lim_{t\rightarrow
b}z\left( t\right) =\infty $. Let $x\left( t\right) $ be the projection of $%
z\left( t\right) $ onto $\mathbb{F}_{\Theta }$. Then $x\left( t\right) $ is
a trajectory of the vector field $\widetilde{A+X}$ on $\mathbb{F}_{\Theta }$
induced by $A+X$. Since $\widetilde{A+X}$ is complete (by compactness of $%
\mathbb{F}_{\Theta }$) there exists $\lim_{t\rightarrow b}x\left( t\right)
=x\left( b\right) $.

In a local trivialization $T^{\ast }\mathbb{F}_{\Theta }\approx U\times 
\mathbb{R}^{n}$ around $x\left( b\right) $ we have $z\left( t\right) =\left(
x\left( t\right) ,y\left( t\right) \right) $. The second component $y\left(
t\right) $ satisfies a linear equation 
\begin{equation*}
\dot{y}=A\left( t\right) y+c\left( t\right)
\end{equation*}%
where $A\left( t\right) $ is the derivative at $x\left( t\right) $ of the
vector field $\widetilde{A+X}$ and $c\left( t\right) =V_{X}\left( x\left(
t\right) \right) $. The solution of this linear equation is defined in a
neighborhood of $b$, contradicting the fact that $z\left( t\right)
\rightarrow \infty $ as $t\rightarrow b$.
\end{proof}

As a consequence we obtain the following result.

\begin{proposition}
The infinitesimal action $\theta $ integrates to an action $a\colon G\times
T^{\ast }\mathbb{F}_{\Theta }\rightarrow T^{\ast }\mathbb{F}_{\Theta }$ of a
connected Lie group $G$ with Lie algebra $\mathfrak{g}$. This action $%
a\left( g,x\right) =g\cdot x$ satisfies:

\begin{enumerate}
\item $\theta \left( Y\right) \left( x\right) =\frac{d}{dt}_{\left\vert
t=0\right. }a\left( e^{tY},x\right) $ for all $Y\in \mathfrak{g}$.

\item The action is Hamiltonian since the vector fields $\theta \left(
Y\right) $, $Y\in \mathfrak{g}$ are Hamiltonian vector fields.

\item The projection $\pi \colon T^{\ast }\mathbb{F}_{\Theta }\rightarrow 
\mathbb{F}_{\Theta }$ is equivariant with respect to this new action and the
action of $G$ on $\mathbb{F}_{\Theta }$.

\item The action $a$ is transitive.
\end{enumerate}
\end{proposition}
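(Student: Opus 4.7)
The plan is to dispatch items (1)--(3) as formal consequences of Palais's theorem and the explicit construction of $\theta$, and to treat the transitivity claim (4) by an open-orbit argument. For (1), Palais's theorem gives both the integrated action $a$ and the identity $\theta(Y)(x)=\left.\tfrac{d}{dt}\right|_{t=0}a(e^{tY},x)$. For (2), each $\theta(Y)$ is Hamiltonian by construction: $\theta(A)=A^{\#}$ is the Hamiltonian field of $h_{A}$ for $A\in\mathfrak{k}$, while $\theta(X)=X^{\#}+V_{X}$ is the Hamiltonian field of $h_{X}+F_{X}$ for $X\in\mathfrak{s}$. Assembling these linearly in $Y$ produces a moment map $\mu\colon T^{\ast}\mathbb{F}_{\Theta}\to\mathfrak{g}^{\ast}$. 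For (3), combining $\pi_{\ast}X^{\#}=\widetilde{X}$ with $\pi_{\ast}V_{X}=0$ (Lemma~\ref{lemvert}) yields $\pi_{\ast}\theta(Y)=\widetilde{Y}$ for every $Y\in\mathfrak{g}$; since the flow of $\theta(Y)$ is $t\mapsto a(e^{tY},\cdot)$, this infinitesimal equivariance integrates along one-parameter subgroups, and connectedness of $G$ extends it to all of $G$.

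For (4), I would show that at every $\alpha\in T^{\ast}_{x}\mathbb{F}_{\Theta}$ the vectors $\{\theta(Y)(\alpha):Y\in\mathfrak{g}\}$ span $T_{\alpha}(T^{\ast}\mathbb{F}_{\Theta})$; once this is in hand, every orbit of $a$ is open, and connectedness of $T^{\ast}\mathbb{F}_{\Theta}$ forces the action to have a single orbit. Surjectivity onto the horizontal direction $T_{x}\mathbb{F}_{\Theta}$ is immediate from (3), because $\pi_{\ast}\theta(Y)(\alpha)=\widetilde{Y}(x)$ and $G$ acts transitively on $\mathbb{F}_{\Theta}$. For the vertical direction, I would use the fields $V_{X}$ with $X\in\mathfrak{s}$, whose values at $\alpha$ equal $-df_{X}|_{x}\in T^{\ast}_{x}\mathbb{F}_{\Theta}$ by Lemma~\ref{lemvert}; since $\widetilde{X}$ is the Borel gradient of $f_{X}$, the vertical spanning problem reduces to showing $\{\widetilde{X}(x):X\in\mathfrak{s}\}=T_{x}\mathbb{F}_{\Theta}$.

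The main obstacle is this last surjectivity $\mathfrak{s}\to T_{x}\mathbb{F}_{\Theta}$. By $K$-equivariance (and $\mathrm{Ad}(K)$-invariance of $\mathfrak{s}$) it suffices to verify it at $x=b_{\Theta}$, where $T_{b_{\Theta}}\mathbb{F}_{\Theta}\cong\mathfrak{g}/\mathfrak{p}_{\Theta}$ is modeled by $\mathfrak{n}_{\Theta}^{-}$. The Cartan involution $\vartheta$ (equal to $+1$ on $\mathfrak{k}$ and $-1$ on $\mathfrak{s}$) exchanges $\mathfrak{n}_{\Theta}^{-}$ with $\mathfrak{n}_{\Theta}^{+}\subset\mathfrak{p}_{\Theta}$, so for any $Y\in\mathfrak{n}_{\Theta}^{-}$ the element $\tfrac{1}{2}(Y-\vartheta Y)\in\mathfrak{s}$ satisfies $Y\equiv\tfrac{1}{2}(Y-\vartheta Y)\pmod{\mathfrak{p}_{\Theta}}$, providing the required preimage. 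Combining horizontal and vertical spanning gives openness of every orbit, hence transitivity, completing (4).
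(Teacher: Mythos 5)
Your handling of items (1)--(3) is sound and matches the intended argument: Palais's theorem supplies the action and identity (1), the fields $\theta\left( Y\right) $ are Hamiltonian by construction, and $\pi _{\ast }\theta \left( Y\right) =\widetilde{Y}$ gives equivariance. The problem is in item (4).

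Your strategy is to prove that $\{\theta \left( Y\right) \left( \alpha \right) :Y\in \mathfrak{g}\}$ spans $T_{\alpha }\left( T^{\ast }\mathbb{F}_{\Theta }\right) $ at \emph{every} point $\alpha $, so that every orbit is open and connectedness forces a single orbit. That is a legitimate route, but your verification of the spanning only works when $\alpha $ lies in the zero section. The fields $V_{X}$ are not themselves of the form $\theta \left( Y\right) $; to produce a vertical vector in the image of $\theta $ at $\alpha \in T_{x}^{\ast }\mathbb{F}_{\Theta }$ you must take $Y=X-A$ with $X\in \mathfrak{s}$, $A\in \mathfrak{k}$ and $\widetilde{X}\left( x\right) =\widetilde{A}\left( x\right) $, and then
\begin{equation*}
\theta \left( X-A\right) \left( \alpha \right) =\left( X-A\right) ^{\#}\left( \alpha \right) +V_{X}\left( \alpha \right) .
\end{equation*}
The second summand is the constant $-\left( df_{X}\right) _{x}$, as you say (Lemma \ref{lemvert}), but the first does not vanish off the zero section: $\left( X-A\right) ^{\#}$ is the cotangent lift of a vector field vanishing at $x$, hence restricts to the fibre $T_{x}^{\ast }\mathbb{F}_{\Theta }$ as a generally nonzero \emph{linear} vector field, which vanishes only at the origin of that fibre. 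At a general $\alpha $ the vertical part of the image of $\theta $ is the image of the linear map $Y\mapsto Y^{\#}\left( \alpha \right) +V_{X}\left( \alpha \right) $ (with $X$ the $\mathfrak{s}$-component of $Y$) on the isotropy algebra of $x$, and surjectivity of the constant part alone does not give surjectivity of the sum --- the two terms could cancel. So your reduction to $\{\widetilde{X}\left( x\right) :X\in \mathfrak{s}\}=T_{x}\mathbb{F}_{\Theta }$ establishes openness only of the orbit through the zero section, and a single open orbit does not imply transitivity (compare $\mathrm{Gl}\left( n,\mathbb{R}\right) $ acting on $\mathbb{R}^{n}$).

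The paper closes exactly this gap by a dynamical argument instead of pointwise infinitesimal transitivity: having shown the orbit of $b_{\Theta }$ (the zero covector at the origin) is open, it takes $H\in \mathfrak{a}^{+}$, observes that $V_{H}\equiv 0$ on the fibre over $b_{\Theta }$ so that $\theta \left( H\right) =H^{\#}$ there, and that $H^{\#}$ restricts to that fibre as a linear vector field with all eigenvalues negative; its flow carries every point of the fibre into any neighbourhood of the origin, hence into the open orbit. Together with $K$-transitivity on the base and linearity of the $K$-action this yields transitivity. You need either this step or an honest proof of infinitesimal transitivity at points off the zero section; your final Cartan-involution computation of the surjectivity of $\mathfrak{s}\rightarrow T_{x}\mathbb{F}_{\Theta }$ is fine but does not substitute for it.
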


\begin{proof}
The first two items are due to the construction of $\theta $ and $a$. As to
equivariance it holds because for any $Y\in \mathfrak{g}$ the projection $%
\pi _{\ast }\theta \left( Y\right) $ is the vector field $\widetilde{Y}$
induced by $Y$ via the action on $\mathbb{F}_{\Theta }$.

To prove transitivity we observe that the Cartan decomposition $\mathfrak{g}=%
\mathfrak{k}\oplus \mathfrak{s}$ induces the Cartan decomposition $G=KS$.
The group $K$ acts on $T^{\ast }\mathbb{F}_{\Theta }$ by linear
transformations among the fibres, since $\theta \left( A\right) =A^{\#}$ for 
$A\in \mathfrak{k}$. Since $K$ acts transitively on $\mathbb{F}_{\Theta }$,
it suffices to verify that $G$ acts transitively on a single fibre.

Let $b_{\Theta }\in \mathbb{F}_{\Theta }$ be the origin of $\mathbb{F}%
_{\Theta }$ also seen as the null vector of $T_{b_{\Theta }}^{\ast }\mathbb{F%
}_{\Theta }$. Then the orbit $G\cdot b_{\Theta }$ on $T^{\ast }\mathbb{F}%
_{\Theta }$ is open, because the tangent space to the orbit 
\begin{equation*}
\{\theta \left( Z\right) \left( b_{\Theta }\right) :Z\in \mathfrak{g}\}
\end{equation*}%
coincides with the tangent space $T_{b_{\Theta }}\left( T^{\ast }\mathbb{F}%
_{\Theta }\right) $.

In fact, \ $T_{b_{\Theta }}\left( T^{\ast }\mathbb{F}_{\Theta }\right) $ is
the sum of the (horizontal) tangent space $T\mathbb{F}_{\Theta }$ with the
(vertical) fibre $T_{b_{\Theta }}^{\ast }\mathbb{F}_{\Theta }$. The
transitive action of $K$ on $\mathbb{F}_{\Theta }$ guaranties that $T\mathbb{%
F}_{\Theta }=\{\theta \left( A\right) \left( b_{\Theta }\right) :A\in 
\mathfrak{k}\}$. On the other hand, given $X\in \mathfrak{s}$ there exists $%
A\in \mathfrak{k}$ such that $\widetilde{X}\left( b_{\Theta }\right) =%
\widetilde{A}\left( b_{\Theta }\right) $. In such case, $\widetilde{X-A}%
\left( b_{\Theta }\right) =0$, which implies that $\left( X-A\right)
^{\#}\left( b_{\Theta }\right) =V_{X}\left( b_{\Theta }\right) $. The
vertical vector $V_{X}\left( b_{\Theta }\right) $ is the linear functional of 
$T_{b_{\Theta }}\mathbb{F}_{\Theta }$ given by $v\mapsto \left( \widetilde{X}%
\left( b_{\Theta }\right) ,v\right) _{B}=\left( df_{X}\right) _{b_{\Theta
}}\left( v\right) $. These linear functionals generate $T_{b_{\Theta
}}^{\ast }\mathbb{F}_{\Theta }$ since $\widetilde{X}\left( b_{\Theta
}\right) $, $X\in \mathfrak{s}$, generates $T_{b_{\Theta }}\mathbb{F}%
_{\Theta }$. This shows that the vertical space is contained in the space
tangent to the orbit, concluding the proof that the orbit is open.

Finally, take $H\in \mathfrak{a}^{+}$. Then, $V_{H}\left( b_{\Theta }\right)
=0$ since $\widetilde{H}\left( b_{\Theta }\right) =0$. Moreover, $H^{\#}$ is
vertical in the fibre over $b_{\Theta }$ and restricts to the fibre as a
linear vector field. Since $H$ was chosen in the positive chamber $\mathfrak{%
a}^{+}$, such linear vector field is given by a linear transformation whose
eigenvalues are all negative. This implies that any trajectory of $H^{\#}$
in the fibre intercepts every neighborhood of the origin. Since $G\cdot
b_{\Theta }$ contains a neighborhood of the origin we conclude that $G$ is
transitive in the fibre $T_{b_{\Theta }}^{\ast }\mathbb{F}_{\Theta }$,
showing that the action is transitive.
\end{proof}

The next step is to identify $T^{\ast }\mathbb{F}_{\Theta }$ as a
homogeneous space of $G$, via the transitive action of the previous
proposition. First of all we shall find the isotropy algebra $\mathfrak{l}$
at $b_{\Theta }$, that is, 
\begin{equation*}
\mathfrak{l}=\{Y\in \mathfrak{g}:\theta \left( Y\right) \left( b_{\Theta
}\right) =0\}
\end{equation*}%
where the origin of the flag $b_{\Theta }$ is seen also the null vector of $%
T_{b_{\Theta }}^{\ast }\mathbb{F}_{\Theta }$.

\begin{lemma}
The isotropy subalgebra $\mathfrak{l}=\{Y\in \mathfrak{g}:\theta \left(
Y\right) \left( b_{\Theta }\right) =0\}$ coincides with the isotropy
subalgebra at $H_{\Theta }$ of the adjoint orbit, that is, $\mathfrak{l}=%
\mathfrak{z}_{\Theta }$.
\end{lemma}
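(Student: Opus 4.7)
The plan is to decompose everything according to the Cartan splitting $\mathfrak{g}=\mathfrak{k}\oplus\mathfrak{s}$ and then analyze the horizontal and vertical components of $\theta(Y)(b_\Theta)$ separately inside $T_{0_{b_\Theta}}(T^{\ast}\mathbb{F}_\Theta)\cong T_{b_\Theta}\mathbb{F}_\Theta\oplus T^{\ast}_{b_\Theta}\mathbb{F}_\Theta$. Write $Y=A+X$ with $A\in\mathfrak{k}$ and $X\in\mathfrak{s}$. By the definition of $\theta$ we have $\theta(Y)(b_\Theta)=(A+X)^{\#}(b_\Theta)+V_X(b_\Theta)$; the first summand, being the cotangent lift of $\widetilde{A+X}$ evaluated at the zero covector, lies in the horizontal subspace and equals (under the canonical identification) $\widetilde{A+X}(b_\Theta)\in T_{b_\Theta}\mathbb{F}_\Theta$, while by Lemma \ref{lemvert} the second summand is vertical, equal to $-(df_X)_{b_\Theta}\in T^{\ast}_{b_\Theta}\mathbb{F}_\Theta$. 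Hence $\theta(Y)(b_\Theta)=0$ if and only if both $\widetilde{A+X}(b_\Theta)=0$ and $(df_X)_{b_\Theta}=0$.

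Next, I would translate these two vanishing conditions into algebraic membership conditions. The first reads $A+X\in\mathfrak{p}_\Theta$, since $\mathfrak{p}_\Theta$ is the isotropy of $b_\Theta$ under the $G$-action on $\mathbb{F}_\Theta$. For the second, the characterizing property of the Borel metric stated just before Lemma \ref{lemvert} is that $\widetilde{X}$ is the $(\cdot,\cdot)_B$-gradient of $f_X$ for every $X\in\mathfrak{s}$; therefore $(df_X)_{b_\Theta}=0$ is equivalent to $\widetilde{X}(b_\Theta)=0$, i.e. to $X\in\mathfrak{p}_\Theta$. Combining, $Y\in\mathfrak{l}$ is equivalent to $X\in\mathfrak{s}\cap\mathfrak{p}_\Theta$ and $A\in\mathfrak{k}\cap\mathfrak{p}_\Theta$.

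The key algebraic step, and the only place that needs a small argument, is to identify $\mathfrak{s}\cap\mathfrak{p}_\Theta$ with $\mathfrak{s}\cap\mathfrak{z}_\Theta$. For this I would invoke the Cartan involution $\vartheta$: since $\mathfrak{p}_\Theta=\mathfrak{z}_\Theta\oplus\mathfrak{n}^{+}_\Theta$ and $\vartheta(\mathfrak{z}_\Theta)=\mathfrak{z}_\Theta$, $\vartheta(\mathfrak{n}^{+}_\Theta)=\mathfrak{n}^{-}_\Theta$, any $X\in\mathfrak{s}\cap\mathfrak{p}_\Theta$ satisfies $X\in\mathfrak{p}_\Theta$ and $-X=\vartheta(X)\in\mathfrak{z}_\Theta\oplus\mathfrak{n}^{-}_\Theta$, so the decomposition $\mathfrak{g}=\mathfrak{n}^{-}_\Theta\oplus\mathfrak{z}_\Theta\oplus\mathfrak{n}^{+}_\Theta$ forces $X\in\mathfrak{z}_\Theta$. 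Similarly $\mathfrak{k}\cap\mathfrak{p}_\Theta=\mathfrak{k}_\Theta\subset\mathfrak{z}_\Theta$ by item (8) of the notation list. Hence $Y=A+X\in\mathfrak{z}_\Theta$.

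The converse inclusion $\mathfrak{z}_\Theta\subset\mathfrak{l}$ is immediate: for $Y\in\mathfrak{z}_\Theta$, the $\vartheta$-invariance of $\mathfrak{z}_\Theta$ gives $A,X\in\mathfrak{z}_\Theta\subset\mathfrak{p}_\Theta$, so both $\widetilde{A+X}(b_\Theta)$ and $\widetilde{X}(b_\Theta)$ vanish and therefore so does $(df_X)_{b_\Theta}$. I do not expect any serious obstacle here; the only subtlety is being careful that the vertical component $V_X(b_\Theta)$ really does vanish at $b_\Theta$ in the zero section (and not merely have zero projection), which is exactly the content of Lemma \ref{lemvert} combined with the Borel-metric gradient property.
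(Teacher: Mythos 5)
Your proof is correct and follows essentially the same route as the paper: decompose $Y=A+X$ along $\mathfrak{g}=\mathfrak{k}\oplus\mathfrak{s}$, split $\theta(Y)(b_{\Theta})$ into its component tangent to the zero section and the vertical component $V_{X}(b_{\Theta})=-(df_{X})_{b_{\Theta}}$, and use the gradient property of the Borel metric to reduce everything to $\widetilde{A}(b_{\Theta})=\widetilde{X}(b_{\Theta})=0$, i.e.\ to membership in $\mathfrak{p}_{\Theta}$. The only (harmless) divergences are at the end: the paper asserts $\mathfrak{p}_{\Theta}\cap\mathfrak{s}\subset\mathfrak{z}_{\Theta}$ without proof and obtains the reverse inclusion $\mathfrak{z}_{\Theta}\subset\mathfrak{l}$ by a dimension count, whereas you justify the former via the Cartan involution and verify the latter directly.
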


\begin{proof}
Let $Y\in \mathfrak{g}$ with $\theta \left( Y\right) \left( b_{\Theta
}\right) =0$ and $Y=A+X$, $A\in \mathfrak{k}$ and $X\in \mathfrak{s}$. Then, 
$\theta \left( Y\right) =A^{\#}+X^{\#}+V_{X}$ and since $A^{\#}\left(
b_{\Theta }\right) =X^{\#}\left( b_{\Theta }\right) =0$, it follows that $%
V_{X}\left( b_{\Theta }\right) =0$. However, as in the previous proof, $%
V_{X}\left( b_{\Theta }\right) $ is the linear functional $v\mapsto \left( 
\widetilde{X}\left( b_{\Theta }\right) ,v\right) _{B}$. Therefore, $%
\widetilde{X}\left( b_{\Theta }\right) =0$. On the other hand, $\theta
\left( Y\right) \left( b_{\Theta }\right) =0$ implies that $\widetilde{Y}%
\left( b_{\Theta }\right) =0$, and consequently $\widetilde{A}\left(
b_{\Theta }\right) =-\widetilde{X}\left( b_{\Theta }\right) =0$. This shows
that $A\in \mathfrak{p}_{\Theta }\cap \mathfrak{k}\subset \mathfrak{z}%
_{\Theta }$ and $B\in \mathfrak{p}_{\Theta }\cap \mathfrak{s}\subset 
\mathfrak{z}_{\Theta }$, thus $Y\in \mathfrak{z}_{\Theta }$. Therefore, 
\begin{equation*}
\{Y\in \mathfrak{g}:\theta \left( Y\right) \left( b_{\Theta }\right)
=0\}\subset \mathfrak{z}_{\Theta }.
\end{equation*}%
Equality follows from the fact that these algebras has the same dimension,
since they are isotropy algebras of spaces of equal dimension.
\end{proof}

The equality of the isotropy Lie algebras $\mathfrak{l}=\mathfrak{z}_{\Theta
}$ show at once the equality of the isotropy subgroups if we know in advance
that they are connected, as happens for instance to complex Lie algebras.
The next statement shows that the isotropy groups indeed coincide.

\begin{proposition}
Let $L$ be the isotropy group of the action $a\colon G\times T^{\ast }%
\mathbb{F}_{\Theta }\rightarrow T^{\ast }\mathbb{F}_{\Theta }$ at $b_{\Theta
}$. Then, $L=Z_{\Theta }$.
\end{proposition}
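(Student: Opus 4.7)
I prove $L=Z_\Theta$ by establishing the inclusions $Z_\Theta\subseteq L$ and $L\subseteq Z_\Theta$ in turn.

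For $Z_\Theta\subseteq L$, I use the global Cartan decomposition of the centralizer, $Z_\Theta=K_\Theta\exp(\mathfrak{z}_\Theta\cap\mathfrak{s})$, and verify each factor lies in $L$. The factor $K_\Theta$ lies in $L$ because $\theta(A)=A^\#$ for $A\in\mathfrak{k}$, so the integrated $K$-action coincides with the canonical cotangent lift of the $K$-action on $\mathbb{F}_\Theta$; since this lift is linear on fibers, any element fixing $b_\Theta\in\mathbb{F}_\Theta$ fixes the zero covector. For the second factor, I show $\theta(X)(b_\Theta)=0$ for every $X\in\mathfrak{z}_\Theta\cap\mathfrak{s}$: the horizontal part $X^\#(b_\Theta)=\widetilde X(b_\Theta)$ vanishes since $X\in\mathfrak{z}_\Theta\subseteq\mathfrak{p}_\Theta$ fixes $b_\Theta$, and the vertical part $V_X(b_\Theta)=-df_X|_{b_\Theta}$ vanishes via the directional derivative formula $\widetilde A f_X(b_\Theta)=\langle H_\Theta,[A,X]\rangle$ (from the lemma preceding Corollary~\ref{corbrackets}) combined with invariance of the Cartan--Killing form and $[H_\Theta,X]=0$: $\langle H_\Theta,[A,X]\rangle=\langle[H_\Theta,X],A\rangle=0$ for every $A\in\mathfrak{k}$, which forces $df_X|_{b_\Theta}=0$ since the vectors $\widetilde A(b_\Theta)$ span $T_{b_\Theta}\mathbb{F}_\Theta$.

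For the reverse inclusion $L\subseteq Z_\Theta$, the $G$-equivariance of $\pi\colon T^*\mathbb{F}_\Theta\to\mathbb{F}_\Theta$ forces $L\subseteq P_\Theta$, since any $\ell\in L$ must fix $b_\Theta$ in $\mathbb{F}_\Theta$. The Langlands decomposition $P_\Theta=Z_\Theta\ltimes N_\Theta$ (with $N_\Theta=\exp\mathfrak{n}_\Theta^+$) writes $\ell=zn$ uniquely with $z\in Z_\Theta\subseteq L$, so $n\in L\cap N_\Theta$, and it suffices to show $L\cap N_\Theta=\{1\}$. Under $\iota$, the fiber $T^*_{b_\Theta}\mathbb{F}_\Theta$ is identified with the affine subspace $H_\Theta+\mathfrak{n}_\Theta^+\subseteq\mathcal{O}(H_\Theta)$, on which $N_\Theta$ acts by the restricted adjoint representation; the paragraph following the $\mathfrak{sl}(n)$ example records that $n\mapsto\mathrm{Ad}(n)H_\Theta-H_\Theta$ is a diffeomorphism $N_\Theta\to\mathfrak{n}_\Theta^+$, so only $n=1$ fixes $H_\Theta$. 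Hence $L\cap N_\Theta=\{1\}$ and $\ell=z\in Z_\Theta$.

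The main obstacle is the identification used in the second half: reading the action $a$ restricted to $N_\Theta$ on the fiber over $b_\Theta$ as the adjoint action on $H_\Theta+\mathfrak{n}_\Theta^+$. This relies on recognizing $a$ as the action $g\cdot\alpha=\iota\circ\mathrm{Ad}(g)\circ\iota^{-1}(\alpha)$ motivated at the opening of the subsection: both actions share the infinitesimal generator $\theta$ by construction and hence coincide on the connected group $G$, but one should make this equality explicit before invoking the adjoint-orbit parametrization of the fiber.
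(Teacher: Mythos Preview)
Your first inclusion $Z_\Theta\subseteq L$ is correct and argued somewhat differently from the paper: the paper obtains it from $(Z_\Theta)_0=L_0$ (via the preceding Lie-algebra lemma) together with the fact that $K_\Theta$ meets every connected component of $Z_\Theta$, whereas you verify it directly through the global Cartan decomposition $Z_\Theta=K_\Theta\exp(\mathfrak{z}_\Theta\cap\mathfrak{s})$. Both routes work.

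The reverse inclusion $L\subseteq Z_\Theta$ has the gap you yourself flag, and your proposed patch does not close it. You assert that $a$ and $\iota\circ\mathrm{Ad}(\cdot)\circ\iota^{-1}$ ``share the infinitesimal generator $\theta$ by construction'', but that is not how $\theta$ arises in the paper: $\theta$ is built ab initio as a Lie algebra of Hamiltonian vector fields ($A\mapsto A^\#$ on $\mathfrak{k}$, $X\mapsto X^\#+V_X$ on $\mathfrak{s}$), merely \emph{motivated} by the transported adjoint action. The identification of the two actions is established only \emph{after} the present proposition, in Theorem~\ref{propmomento}, where the equivariant moment map is shown to invert $\iota$. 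Invoking the adjoint parametrization of the fiber $H_\Theta+\mathfrak{n}_\Theta^+$ to analyze the $a$-action of $N_\Theta$ is therefore a forward reference, and verifying it independently at this stage would require computing $\theta(Y)$ along the fiber for $Y\in\mathfrak{n}_\Theta^+$, which you have not done.

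The paper sidesteps this entirely with a one-line argument: since the preceding lemma gives $\mathfrak{l}=\mathfrak{z}_\Theta$ and any isotropy group normalizes its own Lie algebra, $L$ normalizes $\mathfrak{z}_\Theta$; but the normalizer of $\mathfrak{z}_\Theta$ in $G$ is $Z_\Theta$, so $L\subseteq Z_\Theta$. This uses only standard structure theory and makes no appeal to $\iota$ or to how $a$ acts on fibers.
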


\begin{proof}
By the previous lemma the Lie algebras of these groups coincide, therefore
their connected components of the identity $\left( Z_{\Theta }\right) _{0}$
and $L_{0}$ are equal. Since $L$ normalizes its Lie algebra, it follows that 
$L$ normalizes $\mathfrak{z}_{\Theta }$. Nevertheless, the normalizer of $%
\mathfrak{z}_{\Theta }$ is $Z_{\Theta }$. Therefore, $L\subset Z_{\Theta }$.

To verify the opposite inclusion, consider the restriction of the action $a$
to the subgroup $K$. For $A\in \mathfrak{k}$, $\theta \left( A\right)
=A^{\#} $. Thus, the action of $K$ on $T^{\ast }\mathbb{F}_{\Theta }$ is
linear. Therefore the isotropy group $K\cap L$ coincides with the isotropy
group of the action on $\mathbb{F}_{\Theta }$ at $b_{\Theta }$, that is, $%
K\cap L=K_{\Theta }$. Now, we know that $K_{\Theta }$ intercepts all
connected components of $Z_{\Theta }$. Therefore, the relations $K_{\Theta
}\subset L$ and $\left( Z_{\Theta }\right) _{0}=L_{0}$ imply that $Z_{\Theta
}\subset L$.
\end{proof}

\begin{remark}
The group $G$ that integrates the infinitesimal action $\theta $ is
necessarily the adjoint group $\mathrm{Aut}_{0}\left( \mathfrak{g}\right) $,
whose center is trivial. This happens because the action of $G$ on $T^{\ast }%
\mathbb{F}_{\Theta }$ is effective, as $G$ is a subgroup of diffeomorphisms
of $T^{\ast }\mathbb{F}_{\Theta }$. An effective action on the adjoint orbit
only happens for the adjoint group, since the centre $Z\left( G\right)
\subset Z_{\Theta }$ and if $z\in Z\left( G\right) $ then $\mathrm{Ad}\left(
z\right) =\mathrm{id}$.
\end{remark}

\subsection{Moment map on $T^{\ast }\mathbb{F}$}

The action $a\colon G\times T^{\ast }\mathbb{F}_{\Theta }\rightarrow T^{\ast
}\mathbb{F}_{\Theta }$ defined above is a Hamiltonian action, since $\theta
\left( Y\right) $ is a Hamiltonian field for each $Y\in \mathfrak{g}$. We
can then define a moment map $\mu \colon T^{\ast }\mathbb{F}_{\Theta
}\rightarrow \mathfrak{g}^{\ast }$, by $\mu \left( \xi \right) \left(
Y\right) =\mathrm{en}_{Y}\left( \xi \right) $, where $\mathrm{en}_{Y}\colon
T^{\ast }\mathbb{F}_{\Theta }\rightarrow \mathbb{R}$ is the energy function
of $\theta \left( Y\right) $ e $\xi \in T^{\ast }\mathbb{F}_{\Theta }$. That
is,

\begin{itemize}
\item if $A\in \mathfrak{k}$ then $\mu \left( \xi \right) \left( A\right)
=\xi \left( \widetilde{A}\left( x\right) \right) $, $x=\pi \left( \xi
\right) $, and

\item if $X\in \mathfrak{s}$ then $\mu \left( \xi \right) \left( X\right)
=\xi \left( \widetilde{X}\left( x\right) \right) +\langle X,x\rangle $, $%
x=\pi \left( \xi \right) $, where $\langle \cdot ,\cdot \rangle $ is
Cartan--Killing.
\end{itemize}

Associated with $\mu $ we define a cocycle $c\colon G\rightarrow \mathfrak{g}%
^{\ast }$ by 
\begin{equation*}
c\left( g\right) =\mu \left( g\cdot \xi \right) -\mathrm{Ad}^{\ast }\mu
\left( \xi \right) ,
\end{equation*}%
where $\xi \in T^{\ast }\mathbb{F}_{\Theta }$ is arbitrary since the second
hand side is constant as a function of $\xi $ (see \cite{abmars}). The map $%
c $ is a cocycle in the sense that 
\begin{equation*}
c\left( gh\right) =\mathrm{Ad}^{\ast }\left( g\right) c\left( h\right)
+c\left( g\right) ,
\end{equation*}%
which means that $c$ is a $1$-cocycle of the cohomology of the coadjoint
representation of $G$ on $\mathfrak{g}^{\ast }$.

In the case when $\mathfrak{g}$ is semisimple the Cartan--Killing form $%
\langle \cdot ,\cdot \rangle $ interchanges the representations: coadjoint $%
\mathrm{Ad}^{\ast }$ and adjoint $\mathrm{Ad}$. With this we can define a
moment map $\mu \colon T^{\ast }\mathbb{F}_{\Theta }\rightarrow \mathfrak{g}$
(same notation) by $\langle \mu \left( \xi \right) ,\cdot \rangle =\mathrm{en%
}_{Y}\left( \xi \right) $. So the cocycle becomes $c\left( g\right) =\mu
\left( g\cdot \xi \right) -\mathrm{Ad}\left( g\right) \mu \left( \xi \right) 
$, which satisfies $c\left( gh\right) =\mathrm{Ad}\left( g\right) c\left(
h\right) +c\left( g\right) $.

\begin{theorem}
\label{propmomento}Let $\mu :T^{\ast }\mathbb{F}_{\Theta }\mapsto \mathfrak{g%
}$ be the moment map of the action $a\colon G\times T^{\ast }\mathbb{F}%
_{\Theta }\rightarrow T^{\ast }\mathbb{F}_{\Theta }$ constructed above, and
let $c\colon G\rightarrow \mathfrak{g}$ be the corresponding cocycle. Then,

\begin{enumerate}
\item $c$ is identically zero, which means that $\mu \colon T^{\ast }\mathbb{%
F}_{\Theta }\rightarrow \mathfrak{g}$ is equivariant, that is, $\mu \left(
g\cdot \xi \right) =\mathrm{Ad}\mu \left( \xi \right) $.

\item $\mu $ is a diffeomorphism between $T^{\ast }\mathbb{F}_{\Theta }$ and
the adjoint orbit $\mathrm{Ad}\left( G\right) H_{\Theta }$.

\item $\mu ^{\ast }\omega =\Omega $, where $\Omega $ is the canonical
symplectic form of $T^{\ast }\mathbb{F}_{\Theta }$ and $\omega $ the (real)
Kirillov--Kostant--Souriaux form on $\mathrm{Ad}\left( G\right) H_{\Theta }$.

\item $\mu \colon T^{\ast }\mathbb{F}_{\Theta }\rightarrow \mathrm{Ad}\left(
G\right) H_{\Theta }$ is the inverse of the map $\iota \colon \mathrm{Ad}%
\left( G\right) H_{\Theta }\rightarrow T^{\ast }\mathbb{F}_{\Theta }$ of
Theorem \ref{teodifeocotan} given in (\ref{fordifeoexplicit}).
\end{enumerate}
\end{theorem}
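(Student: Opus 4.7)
My plan is to prove the four items in the order $(4) \Rightarrow (2) \Rightarrow (1) \Rightarrow (3)$, by first computing $\mu \circ \iota$ explicitly on the adjoint orbit and then deducing the remaining statements from that identification together with general symplectic principles.

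For item (4), I would pick a generic point $Y = \mathrm{Ad}(k)(H_{\Theta}+X') \in \mathcal{O}(H_{\Theta})$ with $k \in K$ and $X' \in \mathfrak{n}_{\Theta}^{+}$, set $\xi = \iota(Y) = \langle \mathrm{Ad}(k)X',\cdot\rangle \in T^{\ast}_{kb_{\Theta}}\mathbb{F}_{\Theta}$, and evaluate $\mu(\xi)$ separately on $\mathfrak{k}$ and $\mathfrak{s}$ using the defining formulas for the moment map. The key computation is that, under the identification $T_{kb_{\Theta}}\mathbb{F}_{\Theta} \cong \mathrm{Ad}(k)\mathfrak{n}_{\Theta}^{-}$ of (\ref{fordifeoexplicit}), the fundamental vector field $\widetilde{Z}(kb_{\Theta})$ corresponds to $\mathrm{Ad}(k)$ applied to the $\mathfrak{n}_{\Theta}^{-}$-component of $\mathrm{Ad}(k^{-1})Z$. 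Combining $\mathrm{Ad}$-invariance of the Cartan--Killing form with the fact that $\langle X',\cdot\rangle$ annihilates $\mathfrak{p}_{\Theta} = \mathfrak{z}_{\Theta} \oplus \mathfrak{n}_{\Theta}^{+}$ (pairs $\langle \mathfrak{g}_{\alpha},\mathfrak{g}_{\beta}\rangle$ are nonzero only when $\alpha+\beta=0$), the formula $\mu(\xi)(A) = \xi(\widetilde{A}(kb_{\Theta}))$ collapses to $\langle Y,A\rangle$ for $A \in \mathfrak{k}$, and the formula $\mu(\xi)(X) = \xi(\widetilde{X}(kb_{\Theta})) + \langle X,\mathrm{Ad}(k)H_{\Theta}\rangle$ collapses to $\langle Y,X\rangle$ for $X \in \mathfrak{s}$ — note that the extra summand $\langle X,\mathrm{Ad}(k)H_{\Theta}\rangle$ supplies precisely the missing $H_{\Theta}$-contribution. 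This yields $\mu(\iota(Y)) = Y$, so $\mu = \iota^{-1}$ on all of $T^{\ast}\mathbb{F}_{\Theta}$.

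Items (2) and (1) follow quickly. Since $\iota$ is a diffeomorphism onto $T^{\ast}\mathbb{F}_{\Theta}$ by Theorem \ref{teodifeocotan}, the equality $\mu = \iota^{-1}$ immediately gives (2). For (1), the action $a$ of $G$ on $T^{\ast}\mathbb{F}_{\Theta}$ was built precisely so that $\iota$ intertwines it with $\mathrm{Ad}$ on the orbit; hence $\mu = \iota^{-1}$ is $G$-equivariant, i.e.\ $c \equiv 0$. Item (3) I handle by a standard moment-map argument: since $\mu$ is the moment map for the Hamiltonian action $a$, we have $i_{\theta(Y)}\Omega = d\langle \mu,Y\rangle$ for every $Y \in \mathfrak{g}$; on the orbit side, the inclusion $\mathcal{O}(H_{\Theta}) \hookrightarrow \mathfrak{g}$ is the moment map for $\mathrm{Ad}$ with respect to the KKS form, giving $i_{\widetilde{Y}}\omega = d\langle \cdot,Y\rangle$. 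Equivariance of $\mu$ says $\mu_{\ast}\theta(Y) = \widetilde{Y}\circ\mu$, so pulling back one obtains $i_{\theta(Y)}\mu^{\ast}\omega = \mu^{\ast}d\langle \cdot,Y\rangle = d\langle \mu,Y\rangle = i_{\theta(Y)}\Omega$. Thus $i_{\theta(Y)}(\mu^{\ast}\omega - \Omega) = 0$ for every $Y$, and transitivity of the $G$-action (established in the previous proposition) guarantees that the $\theta(Y)(\xi)$ span $T_{\xi}(T^{\ast}\mathbb{F}_{\Theta})$ at every point, forcing $\mu^{\ast}\omega = \Omega$.

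The main obstacle is purely computational: carefully keeping track of the identifications of the tangent spaces $T_{kb_{\Theta}}\mathbb{F}_{\Theta}$ with subspaces of $\mathfrak{g}$ in item (4), and verifying that pairing with $\mathrm{Ad}(k)X' \in \mathrm{Ad}(k)\mathfrak{n}_{\Theta}^{+}$ effectively extracts the $\mathrm{Ad}(k)\mathfrak{n}_{\Theta}^{-}$-component of its argument via Cartan--Killing. Once this calculation is done, the remaining deductions are essentially formal consequences of equivariance and transitivity.
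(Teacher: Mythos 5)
Your computation for item (4) is sound and in fact more complete than what the paper records: you verify $\mu\circ\iota=\mathrm{id}$ at an arbitrary point $\mathrm{Ad}(k)(H_{\Theta}+X')$, whereas the paper only carries out this calculation on the fibre over the origin. Item (2) then follows as you say, and your argument for item (3) is the standard moment-map computation (which the paper merely cites as a standard fact), valid once equivariance and transitivity are available.

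The genuine gap is in your deduction of item (1). You assert that ``the action $a$ was built precisely so that $\iota$ intertwines it with $\mathrm{Ad}$.'' It was not: $a$ is obtained by integrating, via Palais's theorem, the infinitesimal action $\theta(A)=A^{\#}$ for $A\in\mathfrak{k}$ and $\theta(X)=X^{\#}+V_{X}$ for $X\in\mathfrak{s}$, whose definition makes no reference to $\iota$. The statement that $\iota$ (equivalently $\mu$) intertwines $a$ with $\mathrm{Ad}$ is exactly the conjunction of items (1) and (4), so invoking it to prove (1) is circular. Nor does (1) follow formally from (4): knowing that $\mu=\iota^{-1}$ as a map says nothing a priori about how $\mu$ transforms under $g\cdot\xi$ for general $g\in G$, because for $g\notin K$ you have no independent description of the $a$-action. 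The repair is to compute the cocycle directly: $c(k)=0$ for $k\in K$ because the $K$-action is the linear lift and $\iota$ is $K$-equivariant by Theorem \ref{teodifeocotan}(1), so $\mu(k\cdot\xi)=\iota^{-1}(\widetilde{k}\xi)=\mathrm{Ad}(k)\iota^{-1}(\xi)$; and $c(h)=0$ for $h\in A$ because $b_{\Theta}$ is a fixed point of the $A$-action (indeed $\theta(H)(b_{\Theta})=H^{\#}(b_{\Theta})+V_{H}(b_{\Theta})=0$ for $H\in\mathfrak{a}$) while $\mathrm{Ad}(h)H_{\Theta}=H_{\Theta}$. Then the polar decomposition $G=K(\mathrm{cl}A^{+})K$ together with the cocycle identity $c(gh)=\mathrm{Ad}(g)c(h)+c(g)$ forces $c\equiv 0$. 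With (1) established this way, your item (3) goes through unchanged.
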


\begin{proof}
The result is a consequence of the following items:

\begin{enumerate}
\item $\mu \left( b_{\Theta }\right) =H_{\Theta }$, where $b_{\Theta }$ is
the origin of $\mathbb{F}_{\Theta }$ also regarded as the null vector in $%
T_{b_{\Theta }}^{\ast }\mathbb{F}_{\Theta }$. In fact, if $A\in \mathfrak{k}$
then $\mu \left( b_{\Theta }\right) \left( A\right) =b_{\Theta }\left( 
\widetilde{A}\left( b_{\Theta }\right) \right) =0$. Whereas if $X\in 
\mathfrak{s}$ then 
\begin{eqnarray*}
\mu \left( b_{\Theta }\right) \left( X\right) &=&b_{\Theta }\left( 
\widetilde{X}\left( b_{\Theta }\right) \right) +\langle X,H_{\Theta }\rangle
\\
&=&\langle X,H_{\Theta }\rangle .
\end{eqnarray*}%
Therefore, $H_{\Theta }\in \mathfrak{g}$ satisfies $\mu \left( b_{\Theta
}\right) \left( Y\right) =\langle Y,H_{\Theta }\rangle $ for all $Y\in 
\mathfrak{g}$, which means that $\mu \left( b_{\Theta }\right) =H_{\Theta }$.

\item If $x\in \mathbb{F}_{\Theta }$ with $x=\mathrm{Ad}\left( k\right)
H_{\Theta }$, $k\in K$, then $\mu \left( x\right) =\mathrm{Ad}\left(
k\right) H_{\Theta }$. This follows by the same argument in the previous
item, where we regard $x$ as the zero vector in $T_{x}\mathbb{F}_{\Theta }$
and thus obtain $x\left( \widetilde{X}\left( x\right) \right) =0$ for any $%
X\in \mathfrak{g}$.

\item $c\left( k\right) =0$ if $k\in K$ as follows by definition $c\left(
k\right) =\mu \left( k\cdot b_{\Theta }\right) -\mathrm{Ad}\left( k\right)
\mu \left( b_{\Theta }\right) $ and the previous items.

\item $c\left( h\right) =0$ if $h\in A$. In fact, $\mathrm{Ad}\left(
h\right) \mu \left( b_{\Theta }\right) =\mathrm{Ad}\left( h\right) H_{\Theta
}=H_{\Theta }$. On the other hand, if $H\in \mathfrak{a}$ then $\theta
\left( H\right) \left( b_{\Theta }\right) =0$ since $H^{\#}\left( b_{\Theta
}\right) =0$ and $V_{H}\left( b_{\Theta }\right) =0$, since $\left(
df_{H}\right) _{b_{\Theta }}\left( \cdot \right) =\left( \widetilde{H}\left(
b_{\Theta }\right) ,\cdot \right) =0$. This implies that $b_{\Theta }$ is a
fixed point by the action of $A$ on $T^{\ast }\mathbb{F}_{\Theta }$.
Therefore, $\mu \left( h\cdot b_{\Theta }\right) =\mu \left( b_{\Theta
}\right) =H_{\Theta }$, concluding that $c\left( h\right) =\mu \left( h\cdot
b_{\Theta }\right) -\mathrm{Ad}\left( h\right) \mu \left( b_{\Theta }\right)
=0$.

\item $c\equiv 0$, that is, $\mu $ is equivariant: $\mu \left( g\cdot \xi
\right) =\mathrm{Ad}\mu \left( \xi \right) $. This follows from the polar
decomposition $G=K\left( \mathrm{cl}A^{+}\right) K$ and two applications of
the cocycle property. In fact, if $g=uhv\in K\left( \mathrm{cl}A^{+}\right)
K $ then 
\begin{eqnarray*}
c\left( g\right) &=&c\left( uhv\right) =\mathrm{Ad}\left( uh\right) c\left(
v\right) +c\left( uh\right) \\
&=&\mathrm{Ad}\left( u\right) c\left( h\right) +c\left( u\right) \\
&=&0.
\end{eqnarray*}

\item Since $\mu $ is equivariant and $\mu \left( b_{\Theta }\right)
=H_{\Theta }$, its image is contained in the adjoint orbit $\mathrm{Ad}%
\left( G\right) H_{\Theta }$. The diffeomorphism property is due to
equivariance, transitivity of $G$ on the spaces and the fact that the
isotropy subgroups on both spaces coincide. The pullback of item (3) is a
standard fact about moment maps of Hamiltonian actions.

\item To see the inverse of $\mu $ take $\xi =\iota \left( H_{\Theta
}+Z\right) \in T_{b_{\Theta }}^{\ast }\mathbb{F}_{\Theta }$. If $A\in 
\mathfrak{k}$ and $x\in \mathfrak{s}$ then 
\begin{equation*}
\langle \mu \left( \xi \right) ,A\rangle =\xi \left( \widetilde{A}\left(
b_{\Theta }\right) \right) \qquad \langle \mu \left( \xi \right) ,X\rangle
=\xi \left( \widetilde{X}\left( b_{\Theta }\right) \right) +f_{X}\left(
b_{\Theta }\right) .
\end{equation*}%
Write $A=A^{-}+A^{0}+A^{+}\in \mathfrak{g}=\mathfrak{n}_{\Theta }^{-}\oplus 
\mathfrak{z}_{\Theta }\oplus \mathfrak{n}_{\Theta }^{+}$. Then $\widetilde{A}%
\left( b_{\Theta }\right) =\widetilde{A^{-}}\left( b_{\Theta }\right) $ so $%
\xi \left( \widetilde{A}\left( b_{\Theta }\right) \right) =\langle
Z,A^{-}\rangle $. Since $\mathfrak{n}_{\Theta }^{+}$ is Cartan-Killing
orthogonal to $\mathfrak{z}_{\Theta }\oplus \mathfrak{n}_{\Theta }^{+}$ we
have $\xi \left( \widetilde{A}\left( b_{\Theta }\right) \right) =\langle
Z,A^{-}\rangle =\langle Z,A\rangle $, that is, 
\begin{equation*}
\langle \mu \left( \xi \right) ,A\rangle =\langle Z,A\rangle =\langle
H_{\Theta }+Z,A\rangle
\end{equation*}%
because $\langle H_{\Theta },A\rangle =0$. Similarly $\xi \left( 
\widetilde{X}\left( b_{\Theta }\right) \right) =\langle Z,X\rangle $ and
since $f_{X}\left( b_{\Theta }\right) =\langle H_{\Theta },X\rangle $ we
have $\langle \mu \left( \xi \right) ,X\rangle =\langle H_{\Theta
}+Z,X\rangle $. Hence $\mu \left( \iota \left( H_{\Theta }+Z\right) \right)
=H_{\Theta }+Z$, showing that $\mu $ and $\iota $ are inverse to each other.
\end{enumerate}
\end{proof}

\begin{remark}
(Other actions.) Besides the action defined above, there are other
infinitesimal actions $\mathfrak{g}$ on $T^{\ast }\mathbb{F}_{\Theta }$ that
play the same role:

\begin{enumerate}
\item Take $\theta ^{-}\left( A\right) =A^{\#}$ if $A\in \mathfrak{k}$ and $%
\theta ^{-}\left( X\right) =X^{\#}-V_{X}$ if $X\in \mathfrak{s}$. Then, $%
\theta ^{-}$ is still and infinitesimal representation, which gives rise to
another action of $G$.

\item If $\left( \left( \cdot ,\cdot \right) \right) $ is a $K$-invariant
Riemannian metric on $\mathbb{F}_{\Theta }$ such that each $\widetilde{X}$, $%
X\in \mathfrak{s}$, is a gradient of the function $\widehat{f}_{X}$ then the
same game can be played with the Hamiltonian vector field of $\widehat{F}%
_{X}=\widehat{f}\circ \pi $ in place of $V_{X}$.
\end{enumerate}
\end{remark}

\section{Embedding of adjoint orbits into products \label{secsplit}}

We recall here a known realization of the homogeneous space $G/Z_{\Theta }$
as an orbit in a product of flag manifolds (see \cite{orbadj}, Section 3,
for the details).

Let $w_{0}$ be the principal involution of the Weyl group $\mathcal{W}$,
that is, the element of highest length as a product of simple roots. Then $%
-w_{0}\mathfrak{a}^{+}=\mathfrak{a}^{+}$ and $-w_{0}\Sigma =\Sigma $, so
that $-w_{0}$ is a symmetry of the Dynkin diagram of $\Sigma $. For a subset 
$\Theta \subset \Sigma $ we put $\Theta ^{\ast }=-w_{0}\Theta $ and refer to 
$\mathbb{F}_{\Theta ^{\ast }}$ as the flag manifold dual to $\mathbb{F}%
_{\Theta }$. Clearly if $H_{\Theta }$ is a characteristic element for $%
\Theta $ then $-w_{0}H_{\Theta }$ is characteristic for $\Theta ^{\ast }$.
(Except for the simple Lie algebras with diagrams $A_{l}$, $D_{l}$ and $%
E_{6} $ all the flag manifolds are self-dual. In $A_{l}=\mathfrak{sl}\left(
n\right) $, $n=l+1$, we have for instance, the dual to the Grassmannian $%
\mathrm{Gr}_{k}\left( n\right) $ is $\mathrm{Gr}_{n-k}\left( n\right) $.)

Consider the diagonal action of $G$ on the product $\mathbb{F}_{\Theta
}\times \mathbb{F}_{\Theta ^{\ast }}$ as $(g,(x,y))\mapsto (gx,gy)$, $g\in G$%
, $x,y\in \mathbb{F}$. As we check next it has just one open and dense orbit
which is $G/Z_{\Theta }$.

Let $x_{0}$ be the origin of $\mathbb{F}_{\Theta }$. Since $G$ acts
transitively on $\mathbb{F}_{H}$, all the $G$-orbits of the diagonal action
have the form $G\cdot (x_{0},y)$, with $y\in \mathbb{F}_{\Theta ^{\ast }}$.
Thus, the $G$-orbits are in bijection with the orbits of the action of $%
P_{\Theta ^{\ast }}$ on $\mathbb{F}_{\Theta ^{\ast }}$, which is known to be
the orbits through $wy_{0}$, $w\in \mathcal{W}$, where $y_{0}$ is the origin
of $\mathbb{F}_{\Theta ^{\ast }}$. Hence the $G$-orbits are $G\cdot
(x_{0},wy_{0})$, $w\in \mathcal{W}$.

Now let $w_{0}$ be the principal involution of $\mathcal{W}$.

\begin{proposition}
The orbit $G\cdot (x_{0},\tilde{w}_{0}y_{0})$ is open and dense in $\mathbb{F%
}_{\Theta }\times \mathbb{F}_{\Theta ^{\ast }}$ and identifies to $G/Z_{H}$.
(Here and elsewhere $\tilde{w}$ stands for a representative in $K$ of $w\in 
\mathcal{W}$).
\end{proposition}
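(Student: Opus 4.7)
The plan is to compute the isotropy subgroup of the diagonal action at the point $(x_0, \tilde{w}_0 y_0)$, show it equals $Z_\Theta$, deduce openness of the orbit by a dimension count, and then obtain density as a consequence of the finiteness of the $G$-orbit stratification already noted in the excerpt.

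First I would identify the stabilizer. An element $g\in G$ fixes $(x_0,\tilde w_0 y_0)$ if and only if $g\in P_\Theta$ and $g\in \tilde w_0 P_{\Theta^{\ast}}\tilde w_0^{-1}$. The key identity I would verify is
\begin{equation*}
\mathrm{Ad}(\tilde w_0)\,\mathfrak{p}_{\Theta^{\ast}} \;=\; \mathfrak{p}_\Theta^{-} \;=\; \mathfrak{z}_\Theta \oplus \mathfrak{n}_\Theta^{-},
\end{equation*}
the parabolic opposite to $\mathfrak{p}_\Theta$. This comes from $\mathrm{Ad}(\tilde w_0)\mathfrak{g}_\alpha=\mathfrak{g}_{w_0\alpha}$ and, using $H_{\Theta^{\ast}}=-w_0 H_\Theta$ and $w_0^{-1}=w_0$, the equivalence $(w_0\beta)(H_{\Theta^{\ast}})>0 \Longleftrightarrow \beta(H_\Theta)<0$, which sends $\mathfrak{n}_{\Theta^{\ast}}^{+}$ onto $\mathfrak{n}_\Theta^{-}$; the Levi summand $\mathfrak{z}_{\Theta^{\ast}}$ (the centralizer of $-w_0H_\Theta$) is mapped onto the centralizer of $H_\Theta$, namely $\mathfrak{z}_\Theta$. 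The standard fact that two opposite parabolics meet exactly in their common Levi then gives $P_\Theta\cap \tilde w_0 P_{\Theta^{\ast}}\tilde w_0^{-1} = P_\Theta\cap P_\Theta^{-} = Z_\Theta$.

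Next I would check openness by a tangent space count. Identifying $T_{x_0}\mathbb{F}_\Theta \cong \mathfrak{n}_\Theta^{-}$ and (after translating the second factor by $\tilde w_0$) $T_{\tilde w_0 y_0}\mathbb{F}_{\Theta^{\ast}} \cong \mathfrak{g}/\mathrm{Ad}(\tilde w_0)\mathfrak{p}_{\Theta^{\ast}} \cong \mathfrak{n}_\Theta^{+}$, the infinitesimal action $\mathfrak{g}\to T_{(x_0,\tilde w_0 y_0)}(\mathbb{F}_\Theta\times\mathbb{F}_{\Theta^{\ast}})$ has kernel the isotropy subalgebra $\mathfrak{z}_\Theta$, hence image of dimension
\begin{equation*}
\dim\mathfrak{g}-\dim\mathfrak{z}_\Theta=\dim\mathfrak{n}_\Theta^{-}+\dim\mathfrak{n}_\Theta^{+}=\dim(\mathbb{F}_\Theta\times\mathbb{F}_{\Theta^{\ast}}),
\end{equation*}
so the orbit is open. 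Density then follows for free: the excerpt already established a bijection between $G$-orbits on the product and the finite set of $P_{\Theta^{\ast}}$-orbits (equivalently $\mathcal{W}$-indexed Bruhat-type cells), so the complement of the open orbit is a finite union of $G$-orbits of strictly smaller dimension and therefore has empty interior. Combined with the orbit-stabilizer theorem, the open dense orbit is $G/Z_\Theta=G/Z_H$.

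The main conceptual obstacle is the Ad$(\tilde w_0)$ computation: one must correctly exploit the definitions $\Theta^{\ast}=-w_0\Theta$ and $H_{\Theta^{\ast}}=-w_0 H_\Theta$ together with $w_0^2=1$ to land on the opposite parabolic of $\mathfrak{p}_\Theta$ rather than on $\mathfrak{p}_\Theta$ itself. Everything else is a standard orbit/stabilizer and dimension argument, and independence on the choice of representative $\tilde w_0\in K$ is automatic because different representatives differ by $K_\Theta\subset Z_\Theta$, which stabilizes $y_0$.
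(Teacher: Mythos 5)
Your proof is correct and follows essentially the same route as the paper's: identify the isotropy at $(x_{0},\tilde{w}_{0}y_{0})$ as the intersection of $P_{\Theta }$ with the opposite parabolic (hence $Z_{\Theta }$), get openness from the dimension count $\dim \mathfrak{g}-\dim \mathfrak{z}_{\Theta }=\dim \mathfrak{n}_{\Theta }^{-}+\dim \mathfrak{n}_{\Theta }^{+}$, and get density from the finiteness of the orbit decomposition. The only difference is that you spell out the $\mathrm{Ad}(\tilde{w}_{0})\mathfrak{p}_{\Theta ^{\ast }}=\mathfrak{p}_{\Theta }^{-}$ computation explicitly, which the paper leaves implicit.
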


\begin{proof}
The isotropy subgroup at $(x_{0},\tilde{w}_{0}y_{0})$ is the intersection of
the isotropy subgroups at $x_{0}$ and $w_{0}y_{0}$. The first one is the
parabolic subgroup $P_{-H}$ associated to $\tilde{w}_{0}H^{\ast }=-H$, and
the second one is $P_{H}$, where $H$ is a characteristic element of $\Theta $.
Since $Z_{H}=P_{H}\cap P_{-H}$ the identification follows. Now the Lie
algebra $\mathfrak{z}_{H}=\mathfrak{p}_{H}\cap \mathfrak{p}_{-H}$ of $%
P_{H}\cap P_{-H}$ is complemented in $\mathfrak{g}$ by $\mathfrak{n}%
_{H}^{+}\cap \mathfrak{n}_{-H}^{+}$, with $\mathfrak{n}_{-H}^{+}=\sum_{%
\alpha \left( H\right) <0}\mathfrak{g}_{\alpha }$. Hence, the dimension of $%
G\cdot (x_{0},\tilde{w}_{0}y_{0})$ is the same as the dimension of $\mathbb{F%
}_{\Theta }\times \mathbb{F}_{\Theta ^{\ast }}$, so that the orbit is open.
An analogous reasoning shows that this is the only open orbit and hence
dense.
\end{proof}

In terms of this realization of $G/Z_{\Theta }$ as an open orbit, the map $%
G/Z_{\Theta }\rightarrow \mathbb{F}_{\Theta }$ is just the projection onto
the first factor. Also, if $\Theta \subset \Theta _{1}$ the projection $%
G/Z_{\Theta }\rightarrow G/Z_{\Theta _{1}}$ is inherited from the
projections $\mathbb{F}_{\Theta }\rightarrow \mathbb{F}_{\Theta _{1}}$ and $%
\mathbb{F}_{\Theta ^{\ast }}\rightarrow \mathbb{F}_{\Theta _{1}^{\ast }}$.

A flag manifold $\mathbb{F}_{\Theta }=G/P_{\Theta }$ is in bijection with
the set of parabolic subalgebras conjugate to $\mathfrak{p}_{\Theta }$,
since $P_{\Theta }$ is the normalizer of $\mathfrak{p}_{\Theta }$. From this
point of view the open orbit $G\cdot (x_{0},\tilde{w}_{0}y_{0})\subset 
\mathbb{F}_{\Theta }\times \mathbb{F}_{\Theta ^{\ast }}$ is characterized by
transversality: Two parabolic subalgebras $\mathfrak{p}_{1}\in \mathbb{F}%
_{\Theta }$ and $\mathfrak{p}_{2}\in \mathbb{F}_{\Theta ^{\ast }}$ are
transversal if $\mathfrak{g}=\mathfrak{p}_{1}+\mathfrak{p}_{2}$, or
equivalently if $\mathfrak{n}\left( \mathfrak{p}_{1}\right) \cap \mathfrak{p}%
_{2}=\mathfrak{p}_{1}\cap \mathfrak{n}\left( \mathfrak{p}_{2}\right) =\{0\}$%
, where $\mathfrak{n}\left( \cdot \right) $ stands for the nilradical (see 
\cite{smmax}). Then the open orbit $G\cdot (x_{0},\tilde{w}_{0}y_{0})$ is
the set of pairs of transversal subalgebras. In particular, the set of
subalgebras transversal to the origin $x_{0}\in \mathbb{F}_{\Theta }$ is the
open cell $N^{+}\tilde{w}_{0}y_{0}$ with $y_{0}$ the origin of $\mathbb{F}%
_{\Theta ^{\ast }}$. More generally the set of subalgebras transversal to $%
gx_{0}$, $g\in G$, is the open cell $gN^{+}\tilde{w}_{0}x_{0}$.

The fixed points of a split-regular element $h\in A^{+}=\exp \mathfrak{a}%
^{+} $ in a flag manifold $\mathbb{F}_{\Theta }$ are isolated. The set of
fixed points is the orbit through the origin of $\mathrm{Norm}_{K}\left( 
\mathfrak{a}\right) $ and factors down to the Weyl group $\mathcal{W}=%
\mathrm{Norm}_{K}\left( \mathfrak{a}\right) /\mathrm{Cent}_{K}\left( 
\mathfrak{a}\right) $.

\section{Adjoint orbits and representations of $\mathfrak{g}$ \label%
{secrepre}}

In this section we give realizations of the coset spaces $G/Z_{\Theta }$
based on representations of $\mathfrak{g}$. It will be convenient to assume
that $\mathfrak{g}$ is a complex algebra, even though the theory works, with
some adaptations, for real algebras. This new description helps to establish
a bridge between the adjoint orbit and the open orbit in the product.

\subsection{The adjoint action of $G$ on $\mathrm{End}(V)$}

Let $\mathfrak{h}$ be a Cartan subalgebra of $\mathfrak{g}$ and denote by $%
\mathfrak{h}_{\mathbb{R}}$ the real subspace of $\mathfrak{h}$ spanned by $%
H_{\alpha }$, $\alpha \in \Pi $, where $\alpha \left( \cdot \right) =\langle
H_{\alpha },\cdot \rangle $. Fix a Weyl chamber $\mathfrak{h}_{\mathbb{R}%
}^{+}$ and let $\Sigma =\{\alpha _{1},\ldots ,\alpha _{l}\}$ be the
corresponding system of simple roots. The fundamental weights $\{\mu
_{1},\ldots ,\mu _{l}\}$ are defined by 
\begin{equation*}
\langle \alpha _{i}^{\vee },\mu _{j}\rangle =\frac{2\langle \alpha _{i},\mu
_{j}\rangle }{\langle \alpha _{i},\alpha _{i}\rangle }=\delta _{ij}.
\end{equation*}%
If $\mu =a_{1}\mu _{1}+\cdots +a_{l}\mu _{l}$ with $a_{i}\in \mathbb{N}$,
then there exists a unique irreducible representation $\rho _{\mu }$ of $%
\mathfrak{g}$ with highest weight $\mu $. If $V=V\left( \mu \right) $ is the
representation space, then $V$ decomposes into weight spaces (the
simultaneous eigenspaces for $\rho _{\mu }\left( H\right) $, $H\in \mathfrak{%
h}$)%
\begin{equation*}
V=\sum_{\nu }V_{\nu }.
\end{equation*}%
The highest weight space $V_{\mu }$ has dimension $1$ and is characterized
by the fact that $\rho _{\mu }\left( X\right) v=0$ if $v\in V_{\mu }$ and $%
X\in \sum_{\alpha >0}\mathfrak{g}_{\alpha }$. The remaining weights take the
form $\nu =\mu -\left( n_{1}\alpha _{1}+\cdots +n_{l}\alpha _{l}\right) $
with $n_{i}\in \mathbb{N}$. Thus, if $H\in \mathfrak{h}_{\mathbb{R}}^{+}$
then $\mu \left( H\right) $ is the largest eigenvalue of $\rho _{\mu }\left(
H\right) $. The set of weights of the representation is invariant by the
Weyl group. If $w_{0}$ is the main involution, then $w_{0}\mu $ is a lowest
weight, that is, $\left( w_{0}\mu \right) \left( H\right) =\mu \left(
w_{0}H\right) $ is the smallest eigenvalue of $\rho _{\mu }\left( H\right) $
if $H\in \mathfrak{h}_{\mathbb{R}}^{+}$.

If $K\subset G$ is the maximal compact subgroup, then $V$ can be endowed
with a $K$-invariant Hermitian form $\left( \cdot ,\cdot \right) ^{\mu }$
such that the weight spaces are pairwise orthogonal. Such a Hermitian form
is unique up to scale, because the representation of $K$ on $V$ is
irreducible.

In section \ref{compact} we study Lagrangean submanifolds of adjoint orbits $%
\mathrm{Ad}\left( G\right) H_{0}$ with $H_{0}\in \mathrm{cl}\left( \mathfrak{%
h}_{\mathbb{R}}^{+}\right) $, embedded into products $\mathbb{F}%
_{H_{0}}\times \mathbb{F}_{H_{0}^\ast}$. There is a freedom of choice to
pick the element $H_{0} $, producing the same flag $\mathbb{F}_{H_{0}}$. In
what follows we will choose a convenient $H_{0}$.

Let $\Theta _{0}=\Theta \left( H_{0}\right) =\{\alpha \in \Sigma :\alpha
\left( H_{0}\right) =0\}$, that is, $H_{0}$ is characteristic for $\Theta
_{0}$. Let $\mu $ be a highest weight such that, for $\alpha \in \Sigma $, $%
\langle \alpha ^{\vee },\mu \rangle =0$ if and only if $\alpha \in \Theta
_{0}$. (For example, $\mu =\mu _{i_{1}}+\cdots +\mu _{is}$ if $\Sigma
\setminus \Theta _{0}=\{\alpha _{i_{1}},\ldots ,\alpha _{i_{s}}\}$.) Define $%
H_{\mu }\in \mathfrak{h}_{\mathbb{R}}$ by $\mu \left( \cdot \right) =\langle
H_{\mu }\,\cdot \rangle $. Then, the centralizers of $H_{\mu }$ and $H_{0}$
coincide, since $\Theta _{0}$ is the set of simple roots that vanish on $%
H_{0}$ as well as on $H_{\mu }$. Hence the adjoint orbits $\mathrm{Ad}\left(
G\right) H_{\mu }$ and $\mathrm{Ad}\left( G\right) H_{0}$ give rise to the
same homogeneous space $G/Z_{H_{0}}=G/Z_{H_{\mu }}$ and the flags $\mathbb{F}%
_{H_{0}}$ and $\mathbb{F}_{H_{\mu }}$ coincide.

From now on we take $H_{0}=H_{\mu }$ with $\mu $ a highest weight, $\mu =\mu
_{i_{1}}+\cdots +\mu _{is}$.

Let $G$ be the linear connected group with Lie algebra $\rho _{\mu }\left( 
\mathfrak{g}\right) \approx \mathfrak{g}$ and consider its action on the
projetive space $\mathbb{P}\left( V\right) $ of the representation space $%
V=V\left( \mu \right) $. It is well known that this choice of $\mu $
guaranties that the projective orbit of $G$ by the subspace of highest
weight $V_{\mu }\in \mathbb{P}\left( V\right) $ is the flag $\mathbb{F}%
_{H_{\mu }}=\mathbb{F}_{\Theta _{0}}$.

Consider now the dual representations $\rho _{\mu }^{\ast }$ of $\mathfrak{g}
$ and $G$ on $V^{\ast }$ as $\rho _{\mu }^{\ast }\left( X\right) \left(
\varepsilon \right) =-\varepsilon \circ \rho _{\mu }\left( X\right) $ and $%
\rho _{\mu }^{\ast }\left( g\right) \left( \varepsilon \right) =\varepsilon
\circ \rho _{\mu }\left( g^{-1}\right) $ if $\varepsilon \in V^{\ast }$, $%
X\in \mathfrak{g}$ and $g\in G$. Choose a basis $\{v_{0},\ldots ,v_{N}\}$ de 
$V$ adapted to the decomposition in weight spaces with $v_{0}\in V_{\mu }$.
Denote by $\{\varepsilon _{0},\ldots ,\varepsilon _{N}\}$ the dual basis $%
\varepsilon _{i}\left( v_{j}\right) =\delta _{ij}$. Then $\varepsilon _{0}$
generates a subspace of \textquotedblleft lowest \textquotedblright\ weight
of $V^{\ast }$, in the sense that

\begin{enumerate}
\item $\rho _{\mu }^{\ast }\left( H\right) \left( \varepsilon _{0}\right)
=-\mu \left( H\right) \varepsilon _{0}$ if $H\in \mathfrak{h}$. Indeed, if
the basis element $v_{i}\in V_{\nu }$, then 
\begin{equation*}
\rho _{\mu }^{\ast }\left( H\right) \left( \varepsilon _{0}\right) \left(
v_{i}\right) =-\varepsilon _{0}\circ \rho _{\mu }\left( H\right) \left(
v_{i}\right) =-\nu \left( H\right) \varepsilon _{0}\left( v_{i}\right) =-\nu
\left( H\right) \delta _{0i}.
\end{equation*}

\item $\rho _{\mu }^{\ast }\left( X\right) \left( \varepsilon _{0}\right) =0$
if $X\in \sum_{\alpha <0}\mathfrak{g}_{\alpha }$, since $\rho _{\mu }^{\ast
}\left( H\right) \left( \varepsilon _{0}\right) \left( v_{i}\right)
=-\varepsilon _{0}\left( \rho _{\mu }\left( X\right) \right) v_{i}$ and $%
\rho _{\mu }\left( X\right) $ takes a weight space $V_{\nu }$ to the sum of
spaces of weights smaller than $\nu $.
\end{enumerate}

Therefore, $-\mu $ is the lowest weight of $V^{\ast }$. So, the highest
weight is $\mu ^{\ast }=-w_{0}\mu $. This means that the projective orbit of
the highest weight (and of $\varepsilon _{0}$) on $V^{\ast }$ is the dual
flag $\mathbb{F}_{H_{\mu }^{\ast }}$.

\begin{example}
If $\mathfrak{g}=\mathfrak{sl}\left( n,\mathbb{C}\right) $ then the
fundamental weights are $\lambda _{1}$, $\lambda _{1}+\lambda _{2}$, \ldots ,%
$\lambda _{1}+\cdots +\lambda _{n-1}$, where $\lambda _{i}$ is the
functional that associates the $i$-th eigenvalue of the diagonal matrix $%
H\in \mathfrak{h}$. If $\mu $ is a fundamental weight $\mu =\lambda
_{1}+\cdots +\lambda _{k}$ then the irreducible representation with highest
weight $\mu $ is the representation of $\mathfrak{g}$ on the $k$-th exterior
power $\Lambda ^{k}\mathbb{C}^{n}$ of $\mathbb{C}^{n}$. The highest weight
space is generated by $e_{1}\wedge \cdots \wedge e_{k}$ ($e_{i}$ are the
basis vectors of $\mathbb{C}^{n}$). The $G$-orbit of $e_{1}\wedge \cdots
\wedge e_{k}$ is the set of decomposable elements of $\Lambda ^{k}\mathbb{C}%
^{n}$, so the projective $G$-orbit is identified to the Grassmannian $%
\mathrm{Gr}_{k}\left( n\right) $. The dual flag of $\mathrm{Gr}_{k}\left(
n\right) $ is $\mathrm{Gr}_{n-k}\left( n\right) $ which is the projective
orbit on $\Lambda ^{n-k}\mathbb{C}^{n}$, identified to the dual $\Lambda ^{k}%
\mathbb{C}^{n}$ by a choice of volume form on $\mathbb{C}^{n}$. The lowest
weight space on $\Lambda ^{n-k}\mathbb{C}^{n}$ is generated by $%
e_{k+1}\wedge \cdots \wedge e_{n}$.
\end{example}

Keeping the same highest weight $\mu $, consider the tensor product $%
V\otimes V^{\ast }$ which is isomorphic to the space of endomorphisms $%
\mathrm{End}\left( V\right) $ of $V$. 
The group $G$ gets represented on $V\otimes V^{\ast }$ by $g\cdot \left(
v\otimes \varepsilon \right) =\rho _{\mu }\left( g\right) v\otimes \rho
_{\alpha }^{\ast }\left( g\right) \varepsilon $, which is isomorphic to the
adjoint representation of $G$ on $\mathrm{End}\left( V\right) $.

Once again, let $v_{0}$ and $\varepsilon _{0}$ be generators of the spaces
of highest weight of $V$ and lowest of $V^{\ast }$, respectively. With this
notation, our fourth model of the adjoint orbit is the $G$-orbit of $%
v_{0}\otimes \varepsilon _{0}$. To prove that this orbit is indeed $%
G/Z_{H_{0}}$ we shall consider the moment map of the representation. Namely,
the map $\overline{M}\colon V\otimes V^{\ast }\rightarrow \mathfrak{g}^{\ast
}$ defined by 
\begin{equation*}
\overline{M}\left( v\otimes \varepsilon \right) \left( Z\right) =\varepsilon
\left( \rho _{\mu }\left( Z\right) v\right) \qquad v\in V,~\varepsilon \in
V^{\ast },~Z\in \mathfrak{g}.
\end{equation*}%
Since $\mathfrak{g}$ is semi-simple, $\mathfrak{g}\approx \mathfrak{g}^{\ast
}$ via the Cartan-Killing form $\langle \cdot ,\cdot \rangle $ and we can
take the moment map $M\colon V\otimes V^{\ast }\rightarrow \mathfrak{g}$
given by 
\begin{equation*}
\langle M\left( v\otimes \varepsilon \right) ,Z\rangle =\varepsilon \left(
\rho _{\mu }\left( Z\right) v\right) \qquad v\in V,~\varepsilon \in V^{\ast
},~Z\in \mathfrak{g}.
\end{equation*}%
It is well known and easy to prove that $M$ is equivariant with respect to
the representations on $V\otimes V^{\ast }$ and $\mathfrak{g}$. In fact,
since $\rho _{\mu }\left( \mathrm{Ad}\left( g\right) Z\right) =\rho _{\mu
}\left( g\right) \rho _{\mu }\left( Z\right) \rho _{\mu }\left(
g^{-1}\right) $ we have 
\begin{eqnarray*}
\langle \mathrm{Ad}\left( g\right) M\left( v\otimes \varepsilon \right)
,Z\rangle &=&\langle \mathrm{Ad}\left( g\right) M\left( v\otimes \varepsilon
\right) ,\mathrm{Ad}\left( g^{-1}\right) Z\rangle \\
&=&\varepsilon \left( \rho _{\mu }\left( g^{-1}\right) \rho _{\mu }\left(
Z\right) \rho _{\mu }\left( g\right) v\right) \\
&=&\rho _{\mu }\left( g\right) v\otimes \rho _{\mu }^{\ast }\left( g\right)
\varepsilon =g\cdot \left( v\otimes \varepsilon \right) .
\end{eqnarray*}%
The same calculation shows that $\overline{M}$ is equivariant with respect
to the co-adjoint representation.

In the semi-simple case the moment map has the following geometric
interpretation: $\rho _{\mu }$ is a faithful representation, thus $\mathfrak{%
g}\approx \rho _{\mu }\left( \mathfrak{g}\right) \subset \mathrm{End}\left(
V\right) $. The trace form $\mathrm{tr}\left( AB\right) $ on $\mathrm{End}%
\left( V\right) $ is non-degenerate. Then the moment map is just the
orthogonal projection with respect to the trace form of $\mathrm{End}\left(
V\right) \approx V\otimes V^{\ast }$ onto $\rho _{\mu }\left( \mathfrak{g}%
\right) \approx \mathfrak{g}$.

As a consequence of equivariance, it follows that the image of a $G $-orbit
on $V\otimes V^{\ast }$ by $M$ is an adjoint orbit.

\begin{lemma}
The image of the $G$-orbit $G\cdot \left( v_{0}\otimes \varepsilon
_{0}\right) $ by $M$ is the adjoint orbit of $H_{\mu }$ defined by $\mu
\left( \cdot \right) =\langle H_{\mu },\cdot \rangle $. (That is, the image
by $\overline{M}$ of $G\cdot \left( v_{0}\otimes \varepsilon _{0}\right) $
is the linear functional on $\mathfrak{g}^{\ast }$ that coincides with $\mu $
on $\mathfrak{h}$ and vanishes on the sum of root spaces.)
\end{lemma}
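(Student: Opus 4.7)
The plan is to use equivariance to reduce the statement to computing $M$ at the single point $v_0\otimes\varepsilon_0$, and then to verify that $M(v_0\otimes\varepsilon_0)=H_\mu$ by testing against the decomposition $\mathfrak{g}=\mathfrak{h}\oplus\sum_{\alpha>0}\mathfrak{g}_\alpha\oplus\sum_{\alpha<0}\mathfrak{g}_\alpha$. Once $M(v_0\otimes\varepsilon_0)=H_\mu$ is established, equivariance of $M$ with respect to the $G$-action on $V\otimes V^\ast$ and the adjoint action on $\mathfrak{g}$ (already proved in the lines just before the statement) immediately gives $M(G\cdot(v_0\otimes\varepsilon_0))=\mathrm{Ad}(G)H_\mu$, which is the claim.

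For the pointwise computation, I would test the defining identity $\langle M(v_0\otimes\varepsilon_0),Z\rangle=\varepsilon_0(\rho_\mu(Z)v_0)$ separately for $Z\in\mathfrak{h}$, $Z\in\mathfrak{g}_\alpha$ with $\alpha>0$, and $Z\in\mathfrak{g}_\alpha$ with $\alpha<0$. If $Z=H\in\mathfrak{h}$, then $v_0$ being a highest weight vector of weight $\mu$ gives $\rho_\mu(H)v_0=\mu(H)v_0$, so $\varepsilon_0(\rho_\mu(H)v_0)=\mu(H)=\langle H_\mu,H\rangle$. If $Z\in\mathfrak{g}_\alpha$ with $\alpha>0$, the highest weight condition $\rho_\mu(X)v_0=0$ forces $\varepsilon_0(\rho_\mu(Z)v_0)=0$; on the other side $\langle H_\mu,Z\rangle=0$ because $H_\mu\in\mathfrak{h}$ and $\mathfrak{h}$ is Cartan--Killing orthogonal to every root space. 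If $Z\in\mathfrak{g}_\alpha$ with $\alpha<0$, then $\rho_\mu(Z)v_0\in V_{\mu+\alpha}$, which is a weight space different from $V_\mu$; since the basis $\{v_0,\dots,v_N\}$ is adapted to the weight decomposition with $v_0\in V_\mu$, the functional $\varepsilon_0$ vanishes on every $V_\nu$ with $\nu\neq\mu$, so again $\varepsilon_0(\rho_\mu(Z)v_0)=0=\langle H_\mu,Z\rangle$. Combining the three cases and using linearity gives $\langle M(v_0\otimes\varepsilon_0),Z\rangle=\langle H_\mu,Z\rangle$ for all $Z\in\mathfrak{g}$, hence $M(v_0\otimes\varepsilon_0)=H_\mu$ by non-degeneracy of $\langle\cdot,\cdot\rangle$.

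I do not expect any serious obstacle: the only small care needed is to make sure the basis $\{v_i\}$ is chosen so that each weight space of $V$ is spanned by a subset of it (so that $\varepsilon_0$ really annihilates every non-highest weight space), which is exactly what the phrase ``adapted to the decomposition in weight spaces'' encodes. The parenthetical statement about $\overline{M}$ is then a direct transcription: via $\langle\cdot,\cdot\rangle$ the element $H_\mu\in\mathfrak{h}$ corresponds to the functional on $\mathfrak{g}^\ast$ that agrees with $\mu$ on $\mathfrak{h}$ and vanishes on each $\mathfrak{g}_\alpha$, exactly as claimed.
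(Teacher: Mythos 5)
Your proposal is correct and follows essentially the same route as the paper: establish $M(v_0\otimes\varepsilon_0)=H_\mu$ by testing against $\mathfrak{h}$ and the root spaces, then invoke equivariance of $M$. The only cosmetic difference is that for $Z\in\mathfrak{g}_\alpha$ with $\alpha<0$ you argue directly that $\rho_\mu(Z)v_0$ lies in a weight space annihilated by $\varepsilon_0$, whereas the paper routes this through the previously established fact that $\rho_\mu^*(Z)\varepsilon_0=0$ — the same observation in dual form.
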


\begin{proof}
If $\alpha $ is a root and $X\in \mathfrak{g}_{\alpha }$ then 
\begin{equation*}
\varepsilon _{0}\left( \rho _{\mu }\left( X\right) v_{0}\right) =\left( \rho
_{\mu }\left( X\right) v_{0}\right) \otimes \varepsilon _{0}=-v_{0}\otimes
\left( \rho _{\mu }^{\ast }\left( X\right) \varepsilon _{0}\right) .
\end{equation*}%
The second term vanishes if $\alpha >0$ whereas if $\alpha <0$ the third
term vanishes. Hence $\langle M\left( \varepsilon _{0}\otimes v_{0}\right)
,X\rangle =0$. But, if $H\in \mathfrak{h}$ then 
\begin{equation*}
\varepsilon _{0}\left( \rho _{\mu }\left( H\right) v_{0}\right) =\mu \left(
H\right) \varepsilon _{0}\left( v_{0}\right) =\mu \left( H\right) ,
\end{equation*}%
that is, $\langle M\left( \varepsilon _{0}\otimes v_{0}\right) ,H\rangle
=\mu \left( H\right) $ which shows that $M\left( \varepsilon _{0}\otimes
v_{0}\right) =H_{\mu }$. Consequently, \newline
$M\left( G\cdot \left( v_{0}\otimes \varepsilon _{0}\right) \right) =\mathrm{%
Ad}\left( G\right) H_{\mu }$.
\end{proof}

\begin{proposition}
The $G$-orbit $G\cdot \left( v_{0}\otimes \varepsilon _{0}\right) $ is the
homogeneous space $G/Z_{H_{\mu }}$.
\end{proposition}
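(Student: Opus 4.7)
The plan is to compute the stabilizer $L \subseteq G$ of $v_0 \otimes \varepsilon_0$ under the tensor representation on $V \otimes V^{\ast }$ and to show $L = Z_{H_\mu}$; once this is established, the orbit-stabilizer correspondence gives $G \cdot (v_0 \otimes \varepsilon_0) \cong G/Z_{H_\mu}$.

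First I would check that $L \subseteq Z_{H_\mu}$. By the equivariance of the moment map $M$ and the computation $M(v_0 \otimes \varepsilon_0) = H_\mu$ from the preceding lemma, any $g \in L$ satisfies
\[
\mathrm{Ad}(g) H_\mu = \mathrm{Ad}(g)\, M(v_0 \otimes \varepsilon_0) = M\bigl(g \cdot (v_0 \otimes \varepsilon_0)\bigr) = M(v_0 \otimes \varepsilon_0) = H_\mu,
\]
so $g \in Z_{H_\mu}$. For the reverse inclusion, I would use that $Z_{H_\mu}$ is the common Levi factor of the opposite parabolics $P_{H_\mu}$ and $P_{-H_\mu}$. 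The highest weight line $\mathbb{C} v_0 \subset V$ is stable under $P_{H_\mu}$, so that subgroup acts on it through a multiplicative character $\chi$; in particular $\rho_\mu(g)\, v_0 = \chi(g) v_0$ for every $g \in Z_{H_\mu}$. Dually, the lowest weight line $\mathbb{C}\varepsilon_0 \subset V^{\ast}$ is stable under $P_{-H_\mu}$ and gives a character $\chi'$ with $\rho_\mu^{\ast}(g)\,\varepsilon_0 = \chi'(g)\varepsilon_0$ for $g \in Z_{H_\mu}$. Evaluating at $v_0$ and using $\varepsilon_0(v_0) = 1$ yields
\[
\chi'(g) = \bigl(\rho_\mu^{\ast}(g)\varepsilon_0\bigr)(v_0) = \varepsilon_0\bigl(\rho_\mu(g^{-1}) v_0\bigr) = \chi(g)^{-1},
\]
so $g \cdot (v_0 \otimes \varepsilon_0) = \chi(g)\chi'(g)(v_0 \otimes \varepsilon_0) = v_0 \otimes \varepsilon_0$, proving $Z_{H_\mu} \subseteq L$.

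The only substantive point is the reciprocity $\chi' = \chi^{-1}$, which is forced by the duality pairing together with the fact that $Z_{H_\mu}$ sits simultaneously inside $P_{H_\mu}$ and $P_{-H_\mu}$; everything else is a routine unpacking of the equivariance of $M$ already proven above. Combining the two inclusions yields $L = Z_{H_\mu}$, whence the orbit $G \cdot (v_0 \otimes \varepsilon_0)$ is canonically identified with $G/Z_{H_\mu}$.
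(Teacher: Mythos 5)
Your proposal is correct, and the first half (the inclusion $L\subseteq Z_{H_{\mu }}$ via equivariance of $M$ and $M\left( v_{0}\otimes \varepsilon _{0}\right) =H_{\mu }$) is exactly the paper's argument. For the reverse inclusion you take a genuinely different route. The paper works infinitesimally: it writes out the isotropy subalgebras of $v_{0}$ and of $\varepsilon _{0}$ explicitly as sums of root spaces, checks term by term that $\mathfrak{z}_{H_{\mu }}=\mathfrak{h}\oplus \sum_{\alpha \in \langle \Theta _{0}\rangle ^{\pm }}\mathfrak{g}_{\alpha }$ annihilates $v_{0}\otimes \varepsilon _{0}$ (the $\mathfrak{h}$-contributions $\mu \left( H\right) $ and $-\mu \left( H\right) $ cancel), and then passes from the Lie algebra to the group by invoking the connectedness of $Z_{H_{\mu }}$. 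You instead argue at the group level: since $Z_{H_{\mu }}=P_{H_{\mu }}\cap P_{-H_{\mu }}$ (established in Section \ref{secsplit}) and the two parabolics stabilize the lines $\mathbb{C}v_{0}$ and $\mathbb{C}\varepsilon _{0}$ respectively, $Z_{H_{\mu }}$ acts on each line by a character, and the duality pairing $\varepsilon _{0}\left( v_{0}\right) =1$ forces the two characters to be mutually inverse, so the tensor is fixed. Your version buys independence from the connectedness of $Z_{H_{\mu }}$ (which the paper gets for free only because $\mathfrak{g}$ is assumed complex in this section) and avoids the root-space bookkeeping; the paper's version buys an explicit description of the isotropy subalgebras of $v_{0}$ and $\varepsilon _{0}$, which is reused in the surrounding discussion. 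Both are complete proofs.
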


\begin{proof}
Set $G\cdot \left( v_{0}\otimes \varepsilon _{0}\right) =G/L$. We want to
show that $L=Z_{H_{\mu }}$. The equivariance of $M$ together with the
equality $M\left( G\cdot \left( v_{0}\otimes \varepsilon _{0}\right) \right)
=\mathrm{Ad}\left( G\right) H_{\mu }$ imply that the isotropy subgroup at $%
v_{0}\otimes \varepsilon _{0}$ is contained in the isotropy subgroup at $%
H_{\mu }$, that is, $L\subset Z_{H_{\mu }}$. Since $Z_{H_{\mu }}$ is
connected, to show the opposite inclusion it suffices to show that the Lie
algebra $\mathfrak{z}_{H_{\mu }}$ of $Z_{H_{\mu }}$ is contained in the
isotropy algebra of $v_{0}\otimes \varepsilon _{0}$.

To verify this, we observe that the isotropy algebra of $v_{0}$ is $\ker \mu
+\sum_{\alpha >0}\mathfrak{g}_{\alpha }+\sum_{\alpha \in \langle \Theta
_{0}\rangle ^{-}}\mathfrak{g}_{\alpha }$, where $\langle \Theta _{0}\rangle
^{-}$ is the set of negative roots generated by $\Theta _{0}$, which in turn
is the set of simple roots that vanish on $H_{0}$ (or $H_{\mu }$). In this
sum, the first term is given by elements $H\in \mathfrak{h}$ such that $\rho
_{\mu }\left( H\right) v_{0}=0$. The second term appears in the isotropy
algebra because $v_{0}$ is a highest weight vector. Finally the last term
comes from the fact that if $\alpha $ is a negative root and $X\in \mathfrak{%
g}_{\alpha }$, then $\rho _{\mu }\left( X\right) v_{0}=0$ if and only if $%
\langle \alpha ^{\vee },\mu \rangle =0$. The roots that satisfy this
equality are precisely the roots in $\langle \Theta _{0}\rangle ^{-}$.
Analogously, the isotropy algebra at $\varepsilon _{0}$ is given by $\ker
\mu +\sum_{\alpha <0}\mathfrak{g}_{\alpha }+\sum_{\alpha \in \langle \Theta
_{0}\rangle ^{+}}\mathfrak{g}_{\alpha }$ where $\langle \Theta _{0}\rangle
^{+}$ is the set of positive roots generated by $\Theta _{0}$.

Now, set $X\in \mathfrak{z}_{H_{\mu }}=\mathfrak{h}\oplus \sum_{\alpha \in
\langle \Theta _{0}\rangle ^{\pm }}\mathfrak{g}_{\alpha }$. If $X\in
\sum_{\alpha \in \langle \Theta _{0}\rangle ^{\pm }}\mathfrak{g}_{\alpha }$
then $\rho _{\mu }\left( X\right) v_{0}\otimes \varepsilon _{0}+v_{0}\otimes
\rho _{\mu }^{\ast }\left( X\right) \varepsilon _{0}=0$ since $X$ belongs to
the isotropy algebras of $v_{0}$ and $\varepsilon _{0}$. Whereas if $H\in 
\mathfrak{h}$ then 
\begin{equation*}
\rho _{\mu }\left( H\right) v_{0}\otimes \varepsilon _{0}+v_{0}\otimes \rho
_{\mu }^{\ast }\left( H\right) \varepsilon _{0}=\mu \left( H\right)
v_{0}\otimes \varepsilon _{0}-\mu \left( H\right) v_{0}\otimes \varepsilon
_{0}=0.
\end{equation*}%
Therefore, $\mathfrak{z}_{H_{\mu }}$ is contained in the isotropy subalgebra
of $v_{0}\otimes \varepsilon _{0}$.
\end{proof}

\begin{corollary}
The restriction of the moment map defines a diffeomorphism $M\colon G\cdot
\left( v_{0}\otimes \varepsilon _{0}\right) \rightarrow \mathrm{Ad}\left(
G\right) H_{\mu }$.
\end{corollary}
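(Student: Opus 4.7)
The plan is that at this point essentially no work remains: the three preceding results combine almost mechanically. The proposition identifies the orbit $G\cdot(v_0\otimes\varepsilon_0)$ with $G/Z_{H_\mu}$, and the adjoint orbit is also $G/Z_{H_\mu}$, while the lemma gives surjectivity $M(G\cdot(v_0\otimes\varepsilon_0))=\mathrm{Ad}(G)H_\mu$ together with the base-point identity $M(v_0\otimes\varepsilon_0)=H_\mu$.

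First I would verify injectivity by an equivariance-plus-isotropy argument. Suppose $M(g\cdot(v_0\otimes\varepsilon_0))=M(h\cdot(v_0\otimes\varepsilon_0))$. By equivariance of $M$ this reads $\mathrm{Ad}(g)H_\mu=\mathrm{Ad}(h)H_\mu$, so $h^{-1}g\in Z_{H_\mu}$. But the preceding proposition shows that $Z_{H_\mu}$ is exactly the isotropy subgroup of $v_0\otimes\varepsilon_0$, hence $h^{-1}g$ also fixes $v_0\otimes\varepsilon_0$, giving $g\cdot(v_0\otimes\varepsilon_0)=h\cdot(v_0\otimes\varepsilon_0)$.

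Next I would handle the smooth structure. The map $M\colon V\otimes V^{\ast}\to \mathfrak{g}$ is polynomial (it comes from the representation $\rho_\mu$) and is $G$-equivariant. Both the orbit $G\cdot(v_0\otimes\varepsilon_0)\subset V\otimes V^{\ast}$ and the adjoint orbit $\mathrm{Ad}(G)H_\mu\subset\mathfrak{g}$ are smooth homogeneous $G$-manifolds with the same (closed) isotropy subgroup $Z_{H_\mu}$, and the restriction of $M$ intertwines the two canonical identifications with $G/Z_{H_\mu}$. Concretely, the diagrams
\begin{equation*}
\begin{array}{ccc}
G/Z_{H_\mu} & \xrightarrow{\;\cong\;} & G\cdot(v_0\otimes\varepsilon_0) \\
\Big\downarrow\mathrm{id} & & \Big\downarrow M \\
G/Z_{H_\mu} & \xrightarrow{\;\cong\;} & \mathrm{Ad}(G)H_\mu
\end{array}
\end{equation*}
commute, where the horizontal arrows are the standard diffeomorphisms $gZ_{H_\mu}\mapsto g\cdot(v_0\otimes\varepsilon_0)$ and $gZ_{H_\mu}\mapsto \mathrm{Ad}(g)H_\mu$. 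Thus $M$ is, up to those two diffeomorphisms, the identity on $G/Z_{H_\mu}$, so it is itself a diffeomorphism.

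I do not expect any real obstacle: the equivariance and image calculations are already in hand, injectivity is automatic from the matching isotropy subgroups, and smoothness of both $M$ and its inverse is inherited from the standard homogeneous-space structure. The only care required is to invoke explicitly the identification of orbits with coset spaces and the fact that an equivariant bijection between two homogeneous $G$-spaces with identical stabilizers is automatically a diffeomorphism.
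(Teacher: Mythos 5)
Your argument is correct and is exactly the one the paper intends (the corollary is left without an explicit proof, but the same pattern appears in item (6) of the proof of Theorem \ref{propmomento}: equivariance, surjectivity onto the adjoint orbit, and coincidence of the isotropy subgroups force an equivariant bijection between homogeneous $G$-spaces, hence a diffeomorphism). No gaps.
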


Via this diffeomorphism, the height function $f_{H}\colon \mathrm{Ad}\left(
G\right) H_{\mu }\rightarrow \mathbb{C}$ defines a function, also denoted by 
$f_{H}$, on the orbit $G\cdot \left( v_{0}\otimes \varepsilon _{0}\right) $.
This function has a simple expression.

\begin{proposition}
Let $v\otimes \varepsilon \in G\cdot \left( v_{0}\otimes \varepsilon
_{0}\right) $. Then $f_{H}\left( v\otimes \varepsilon \right) =\varepsilon
\left( \rho _{\mu }\left( H\right) v\right) =\mathrm{tr}\left( \left(
v\otimes \varepsilon \right) \rho _{\mu }\left( H\right) \right) $.
\end{proposition}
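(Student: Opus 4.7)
The plan is to unwind the definition of $f_H$ through the diffeomorphism $M$ and then perform a one-line trace computation. By construction in the earlier part of the section, the height function on the adjoint orbit is $f_H(x) = \langle x, H \rangle$ (the Cartan--Killing pairing with $H$), and we transport it to the orbit $G \cdot (v_0 \otimes \varepsilon_0)$ via the diffeomorphism $M \colon G \cdot (v_0 \otimes \varepsilon_0) \rightarrow \mathrm{Ad}(G) H_\mu$ of the previous corollary. So by definition of the transported function,
\begin{equation*}
f_H(v \otimes \varepsilon) = \langle M(v \otimes \varepsilon), H \rangle.
\end{equation*}

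The first equality then follows immediately from the defining formula of the moment map, which reads $\langle M(v \otimes \varepsilon), Z \rangle = \varepsilon(\rho_\mu(Z) v)$ for all $Z \in \mathfrak{g}$; specializing to $Z = H$ gives $f_H(v \otimes \varepsilon) = \varepsilon(\rho_\mu(H) v)$. No additional structure is needed here beyond what has already been established.

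For the second equality, I would interpret $v \otimes \varepsilon$ as the rank-one endomorphism of $V$ sending $w \mapsto \varepsilon(w) v$. Then $(v \otimes \varepsilon) \circ \rho_\mu(H)$ is the endomorphism $w \mapsto \varepsilon(\rho_\mu(H) w)\, v$. Choosing any basis $\{e_i\}$ of $V$ with dual basis $\{e_i^\ast\}$ and writing $v = \sum_i v_i e_i$, the trace is
\begin{equation*}
\mathrm{tr}\bigl((v \otimes \varepsilon) \rho_\mu(H)\bigr) = \sum_i e_i^\ast\bigl(\varepsilon(\rho_\mu(H) e_i)\, v\bigr) = \sum_i \varepsilon(\rho_\mu(H) e_i)\, v_i = \varepsilon\bigl(\rho_\mu(H)\, v\bigr),
\end{equation*}
using linearity of $\varepsilon \circ \rho_\mu(H)$ in the last step.

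There is no real obstacle: the statement is essentially a restatement of the moment map formula combined with the standard identification $V \otimes V^\ast \cong \mathrm{End}(V)$. The only point requiring care is keeping the conventions straight (which tensor factor goes to which slot in the trace), but once the identification $(v \otimes \varepsilon)(w) = \varepsilon(w)v$ is fixed the two equalities are immediate.
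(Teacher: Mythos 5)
Your proposal is correct and follows essentially the same route as the paper: pull back $f_H$ through the diffeomorphism $M$, apply the defining identity $\langle M(v\otimes\varepsilon),Z\rangle=\varepsilon(\rho_\mu(Z)v)$ at $Z=H$, and then use the standard identity $\varepsilon(Sv)=\mathrm{tr}\left(\left(v\otimes\varepsilon\right)S\right)$ under the identification $V\otimes V^{\ast}\cong\mathrm{End}(V)$. The only difference is that you verify this last trace identity explicitly in a basis, whereas the paper simply asserts it; your conventions and computation are consistent with the paper's.
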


\begin{proof}
For a moment, denote by $\widetilde{f}_{H}$ the function $f_{H}$ defined on $%
G\cdot \left( v_{0}\otimes \varepsilon _{0}\right) $. Then 
\begin{equation*}
\widetilde{f}_{H}\left( v\otimes \varepsilon \right) =f_{H}\left( M\left(
v\otimes \varepsilon \right) \right) =\langle M\left( v\otimes \varepsilon
\right) ,H\rangle ,
\end{equation*}%
which is $\varepsilon \left( \rho _{\mu }\left( H\right) v\right) $ by the
definition of $M$. In the expression involving the trace, $v\otimes
\varepsilon $ is regarded as an element of $\mathrm{End}\left( V\right) $
and the second equality follows from $\varepsilon \left( Sv\right) =\mathrm{%
tr}\left( \left( v\otimes \varepsilon \right) S\right) $ which holds for any 
$S\in \mathrm{End}\left( V\right) $.
\end{proof}

\begin{remark}
In the above statement it is subsumed that all elements of the orbit $G\cdot
\left( v_{0}\otimes \varepsilon _{0}\right) $ are decomposable, that is,
have the form $v\otimes \varepsilon $. This happens because all elements of
the orbit have the form $\left( \rho _{\mu }\left( g\right) v_{0}\right)
\otimes \left( \rho _{\mu }^{\ast }\left( g\right) \varepsilon _{0}\right) $.
\end{remark}

\begin{remark}
The isomorphism between $V\otimes V^{\ast }$ and $\mathrm{End}\left(
V\right) $ associates to elements of the orbit $G\cdot \left( v_{0}\otimes
\varepsilon _{0}\right) $ linear transformations of rank $1$ with
transversal kernel and image.
\end{remark}

\subsection{Isomorphism with the open orbit in $\mathbb{F}_{H_{\protect\mu %
}}\times \mathbb{F}_{H_{\protect\mu }^{\ast }}$}

As mentioned earlier, the flags $\mathbb{F}_{H_{\mu }}$ and $\mathbb{F}%
_{H_{\mu }^{\ast }}$ are obtained as projective orbits in $\mathbb{P}\left(
V\right) $ and $\mathbb{P}\left( V^{\ast }\right) $, respectively. In the
identification of these flags with the projective orbits, the origin of $%
\mathbb{F}_{H_{\mu }}$ is identified with the highest weight space $\left[
v_{0}\right] $. This happens because the isotropy algebra of $\left[ v_{0}%
\right] $ contains $\sum_{\alpha >0}\mathfrak{g}_{\alpha }$. In the
identification of $\mathbb{F}_{H_{\mu }}$ with the adjoint orbit of the
compact group $K$, this origin is precisely $H_{\mu }$.

On the other hand, $\left[ \varepsilon _{0}\right] $ is the lowest weight
space in $V^{\ast }$. The isotropy algebra at $\left[ \varepsilon _{0}\right]
$ contains $\sum_{\alpha <0}\mathfrak{g}_{\alpha }$. This way, $\left[
\varepsilon _{0}\right] \in \mathbb{P}\left( V^{\ast }\right) $ is
identified with $w_{0}b^{\ast }\in \mathbb{F}_{H_{\mu }^{\ast }}$, where $%
b^{\ast }$ is the origin of $\mathbb{F}_{H_{\mu }^{\ast }}$. Under the
identification of $\mathbb{F}_{H_{\mu }^{\ast }}$ with the adjoint orbit of
the compact group $K$, the origin is $-H_{\mu }=w_{0}H_{\mu }^{\ast }$.

We use these identifications to see $\mathbb{F}_{H_{0}}\times \mathbb{F}%
_{H_{0}^{\ast }}$ as the product of the projective orbits $G\cdot \left[
v_{0}\right] \times G\cdot \left[ \varepsilon _{0}\right] \subset \mathbb{P}%
\left( V\right) \times \mathbb{P}\left( V^{\ast }\right) $. Then, the open
orbit in $\mathbb{F}_{H_{0}}\times \mathbb{F}_{H_{0}^{\ast }}$ becomes the
diagonal $G$-orbit of $\left( \left[ v_{0}\right] ,\left[ \varepsilon _{0}%
\right] \right) \in \mathbb{P}\left( V\right) \times \mathbb{P}\left(
V^{\ast }\right) $. Denote this orbit by $G\cdot \left( \left[ v_{0}\right] ,%
\left[ \varepsilon _{0}\right] \right) $, that is, 
\begin{equation*}
G\cdot \left( \left[ v_{0}\right] ,\left[ \varepsilon _{0}\right] \right)
=\{\left( \rho _{\mu }\left( g\right) \left[ v_{0}\right] ,\rho _{\mu
}^{\ast }\left( g\right) \left[ \varepsilon _{0}\right] \right) \in \mathbb{P%
}\left( V\right) \times \mathbb{P}\left( V^{\ast }\right) :g\in G\}.
\end{equation*}

We can now describe the diffeomorphism between the orbit $G\cdot \left(
v_{0}\otimes \varepsilon _{0}\right) \subset V\otimes V^{\ast }$ and the
orbit $G\cdot \left( \left[ v_{0}\right] ,\left[ \varepsilon _{0}\right]
\right) \subset \mathbb{F}_{H_{\mu }}\times \mathbb{F}_{H_{\mu }^{\ast
}}\subset \mathbb{P}\left( V\right) \times \mathbb{P}\left( V^{\ast }\right) 
$. In fact, the diffeomorphism associates $g\cdot \left( \left[ v_{0}\right]
,\left[ \varepsilon _{0}\right] \right) =\left( \rho _{\mu }\left( g\right) %
\left[ v_{0}\right] ,\rho _{\mu }^{\ast }\left( g\right) \left[ \varepsilon
_{0}\right] \right) $ to $g\cdot \left( v_{0}\otimes \varepsilon _{0}\right)
=\rho _{\mu }\left( g\right) v_{0}\otimes \rho _{\mu }^{\ast }\left(
g\right) \varepsilon _{0}$. We obtain,

\begin{proposition}
\label{phi} Let $\Phi :G\cdot \left( v_{0}\otimes \varepsilon _{0}\right)
\rightarrow G\cdot \left( \left[ v_{0}\right] ,\left[ \varepsilon _{0}\right]
\right) $ be the diffeomorphism obtained by identification of both orbits
with $G/Z_{H_{\mu }}$. If $v\otimes \varepsilon \in G\cdot \left(
v_{0}\otimes \varepsilon _{0}\right) $ then $\Phi \left( v\otimes
\varepsilon \right) =\left( \left[ v\right] ,\left[ \varepsilon \right]
\right) $ with inverse $\Phi ^{-1}\left( \left[ v\right] ,\left[ \varepsilon %
\right] \right) =\left( v\otimes \varepsilon \right) $.
\end{proposition}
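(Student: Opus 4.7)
The plan is to define the candidate map $\psi\colon G\cdot(v_{0}\otimes\varepsilon_{0})\to G\cdot([v_{0}],[\varepsilon_{0}])$ explicitly by $v\otimes\varepsilon\mapsto([v],[\varepsilon])$ and to verify that it must agree with $\Phi$. Since both orbits have already been identified with the homogeneous space $G/Z_{H_{\mu}}$ (with the same isotropy at the basepoint and the same $G$-action), any $G$-equivariant map between them that sends basepoint to basepoint is unique; consequently it suffices to check these two properties for $\psi$. The formula for $\Phi^{-1}$ then follows by reading the same identification backwards.

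First, I would verify that $\psi$ is well-defined. By the Remark preceding the statement, every element of $G\cdot(v_{0}\otimes\varepsilon_{0})$ is a decomposable tensor of the form $\rho_{\mu}(g)v_{0}\otimes \rho_{\mu}^{\ast}(g)\varepsilon_{0}$. Decomposability as $v\otimes\varepsilon$ is only unique up to rescalings $v\mapsto cv$, $\varepsilon\mapsto c^{-1}\varepsilon$, but these leave $[v]$ and $[\varepsilon]$ unchanged, so $\psi$ descends to a well-defined map into $\mathbb{P}(V)\times\mathbb{P}(V^{\ast})$. Moreover, the same computation shows that $\psi(g\cdot(v_{0}\otimes\varepsilon_{0}))=(\rho_{\mu}(g)[v_{0}],\rho_{\mu}^{\ast}(g)[\varepsilon_{0}])=g\cdot([v_{0}],[\varepsilon_{0}])$, which establishes both that $\psi$ takes values in the target orbit and that it is $G$-equivariant. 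Together with $\psi(v_{0}\otimes\varepsilon_{0})=([v_{0}],[\varepsilon_{0}])$, the uniqueness argument yields $\psi=\Phi$.

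For the inverse, since $\Phi$ is a bijection of $G/Z_{H_{\mu}}$ with itself, given $([v],[\varepsilon])\in G\cdot([v_{0}],[\varepsilon_{0}])$ we write $([v],[\varepsilon])=(\rho_{\mu}(g)[v_{0}],\rho_{\mu}^{\ast}(g)[\varepsilon_{0}])$ for some $g$ (unique modulo $Z_{H_{\mu}}$) and then $\Phi^{-1}([v],[\varepsilon])=g\cdot(v_{0}\otimes\varepsilon_{0})=\rho_{\mu}(g)v_{0}\otimes\rho_{\mu}^{\ast}(g)\varepsilon_{0}$. With the canonical choice of representatives $v=\rho_{\mu}(g)v_{0}$ and $\varepsilon=\rho_{\mu}^{\ast}(g)\varepsilon_{0}$, this reads $\Phi^{-1}([v],[\varepsilon])=v\otimes\varepsilon$, as claimed.

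The main obstacle is purely notational: being precise about the scaling ambiguity in both directions. In the forward direction, one must verify that passing to projective classes is independent of the scalar freedom in decomposable tensors; in the backward direction one must interpret the notation $v\otimes\varepsilon$ as referring to the canonical representatives obtained by applying a common $g\in G$ to $v_{0}$ and $\varepsilon_{0}$, since arbitrary lifts of $[v]$ and $[\varepsilon]$ would differ from the element of the orbit by a scalar. The earlier Proposition identifying the isotropy at $v_{0}\otimes\varepsilon_{0}$ with $Z_{H_{\mu}}$ guarantees that this canonical choice is well-defined.
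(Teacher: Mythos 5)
Your proof is correct and follows essentially the same route as the paper: the paper's own proof simply points back to the preceding explicit description $g\cdot(v_{0}\otimes\varepsilon_{0})\mapsto(\rho_{\mu}(g)[v_{0}],\rho_{\mu}^{\ast}(g)[\varepsilon_{0}])$ and then checks that both assignments are well defined under the scalar ambiguity $v\mapsto av$, $\varepsilon\mapsto a^{-1}\varepsilon$. Your treatment of the inverse via the canonical representatives coming from a common $g$ (unique modulo $Z_{H_{\mu}}$) is in fact slightly more careful than the paper's phrasing of the backward well-definedness, but it is the same argument.
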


\begin{proof}
Our previous argument already proved this. Nevertheless, it is worth
observing that the maps $v\otimes \varepsilon \mapsto \left( \left[ v\right]
,\left[ \varepsilon \right] \right) $ and $\left( \left[ v\right] ,\left[
\varepsilon \right] \right) \mapsto v\otimes \varepsilon $ are well defined,
since $v_{1}\otimes \varepsilon _{1}=v\otimes \varepsilon $ is equivalent to 
$v_{1}=av$ and $\varepsilon _{1}=a^{-1}\varepsilon $ which is also
equivalent to $\left( \left[ v_{1}\right] ,\left[ \varepsilon _{1}\right]
\right) =\left( \left[ v\right] ,\left[ \varepsilon \right] \right) $.
\end{proof}


\subsection{Isomorphism with $T^{\ast }\mathbb{F}_{H_{\protect\mu }}$}

First of all, we recall the isomorphism between the adjoint orbit $\mathcal{O%
}(H_\mu)$ and the cotangent bundle $T^\ast \mathbb{F}_\Theta$. Here, $H_\mu$
remains fixed as in the previous sections and is characteristic for $\Theta$%
, that is, $\Theta=\{\alpha\in\Sigma: \alpha(H_\mu)=0\}$.

By the Iwasawa decomposition $G=KAN$ we can write $G=KP_\Theta$ and the
adjoint action of $P_\Theta$ on $H_\mu$ is given by $\mathrm{Ad}
(P_\theta)\cdot H_\mu=H_\mu+\mathfrak{n}_\Theta^+$, where $\mathfrak{n}%
_\Theta^+ =\sum_{\Pi^+\setminus \langle \Theta\rangle}{\mathfrak{g}_\alpha}$%
. Thus, 
\begin{equation*}
\mathcal{O}(H{_\mu})=\mathrm{Ad}(G)H_\mu=\mathrm{Ad}(K)(H_\mu+\mathfrak{n}%
_\Theta^+)=\bigcup_{k\in K}\mathrm{Ad}(k)(H_\mu+\mathfrak{n}_\Theta^+).
\end{equation*}

The identification of the adjoint orbit with the cotangent bundle is given
by the map that associates to each element of the adjoint orbit $\mathrm{Ad}%
(k)(H_\mu+X)$, $X\in \mathfrak{n}_\Theta^+$ the linear functional $f\in
(T_{kb_0}\mathbb{F}_\Theta)^\ast$ given by $f(Y)=\langle \mathrm{Ad}
(k)X,Y\rangle$, $Y\in T_{kb_0}\mathbb{F}_\Theta$.


\begin{proposition}
Let $\mu$ be a highest weight and $v_0,\varepsilon_0$ the generators of the
highest weight space on $V$ and lowest weight space on $V^\ast$
respectively. The diffeomorphism between $G\cdot(v_0\otimes \varepsilon_0)$
and $T^\ast\mathbb{F}_\Theta$ is given by 
\begin{equation}
g\cdot (v_0\otimes \varepsilon_0)\mapsto (Y \mapsto \langle \mathrm{Ad}(k)X,
Y \rangle, Y \in T_{kb_0}\mathbb{F}_\Theta),
\end{equation}
where $g=kp$ is the Iwasawa decomposition, $\mathrm{Ad}(p)H_\mu=H_\mu+X$,
and the flag $\mathbb{F}_\Theta$ is determined by $H_\mu$.
\end{proposition}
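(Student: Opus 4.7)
The plan is to obtain this diffeomorphism as the composition of two maps already established in the paper, namely the moment map $M\colon G\cdot(v_0\otimes\varepsilon_0)\to \mathrm{Ad}(G)H_\mu$ of the preceding subsection and the map $\iota\colon \mathrm{Ad}(G)H_\mu\to T^{\ast}\mathbb{F}_\Theta$ of Theorem \ref{teodifeocotan} written explicitly in (\ref{fordifeoexplicit}). Both are known to be $G$-equivariant diffeomorphisms onto their targets, so the composition $\iota\circ M$ is a diffeomorphism between $G\cdot(v_0\otimes\varepsilon_0)$ and $T^{\ast}\mathbb{F}_\Theta$; the content of the proposition is then just to compute this composition explicitly in Iwasawa coordinates.

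First I would unfold the moment map on a general element: by $G$-equivariance of $M$ and the identity $M(v_0\otimes\varepsilon_0)=H_\mu$ established in the previous lemma, one has $M(g\cdot(v_0\otimes\varepsilon_0))=\mathrm{Ad}(g)H_\mu$. Next, using the Iwasawa decomposition $g=kp$ with $k\in K$ and $p\in AN\subset P_\Theta$, together with the fact noted in Subsection \ref{subsecassocfibr} that the map $p\mapsto \mathrm{Ad}(p)H_\mu-H_\mu$ sends $P_\Theta$ into $\mathfrak{n}_\Theta^{+}$ (so that $\mathrm{Ad}(p)H_\mu=H_\mu+X$ with the $X\in \mathfrak{n}_\Theta^{+}$ appearing in the statement), I would write
\begin{equation*}
M\bigl(g\cdot(v_0\otimes\varepsilon_0)\bigr)=\mathrm{Ad}(k)\bigl(H_\mu+X\bigr).
\end{equation*}
Now applying $\iota$ to this element, the explicit formula (\ref{fordifeoexplicit}) gives directly
\begin{equation*}
\iota\bigl(\mathrm{Ad}(k)(H_\mu+X)\bigr)=\bigl\langle \mathrm{Ad}(k)X,\,\cdot\,\bigr\rangle \in T^{\ast}_{kb_0}\mathbb{F}_\Theta,
\end{equation*}
which matches the asserted formula.

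The only places requiring care are: verifying that the $X\in\mathfrak{n}_\Theta^{+}$ constructed from the Iwasawa decomposition is well defined (this uses $Z_\Theta\subset P_\Theta$ so the coset representative $p$ is determined up to $Z_\Theta$, and $\mathrm{Ad}(Z_\Theta)$ fixes $H_\mu$); and checking that the base point $kb_0\in \mathbb{F}_\Theta$ assigned by the cotangent bundle agrees with the image of $g\cdot(v_0\otimes\varepsilon_0)$ under the projection $G/Z_{H_\mu}\to G/P_\Theta$. This is immediate, since $g=kp$ with $p\in P_\Theta$ gives $gP_\Theta=kP_\Theta$. The main (very mild) obstacle is simply keeping track that the three realizations $G\cdot(v_0\otimes\varepsilon_0)$, $\mathrm{Ad}(G)H_\mu$, and $T^{\ast}\mathbb{F}_\Theta$ are linked by the $G$-equivariant maps $M$ and $\iota$ so that one may compose without ambiguity; once that is observed, the formula is read off directly from (\ref{fordifeoexplicit}).
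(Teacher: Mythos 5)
Your proposal is correct and follows exactly the route the paper intends (the paper states this proposition without a written proof, but the preceding paragraphs set up precisely the two ingredients you compose): the restriction of the moment map $M$ to a diffeomorphism $G\cdot(v_0\otimes\varepsilon_0)\rightarrow\mathrm{Ad}(G)H_\mu$ followed by the explicit map $\iota$ of (\ref{fordifeoexplicit}), with $\mathrm{Ad}(g)H_\mu=\mathrm{Ad}(k)(H_\mu+X)$ read off from $G=KP_\Theta$. Your side remarks on well-definedness and on the base point $kb_0$ are the right checks, and the equivariance of $M$ disposes of the ambiguity in the representative $g$ automatically.
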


\section{Compactified adjoint orbits}

\label{compact} We compactify adjoint orbits $\mathcal{O}\left( H_{0}\right) 
$ to products of flags $\mathbb{F}_{H_{0}}\times \mathbb{F}_{H_{0}^{\ast }}$
as an auxiliary tool to identify Lagrangean submanifolds of the orbits. We
choose canonical complex structures on $\mathbb{F}_{H_{0}}$ and $\mathbb{F}%
_{H_{0}^{\ast }}$ so that, for an element $w_{0}$ of the Weyl group $%
\mathcal{W}$, the right action $R_{w_{0}}\colon \mathbb{F}%
_{H_{0}}\rightarrow \mathbb{F}_{H_{0}^{\ast }}$ is anti-holomorphic
(proposition \ref{ah}). Consequently the map $R_{w_{0}}\colon \mathbb{F}%
_{H_{0}}\rightarrow \mathbb{F}_{H_{0}^{\ast }}$ is anti-symplectic with
respect to the K\"{a}hler forms on $\mathbb{F}_{H_{0}}$ and $\mathbb{F}%
_{H_{0}^{\ast }}$ given by the Borel metric and canonical complex structures
(corollary \ref{corantisimpl}). We then obtain further examples of
Lagrangean graphs by composites (either on the left or on the right) of $%
R_{w_{0}}$ with symplectic maps.

\subsection{Lagrangean graphs in adjoint orbits}

On one hand, $\mathcal{O}\left( H_{0}\right) $ can be embedded as an open
dense submanifold in a product of two flags (section \ref{secsplit}); on the
other hand, graphs of symplectic maps are Lagrangean submanifolds inside the
product, due to the following general fact.

Let $\left( M,\omega \right) $ and $\left( N,\omega _{1}\right) $ be
symplectic manifolds. The cartesian product $M\times N$ can be endowed with
the symplectic form $\omega \times \omega _{1}$. If $\phi \colon
M\rightarrow N$ is anti-symplectic that is, $\phi ^{\ast }\omega
_{1}=-\omega $ then $\mathrm{graph}\left( \phi \right) \subset M\times N$ is
a Lagrangean submanifold with respect to $\omega \times \omega _{1}$. 
Similarly, we could use a symplectic map (symplectomorphism) $\phi \colon
M\rightarrow M$ taking $\omega _{1}=-\omega $, which is also a symplectic
form.

With this in mind, to construct an assortment of Lagrangean submanifolds in $%
\mathcal{O}\left( H_{0}\right) $ we use an embedding $\mathcal{O}\left(
H_{0}\right) \hookrightarrow \mathbb{F}_{1}\times \mathbb{F}_{2}$ into a
product of flags. Taking symplectic forms $\omega _{1}$ and $\omega _{2}$ on 
$\mathbb{F}_{1}$ and $\mathbb{F}_{2}$ we obtain a symplectic form $\omega
_{1}\times \omega _{2}$ on $\mathbb{F}_{1}\times \mathbb{F}_{2}$ and
consequently on $\mathcal{O}\left( H_{0}\right) $ by restriction. If $\phi
\colon \mathbb{F}_{1}\rightarrow \mathbb{F}_{2}$ is anti-symplectic then $%
\mathrm{graph}\left( \phi \right) $ and $\mathrm{graph}\left( \phi \right)
\cap \mathcal{O}\left( H_{0}\right) $ are Lagrangean submanifolds of $%
\mathbb{F}_{1}\times \mathbb{F}_{2}$ and $\mathcal{O}\left( H_{0}\right) $,
respectively. %
The intended construction involves, first of all, a discussion about the
right action of the Weyl group.

\subsection{Right action of the Weyl group}

Let  $\mathfrak{g}$ be a noncompact semisimple Lie algebra (real or complex)
and let $G$ be the adjoint group of $\mathfrak{g}$ and $K\subset G$ the
maximal compact subgroup. The maximal flag of $\mathfrak{g}$ is given by $%
\mathbb{F}=G/P=K/M$, where $P=MAN$ is the minimal parabolic subgroup. The
adjoint orbit of a regular element $H\in \mathfrak{a}=\log A$ is given by $%
\mathcal{O}\left( H\right) =G/MA$. The flag $\mathbb{F}$ is contained in $%
\mathcal{O}\left( H\right) $ since $\mathbb{F}$ is a $K$-orbit of $H$.

The Weyl group $\mathcal{W}$ is isomorphic to $\mathrm{Norm}_{G}\left(
A\right) /MA=\mathrm{Norm}_{K}\left( A\right) /M$. We obtain right actions
of $\mathcal{W}$ on $\mathbb{F}=K/M$ (with $\mathcal{W}=\mathrm{Norm}%
_{K}\left( A\right) /M$) and on $\mathcal{O}\left( H\right) =G/MA$ (with $%
\mathcal{W}=\mathrm{Norm}_{G}\left( A\right) /MA$). Moreover, the fibrations 
$G/MA\rightarrow G/\mathrm{Norm}_{G}\left( A\right) $ and $\mathbb{F}%
=K/M\rightarrow K/\mathrm{Norm}_{K}\left( A\right) $ are principal bundles
with strutural group $\mathcal{W}$.

\begin{example}
Let $\mathfrak{g}=\mathfrak{sl}\left( n,\mathbb{R}\right) $ or $\mathfrak{g}=%
\mathfrak{sl}\left( n,\mathbb{C}\right) $. Hence, a regular element $H$ is a
diagonal matrix $H=\mathrm{diag}\{a_{1},\ldots ,a_{n}\}$ with $a_{1}>\cdots
>a_{n}$. $\mathcal{O}\left( H\right) =\{gHg^{-1}:g\in \mathrm{Sl}\left( n,%
\mathbb{R}\right) \}$ (or $\mathbb{C}$), that is, the orbit is the set of
diagonalizable matrices with the same eigenvalues as $H$. The Weyl group $%
\mathcal{W}$ is the permutation group of $n$ elements, whereas $\mathrm{Norm}%
_{K}\left( A\right) $ is the set of signed permutation matrices (matrices
such that each row or column has exactly one nonzero entry $\pm 1$). The
right action of a permutation $w\in \mathcal{W}$ is given by 
\begin{equation*}
R_{w}:gHg^{-1}\mapsto g\overline{w}H\left( g\overline{w}\right)
^{-1}=g\left( \overline{w}H\overline{w}^{-1}\right) g^{-1}
\end{equation*}%
where $\overline{w}\in \mathrm{Norm}_{K}\left( A\right) $ is the permutation
matrix that represents $w\in \mathcal{W}$. In this expression for $R_{w}$
the term $\overline{w}H\overline{w}^{-1}$ is the matrix whose diagonal
entries are the same as the ones of $H$ permuted by $w$ in the permutation
group $\mathcal{W}$. 

The right action $R_{w}$ of $w\in \mathcal{W}$ is in general completely
different from the left action of any of its representatives $\overline{w}%
\in \mathrm{Norm}_{K}\left( A\right) $. For example, in the case of $%
\mathfrak{sl}\left( 2,\mathbb{C}\right) $, the Weyl group is $\left\{
1,\left( 12\right) \right\} $ and the right action of $w=\left( 12\right) $
in the flag $S^{2}=\mathbb{C}P^{1}$ is the antipodal map. On the other hand, 
\begin{equation*}
\overline{w}=\left( 
\begin{array}{cc}
0 & -1 \\ 
1 & 0%
\end{array}%
\right) \in \mathrm{Norm}_{K}\left( A\right)
\end{equation*}%
is a representative of $\left( 12\right) $. The left action of $\overline{w}$
has 2 fixed points. 
\end{example}

The right action of $\mathcal{W}$ leaves invariant the induced vector fields:

\begin{proposition}
\label{propcampoinva}Given an element $A$ in the Lie algebra, denote by $%
\widetilde{A}$ the induced vector field on the homogeneous space ($G/MA$ or $%
K/M$). Then, $\left( R_{w}\right) _{\ast }\widetilde{A}=\widetilde{A}$ for
all $w\in \mathcal{W}$.
\end{proposition}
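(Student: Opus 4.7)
The plan is to reduce the claim to the general fact that left and right translations on a Lie group commute, and then check that everything descends properly to the relevant quotients.

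First, I would recall the construction of $\widetilde{A}$. By definition, $\widetilde{A}$ is the infinitesimal generator of the left action of the one-parameter subgroup $\exp(tA)$, so its flow $\phi_t$ on $G/MA$ (respectively on $K/M$) is simply left multiplication:
\begin{equation*}
\phi_t(gMA)=\exp(tA)\,gMA,\qquad \phi_t(kM)=\exp(tA)\,kM.
\end{equation*}
Since $(R_w)_\ast\widetilde{A}$ is the generator of the flow $R_w\circ\phi_t\circ R_w^{-1}$, the statement is equivalent to showing that $R_w$ commutes with $\phi_t$ for every $t$.

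Next, I would verify that $R_w$ is well defined on the quotients. Pick a representative $\bar{w}\in\mathrm{Norm}_G(A)$ (respectively $\bar{w}\in\mathrm{Norm}_K(A)$) of $w\in\mathcal{W}$. Because $\bar{w}$ normalizes $A$ and hence also the centralizer $M=Z_K(\mathfrak{a})$, we have $\bar{w}^{-1}(MA)\bar{w}=MA$ and $\bar{w}^{-1}M\bar{w}=M$. Thus $R_w(gMA)=g\bar{w}\,MA$ and $R_w(kM)=k\bar{w}\,M$ are independent of the choice of representative modulo $M$ (resp. $MA$), and independent of the coset representative $g$ (resp. $k$).

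Now I would use the tautological identity $L_h\circ R_{h'}=R_{h'}\circ L_h$ on $G$ (or $K$): for all $g$, $h\,g\,h'=(hg)h'=h(gh')$. Applying this with $h=\exp(tA)$ and $h'=\bar{w}$ and passing to the quotient gives
\begin{equation*}
R_w\bigl(\phi_t(gMA)\bigr)=\exp(tA)\,g\bar{w}\,MA=\phi_t\bigl(R_w(gMA)\bigr),
\end{equation*}
and similarly on $K/M$. Differentiating this identity at $t=0$ yields exactly $(R_w)_\ast\widetilde{A}=\widetilde{A}$, as required.

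There is really no hard step here; the only thing to keep in mind is the well-definedness issue in the second paragraph, which crucially uses that $\bar{w}\in\mathrm{Norm}_{K}(A)$ (respectively $\mathrm{Norm}_{G}(A)$). Everything else is the elementary commutation of left and right translations.
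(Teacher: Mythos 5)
Your argument is correct and is essentially the paper's proof, which consists of the single observation that $R_{w}$ commutes with the flow of $\widetilde{A}$, namely the left action of $e^{tA}$; you have merely spelled out the well-definedness of $R_{w}$ on the quotients and the standard passage from commuting flows to equality of generators. (One cosmetic slip: in your well-definedness paragraph the representative of $w$ should be taken modulo $MA$ on $G/MA$ and modulo $M$ on $K/M$, the reverse of the order in which you wrote it.)
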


\begin{proof}
Indeed, $R_{w}$ commutes with the flow of $\widetilde{A}$, which is the left
action of $e^{tA}$.
\end{proof}

\subsection{The $K$-orbit and graphs}

In section \ref{secsplit}, we defined an embedding of the adjoint orbit into
the product $\mathbb{F}_{H_{0}}\times \mathbb{F}_{H_{0}^{\ast }}$, where $%
\mathbb{F}_{H_{0}^{\ast }}$ is the dual flag of $\mathbb{F}_{H_{0}}$.

Consider first of all the case when $\mathbb{F}_{H}=\mathbb{F}$ is the
maximal flag, which is self-dual. In this flag, the right action of $%
\mathcal{W}$ is well defined. Denote by $b_{0}$ the origin of $\mathbb{F}$
and set $b_{w}=R_{w}b_{0}$, $w\in \mathcal{W}$.

Let $w_{0}\in \mathcal{W}$ be the principal involution (element of largest
length as a product of simple reflections). The embedding of $\mathcal{O}%
\left( H_{0}\right) $ is given by the $G$-orbit of $\left(
b_{0},b_{w_{0}}\right) $ under the diagonal action $g\left( x,y\right)
=\left( gx,gy\right) $. This $G$-orbit is identified with the adjoint orbit $%
\mathcal{O}\left( H_{0}\right) =G/MA$ for any $H_{0}$ regular and real. Let $%
K$ be the maximal compact subgroup of $G$ (real compact form in the case of
complex $G$).

\begin{proposition}
\label{proporbitcompact}For $w\in \mathcal{W}$, the $K$-orbit of $\left(
b_{0},b_{w}\right) $ by the diagonal action coincides with the graph of $%
R_{w}$.
\end{proposition}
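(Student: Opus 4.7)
The plan is to show both inclusions by unwrapping the definitions and exploiting the fact that left and right translations on $K$ commute. The key observation is that $\mathbb{F}=K/M$ is a homogeneous space under left multiplication by $K$, while the right $\mathcal{W}$-action $R_w$ is induced by right multiplication by a representative $\bar w\in \mathrm{Norm}_K(A)$, and these two operations manifestly commute at the level of $K$.

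First I would spell out both sets. The $K$-orbit of $(b_0,b_w)$ under the diagonal action is
\begin{equation*}
K\cdot(b_0,b_w)=\{(k\cdot b_0,k\cdot b_w):k\in K\}=\{(kM,k\bar w M):k\in K\},
\end{equation*}
using $b_w=R_w b_0=\bar w M$. On the other hand, the graph of $R_w\colon \mathbb{F}\to \mathbb{F}$ is
\begin{equation*}
\mathrm{graph}(R_w)=\{(x,R_w(x)):x\in \mathbb{F}\}.
\end{equation*}

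Next I would check the two inclusions. For $\mathrm{graph}(R_w)\subset K\cdot(b_0,b_w)$: since $K$ acts transitively on $\mathbb{F}$, every $x\in \mathbb{F}$ has the form $x=k\cdot b_0=kM$ for some $k\in K$. Then
\begin{equation*}
R_w(x)=R_w(kM)=k\bar w M=k\cdot(\bar w M)=k\cdot b_w,
\end{equation*}
so $(x,R_w(x))=(k\cdot b_0,k\cdot b_w)\in K\cdot(b_0,b_w)$. Conversely, for $K\cdot(b_0,b_w)\subset \mathrm{graph}(R_w)$, any element $(k\cdot b_0,k\cdot b_w)$ satisfies $R_w(k\cdot b_0)=k\bar w M=k\cdot b_w$ by the same computation, so it lies in the graph.

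There is no real obstacle here; the only subtle point is making sure that $R_w$ is well defined as a map $\mathbb{F}\to \mathbb{F}$, i.e.\ that right multiplication by $\bar w$ descends to $K/M$ independently of the chosen representative $\bar w$. This follows because two representatives differ by an element of $M$, and right multiplication by $M$ is trivial on $K/M$. Once this is noted, the commutativity of left and right multiplication on $K$ finishes the argument.
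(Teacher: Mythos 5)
Your proof is correct and follows essentially the same route as the paper: both arguments use transitivity of $K$ on $\mathbb{F}$ together with the commutativity of the left $K$-action and the right $\mathcal{W}$-action to identify $(x,R_w(x))$ with $k\cdot(b_0,b_w)$ in both directions. Your additional remark on the well-definedness of $R_w$ is a harmless (and already standard) supplement to the paper's argument.
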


\begin{proof}
Take $x=k\cdot b_{0}\in \mathbb{F}$, $k\in K$. Then, $R_{w}\left( x\right)
=R_{w}\left( k\cdot b_{0}\right) =k\cdot R_{w}\left( b_{0}\right) $ since
the left and right actions commute. Thus, $\left( x,R_{w}\left( x\right)
\right) =\left( k\cdot b_{0},k\cdot R_{w}\left( b_{0}\right) \right) =k\cdot
\left( b_{0},b_{w}\right) $ belongs to the $K$-orbit of $\left(
b_{0},b_{w}\right) $. Conversely, an element of the orbit $k\cdot \left(
b_{0},b_{w}\right) =\left( x,R_{w}\left( x\right) \right) $, $x=k\cdot b_{0}$%
, belongs to the graph of $R_{w}$.
\end{proof}

\begin{remark}
In the case when $w=w_{0}$ is the principal involution, the $K$-orbit of
proposition \ref{proporbitcompact} corresponds to the zero section of $%
T^{\ast }\mathbb{F}$ when $\mathcal{O}\left( H_{0}\right) =G/MA$ is
identified with the cotangent bundle. This happens because the origin $G/MA$
gets mapped to $H_{0}\in \mathcal{O}\left( H_{0}\right) $ and the $K$-orbit
of $H_{0}$ is identified to the zero section. On the other hand, the origin
of the open orbit $G\cdot \left( b_{0},b_{w_{0}}\right) \in \mathbb{F}\times 
\mathbb{F}$ is $\left( b_{0},b_{w_{0}}\right) $, so that its $K$-orbit gets
identified to the $K$-orbit of $H_{0}$.
\end{remark}

\begin{remark}
It follows directly from proposition \ref{proporbitcompact} that the graphs
of right translations $R_{w}$, $w\in \mathcal{W}$, are contained in the
diagonal $G$-orbits and consequently are compact inside these orbits. This
does not happen with left translations, not even by elements of $\mathrm{Norm%
}_{K}\left( A\right) $, which represent elements of the Weyl group.
\end{remark}

\subsection{Example}

For $\mathfrak{sl}\left( 2,\mathbb{C}\right) $ the flag is $\mathbb{C}%
P^{1}=S^{2}$ and $\mathcal{W}=\{1,\left( 12\right) \}$. The right action of $%
R_{\left( 12\right) }$ on $S^{2}$ is the antipodal map. Another way to see
this right action is to identify $\mathbb{C}P^{1}$ with the set of Hermitian
matrices with eigenvalues $\pm 1$ (the adjoint orbit of the compact group $%
\mathrm{SU}\left( 2\right) $). This identification associates to a Hermitian
matrix the eigenspace associated to the eigenvalue $+1$. In this case, if $%
\xi =\langle \left( x,y\right) \rangle \in \mathbb{C}P^{1}$ then $R_{\left(
12\right) }\left( \xi \right) $ is the eigenspace of the Hermitian matrix
associated to the eigenvalue $-1$. That is, $R_{\left( 12\right) }\left( \xi
\right) $ is the Hermitian orthogonal $\xi ^{\bot }$ of $\xi $, which is
generated by $\left( -\overline{y},\overline{x}\right) $ if $\xi =\langle
\left( x,y\right) \rangle $.

It is convenient to write down the Hermitian matrix whose eigenspace
associated to $-1$ is $\xi =\langle \left( x,y\right) \rangle $ with $x%
\overline{x}+y\overline{y}=1$. It is given by 
\begin{equation}
\left( 
\begin{array}{cc}
x & -\overline{y} \\ 
y & \overline{x}%
\end{array}%
\right) \left( 
\begin{array}{cc}
1 & 0 \\ 
0 & -1%
\end{array}%
\right) \left( 
\begin{array}{cc}
\overline{x} & \overline{y} \\ 
-y & x%
\end{array}%
\right) =\left( 
\begin{array}{cc}
x\overline{x}-y\overline{y} & 2x\overline{y} \\ 
2\overline{x}y & -x\overline{x}+y\overline{y}%
\end{array}%
\right) .  \label{formatriz}
\end{equation}

Consider now the cartesian product $S^{2}\times S^{2}$ with the diagonal
action of $G=\mathrm{Sl}\left( 2,\mathbb{C}\right) $: $g\left( \xi ,\eta
\right) =\left( g\xi ,g\eta \right) $. There are 2 orbits:

\begin{enumerate}
\item the diagonal $\Delta =\{\left( \xi ,\xi \right) :\xi \in S^{2}\}$ and

\item and open and dense orbit $\{\left( \xi ,\eta \right) :\xi ,\eta \in
S^{2},\xi \neq \eta \}$. As a homogenous space of $G$ this open orbit is
given by $G/MA$ where $MA$ is the Cartan subgroup of diagonal matrices.
Thus, it can be identified with the adjoint orbit of 
\begin{equation*}
H_{0}=\left( 
\begin{array}{cc}
1 & 0 \\ 
0 & -1%
\end{array}%
\right) .
\end{equation*}%
This last identification is obtained explicitly associating $\left( \xi
,\eta \right) \in \mathbb{C}P^{1}\times \mathbb{C}P^{1}$ to the only $%
2\times 2$ matrix with eigenvalues $\pm 1$, whose eigenspaces associated to
eigenvalues $+1$ e $-1$ are $\xi $ and $\eta $, respectively. Therefore, if $%
\xi =\langle \left( x,y\right) \rangle $ and $\eta =\langle \left(
z,w\right) \rangle $ with $xw-yz=1$ then 
\begin{equation}
\left( \xi ,\eta \right) \mapsto \left( 
\begin{array}{cc}
x & z \\ 
y & w%
\end{array}%
\right) \left( 
\begin{array}{cc}
1 & 0 \\ 
0 & -1%
\end{array}%
\right) \left( 
\begin{array}{cc}
w & -z \\ 
-y & x%
\end{array}%
\right) =\left( 
\begin{array}{cc}
wx+yz & -2xz \\ 
2yw & -wx-yz%
\end{array}%
\right) .  \label{formatrizautoesp}
\end{equation}%
Here, the origin $o=MA$ of $G/MA$ is identified to $H_{0}$, in the adjoint
orbit and to $o=\left( \langle \left( 1,0\right) \rangle ,\langle \left(
0,1\right) \rangle \right) $, in the open orbit on $\mathbb{C}P^{1}\times 
\mathbb{C}P^{1}$. Accordingly, the open orbit of the diagonal action is $%
G\cdot o$.
\end{enumerate}

The right action $R_{\left( 12\right) }$ on $G/MA$ admits good descriptions
in terms of the identifications with the adjoint orbit $\mathrm{Ad}\left(
G\right) H_{0}$ and with the open orbit $G\cdot o$ in $S^{2}\times S^{2}$.
They go as follows:

\begin{enumerate}
\item If $A\in \mathrm{Ad}\left( G\right) H_{0}$ then $R_{\left( 12\right)
}\left( A\right) $ is the unique matrix $2\times 2$ with eigenvalues $\pm 1$
which has the same eigenspaces as those of $A$, but with the order of the
eigenvalues switched.

\item If $\left( \xi ,\eta \right) \in G\cdot o$ then $R_{\left( 12\right)
}\left( \xi ,\eta \right) =\left( \eta ,\xi \right) $, since in the first
case the order of the eigenspaces is switched.
\end{enumerate}

The diagonal $\Delta $ is also an orbit in the compact group $\mathrm{SU}%
\left( 2\right) $. Obviously $\Delta $ is the graph of the identity map of $%
S^{2}$. On the other hand, consider the $\mathrm{SU}\left( 2\right) $-orbit
through the origin $o=\left( \langle \left( 1,0\right) \rangle ,\langle
\left( 0,1\right) \rangle \right) $ of the open orbit in $\mathbb{C}%
P^{1}\times \mathbb{C}P^{1}$. This orbit is given by pairs $\left( \xi ,\eta
\right) $ orthogonal with respect to the canonical Hermitian form. From this
we deduce that $\mathrm{SU}\left( 2\right) \cdot o$ is the graph of the map $%
R_{\left( 12\right) }\colon S^{2}\rightarrow S^{2}$ (antipodal map in $S^{2}$%
). 
%

The Weyl group of the product $G\times G$ is the product $\mathcal{W}\times 
\mathcal{W}$ of the Weyl group $\mathcal{W}$ of $G$. In the case $G=\mathrm{%
Sl}\left( 2,\mathbb{C}\right) $, $\mathcal{W}\times \mathcal{W}$ has order $%
4 $, therefore its orbit through the origin $\left( \langle \left(
1,0\right) \rangle ,\langle \left( 1,0\right) \rangle \right) $ in the flag $%
\mathbb{C}P^{1}\times \mathbb{C}P^{1}$ has $4$ elements: two in the diagonal 
$\left( \langle \left( 1,0\right) \rangle ,\langle \left( 1,0\right) \rangle
\right) $ and $\left( \langle \left( 0,1\right) \rangle ,\langle \left(
0,1\right) \rangle \right) $ and two in the open orbit $\left( \langle
\left( 1,0\right) \rangle ,\langle \left( 0,1\right) \rangle \right) $ and $%
\left( \langle \left( 0,1\right) \rangle ,\langle \left( 1,0\right) \rangle
\right) $.

Take $w=\left( 1,\left( 12\right) \right) \in \mathcal{W}\times \mathcal{W}$%
. Then, $R_{w}=\mathrm{id}\times R_{\left( 12\right) }$ and a representative
of $w$ in the normalizer of the Cartan subgroup ($MA\times MA$) is $%
\overline{w}=\left( \mathrm{id},r\right) $ where $r$ is the rotation matrix 
\begin{equation*}
\left( 
\begin{array}{cc}
0 & -1 \\ 
1 & 0%
\end{array}%
\right) .
\end{equation*}%
This representative $\overline{w}$ acts on the left on the flag $\mathbb{C}%
P^{1}\times \mathbb{C}P^{1}$.

Consider the composition $\sigma =\overline{w}\circ R_{w}$, which is a
diffeomorphism of $S^{2}\times S^{2}$ that does not leave invariant the open
orbit although $\sigma \left( o\right) =o$. Since $\sigma $ acts only on the
second coordinate of the image of the flag of $G$, $\mathrm{SU}\left(
2\right) \cdot o$ (the orbit of the diagonal action) is given by the graph
of $r$: 
\begin{equation*}
\mathrm{graph}\left( r\right) =\{\left( \xi ,r\xi \right) :\xi \in \mathbb{C}%
P^{1}\}.
\end{equation*}%
In fact, if $\left( \xi ,R_{\left( 12\right) }\xi \right) \in \mathrm{SU}%
\left( 2\right) \cdot o=\mathrm{graph}\left( R_{\left( 12\right) }\right) $
then 
\begin{equation*}
\overline{w}\circ R_{w}\left( \xi ,R_{\left( 12\right) }\xi \right) =%
\overline{w}\left( \xi ,R_{\left( 12\right) }^{2}\xi \right) =\overline{w}%
\left( \xi ,\xi \right)
\end{equation*}%
since $R_{\left( 12\right) }^{2}=\mathrm{id}$. Therefore, $\overline{w}\circ
R_{w}\left( \xi ,R_{\left( 12\right) }\xi \right) =\left( \xi ,r\xi \right) $%
. Reciprocally $\sigma ^{-1}\left( \xi ,\eta \right) \in \mathrm{graph}%
\left( R_{\left( 12\right) }\right) $ if $\left( \xi ,\eta \right) \in 
\mathrm{graph}\left( r\right) $.

The image $\sigma \left( \mathrm{SU}\left( 2\right) \cdot o\right) =\mathrm{%
graph}\left( r\right) $ is not contained in the open $G$-orbit of the
diagonal action. The reason is that $r$ has two fixed points, that are the
subspaces generated by $\xi ^{+}=\left( 1,i\right) $ and $\xi ^{-}=\left(
1,-i\right) $. Hence $\left( \xi ^{\pm },\xi ^{\pm }\right) \in \mathrm{graph%
}\left( r\right) \cap \Delta $.

Since $\xi ^{\pm }$ are the only fixed points of $r$, 
\begin{equation*}
\mathrm{graph}\left( r\right) \cap G\cdot o=\{\left( \xi ,r\xi \right) :\xi
\neq \xi ^{\pm }\}.
\end{equation*}%
Take $\xi \neq \xi ^{\pm }$ generated by $\left( x,y\right) $. Then, $r\xi
\neq \xi $ and is generated by $\left( -y,x\right) $. The condition for $\xi
\neq \xi ^{\pm }$ is $x^{2}+y^{2}\neq 0$ which can be normalized to $%
x^{2}+y^{2}=1$. Using expression (\ref{formatrizautoesp}) the pair $\left(
\xi ,r\xi \right) $, $\xi \neq \xi ^{\pm }$, is identified to the matrix 
\begin{equation*}
\left( 
\begin{array}{cc}
x & -y \\ 
y & x%
\end{array}%
\right) \left( 
\begin{array}{cc}
1 & 0 \\ 
0 & -1%
\end{array}%
\right) \left( 
\begin{array}{cc}
x & y \\ 
-y & x%
\end{array}%
\right) =\left( 
\begin{array}{cc}
x^{2}-y^{2} & 2xy \\ 
2xy & -x^{2}+y^{2}%
\end{array}%
\right) .
\end{equation*}

Another example of graph is the map $m\circ R_{w}$ where 
\begin{equation*}
m=\left( 
\begin{array}{cc}
e^{i\pi } & 0 \\ 
0 & e^{-i\pi }%
\end{array}%
\right) =\left( 
\begin{array}{cc}
1 & 0 \\ 
0 & -1%
\end{array}%
\right) ,
\end{equation*}%
that is, 
\begin{equation*}
m\circ R_{w}\left[ \left( x,y\right) \right] =\left[ \left( \overline{y},%
\overline{x}\right) \right] .
\end{equation*}%
The straight lines $\left[ \left( 1,1\right) \right] $ e $\left[ \left(
1,-1\right) \right] $ are fixed points of $m\circ R_{w}$. Therefore, the
graph intercepts the diagonal, and consequently its intersection with $%
G\cdot o$ is not compact. By formula (\ref{formatrizautoesp}), if $%
|x|^{2}-|y|^{2}=1$ (so that the determinant is $1$) then matrices in $%
\mathcal{O}\left( H_{0}\right) $ have the form 
\begin{equation*}
\left( 
\begin{array}{cc}
x & \overline{y} \\ 
y & \overline{x}%
\end{array}%
\right) \left( 
\begin{array}{cc}
1 & 0 \\ 
0 & -1%
\end{array}%
\right) \left( 
\begin{array}{cc}
\overline{x} & -\overline{y} \\ 
-y & x%
\end{array}%
\right) =\left( 
\begin{array}{cc}
|x|^{2}+|y|^{2} & -2x\overline{y} \\ 
2y\overline{x} & -|x|^{2}-|y|^{2}%
\end{array}%
\right) .
\end{equation*}%
This is the intersection of the orbit with the subspace $\mathfrak{h}_{%
\mathbb{R}}$ plus the skew-Hermitian matrices with $0$'s in the diagonal. 

%

\subsection{Hermitian structures and symplectic forms}

Suppose here that $\mathfrak{g}$ is a complex algebra and take a Weyl basis $%
X_{\alpha }\in \mathfrak{g}_{\alpha }$. The real compact form $\mathfrak{u}$
is generated by $A_{\alpha }=X_{\alpha }-X_{-\alpha }$ and $Z_{\alpha
}=iS_{\alpha }=i\left( X_{\alpha }+X_{-\alpha }\right) $ with $\alpha >0$.
If $\mathfrak{u}_{\alpha }=\mathrm{span}\{A_{\alpha },Z_{\alpha }\}$ then
the tangent space at the origin $b_{H_{0}}$ of $\mathbb{F}_{H_{0}}$ is
isomorphic to 
\begin{equation*}
T_{H_{0}}=\sum_{\alpha \left( H_{0}\right) >0}\mathfrak{u}_{\alpha }
\end{equation*}%
via the isomorphism 
\begin{equation*}
Y\in T_{H_{0}}\mapsto \widetilde{Y}\left( b_{H_{0}}\right) =\frac{d}{dt}%
\left( e^{tY}\cdot b_{H_{0}}\right) _{\left\vert t=0\right. }\in
T_{b_{H_{0}}}\mathbb{F}_{H_{0}}.
\end{equation*}%
The canonical complex structure $J$ on $\mathbb{F}_{H_{0}}$ is invariant by
the compact group $K=\exp \mathfrak{u}$ and at the origin of the subspaces $%
\mathfrak{u}_{\alpha }$, $\alpha >0$, it is given by 
\begin{equation*}
JA_{\alpha }=Z_{\alpha }\qquad JZ_{\alpha }=-A_{\alpha }.
\end{equation*}%
%
%
%
%
%
%
%
%
%
%
%
%
%

\begin{proposition}
\label{propestrcomplxoutros}Let $\widetilde{w}$ be a representative of $w\in 
\mathcal{W}$. Then the tangent space to $\widetilde{w}H_{0}$ is identified
with 
\begin{equation*}
T_{\widetilde{w}H_{0}}=\sum_{\alpha \left( \widetilde{w}H_{0}\right) >0}%
\mathfrak{u}_{\alpha }
\end{equation*}%
and the canonical complex structure $J_{w}$ on $T_{\widetilde{w}H_{0}}$ is
given by 
\begin{equation*}
JA_{\alpha }=Z_{\alpha }\qquad JZ_{\alpha }=-A_{\alpha }
\end{equation*}%
with $A_{\alpha }$ and $Z_{\alpha }$, with the caveat that we take roots $%
\alpha $ such that $\alpha \left( \widetilde{w}H_{0}\right) >0$ (which are
not in general positive roots).
\end{proposition}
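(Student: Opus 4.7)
The plan is to transport the statement at $\widetilde{w}H_{0}$ back to the origin $b_{H_{0}}$ using the $K$-invariance of both the identification of tangent spaces and of the canonical complex structure. First I would identify $T_{\widetilde{w}H_{0}}\mathbb{F}_{H_{0}}$ with a subspace of $\mathfrak{u}$ via the infinitesimal action $Y\mapsto \widetilde{Y}(\widetilde{w}H_{0})=[Y,\widetilde{w}H_{0}]$ (viewing $\mathbb{F}_{H_{0}}$ as the $K$-orbit of $H_{0}$ inside $\mathfrak{s}$). The kernel of this map is the centralizer of $\widetilde{w}H_{0}$ in $\mathfrak{u}$, which is the direct sum of the Cartan part and $\sum_{\alpha(\widetilde{w}H_{0})=0}\mathfrak{u}_{\alpha}$. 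A natural $\mathfrak{u}$-complement (using that $\mathfrak{u}_{-\alpha}=\mathfrak{u}_{\alpha}$) is exactly $\sum_{\alpha(\widetilde{w}H_{0})>0}\mathfrak{u}_{\alpha}$, which gives the first claim.

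For the complex structure, I would exploit the $K$-invariance of $J$: left translation $L_{\widetilde{w}}$ carries $b_{H_{0}}$ to $\widetilde{w}\cdot b_{H_{0}}=\widetilde{w}H_{0}$ and its differential intertwines $J_{b_{H_{0}}}$ with $J_{\widetilde{w}H_{0}}$. A short computation with the infinitesimal action shows that, under the identification of both tangent spaces with subspaces of $\mathfrak{u}$, the differential $dL_{\widetilde{w}}$ becomes exactly $\mathrm{Ad}(\widetilde{w})$. Since $\mathrm{Ad}(\widetilde{w})$ maps $\mathfrak{u}_{\alpha}$ onto $\mathfrak{u}_{w\alpha}$, the origin subspace $\sum_{\alpha(H_{0})>0}\mathfrak{u}_{\alpha}$ is taken onto $\sum_{\beta(\widetilde{w}H_{0})>0}\mathfrak{u}_{\beta}$ (substituting $\beta=w\alpha$), reconfirming the first claim, and the intertwining identity reads $J_{w}\circ \mathrm{Ad}(\widetilde{w})=\mathrm{Ad}(\widetilde{w})\circ J_{b_{H_{0}}}$.

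To conclude I would verify that the Weyl basis, normalized for the compact form $\mathfrak{u}$, behaves well under $\mathrm{Ad}(\widetilde{w})$: writing $\mathrm{Ad}(\widetilde{w})X_{\alpha}=c_{\alpha}X_{w\alpha}$, the fact that $\widetilde{w}\in K$ commutes with the Cartan conjugation $\tau$ (satisfying $\tau X_{\alpha}=-X_{-\alpha}$) forces $c_{-\alpha}=\overline{c_{\alpha}}$, and preservation of $\mathfrak{u}$ makes $c_{\alpha}$ real, hence $c_{\alpha}=c_{-\alpha}=\pm 1$. Consequently $\mathrm{Ad}(\widetilde{w})A_{\alpha}=c_{\alpha}A_{w\alpha}$ and $\mathrm{Ad}(\widetilde{w})Z_{\alpha}=c_{\alpha}Z_{w\alpha}$, and applying the intertwining identity to $A_{\alpha}$ using the origin formula $J_{b_{H_{0}}}A_{\alpha}=Z_{\alpha}$ yields $c_{\alpha}J_{w}A_{w\alpha}=c_{\alpha}Z_{w\alpha}$, i.e.\ $J_{w}A_{\beta}=Z_{\beta}$ with $\beta=w\alpha$; the formula $J_{w}Z_{\beta}=-A_{\beta}$ follows identically or from $J_{w}^{2}=-I$.

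The main obstacle is the sign analysis in the last paragraph: the clean form $\mathrm{Ad}(\widetilde{w})A_{\alpha}=\pm A_{w\alpha}$ (rather than a mixture of $A_{w\alpha}$ and $Z_{w\alpha}$ inside $\mathfrak{u}_{w\alpha}$) requires a Weyl basis with the Chevalley-type normalization $\tau X_{\alpha}=-X_{-\alpha}$ compatible with $\mathfrak{u}$, and a choice of representative $\widetilde{w}\in \mathrm{Norm}_{K}(A)$ for which the resulting scalars $c_{\alpha}$ are real. Once this setup is fixed, as it is implicitly in the section where the Weyl basis $A_{\alpha},Z_{\alpha}$ is introduced, the argument reduces to transporting an identity at the origin by a group element, which is a routine application of homogeneity.
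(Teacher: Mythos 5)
Your strategy is the same as the paper's: transport the origin formula $J_{0}A_{\alpha }=Z_{\alpha }$ to the point $\widetilde{w}H_{0}$ using invariance of $J$, the identification of the tangent spaces with sums of the $\mathfrak{u}_{\alpha }$, and the fact that under this identification the differential of the action of $\widetilde{w}$ is $\mathrm{Ad}\left( \widetilde{w}\right) $. The first claim and the intertwining relation $J_{w}\circ \mathrm{Ad}\left( \widetilde{w}\right) =\mathrm{Ad}\left( \widetilde{w}\right) \circ J_{0}$ are fine. The gap is in your last step: the assertion that the scalars $c_{\alpha }$ in $\mathrm{Ad}\left( \widetilde{w}\right) X_{\alpha }=c_{\alpha }X_{w\alpha }$ are real. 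Preservation of $\mathfrak{u}$ does \emph{not} force this. What it gives, together with $\tau X_{\alpha }=-X_{-\alpha }$ and the $\mathrm{Ad}$-invariance of the Killing form (which yields $c_{\alpha }c_{-\alpha }=1$), is only $c_{-\alpha }=\overline{c_{\alpha }}$ and $|c_{\alpha }|=1$: writing $c_{\alpha }=a+ib$ one computes $\mathrm{Ad}\left( \widetilde{w}\right) A_{\alpha }=aA_{w\alpha }+bZ_{w\alpha }$ and $\mathrm{Ad}\left( \widetilde{w}\right) Z_{\alpha }=-bA_{w\alpha }+aZ_{w\alpha }$, which lie in $\mathfrak{u}$ for \emph{every} unimodular $c_{\alpha }$. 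And $c_{\alpha }$ is genuinely non-real for general representatives: replacing $\widetilde{w}$ by $\widetilde{w}m$ with $m=\exp \left( iH^{\prime }\right) $ in the maximal torus multiplies $c_{\alpha }$ by the unimodular factor $e^{\pm i\alpha \left( H^{\prime }\right) }$. Since the proposition is stated for an arbitrary representative, restricting to representatives with real $c_{\alpha }$ (whose existence would itself need an argument) proves less than what is claimed.

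Fortunately reality of $c_{\alpha }$ is not needed, and the repair is short. Either (i) note that in the bases $\left( A_{\alpha },Z_{\alpha }\right) $ and $\left( A_{w\alpha },Z_{w\alpha }\right) $ the map $\mathrm{Ad}\left( \widetilde{w}\right) \colon \mathfrak{u}_{\alpha }\rightarrow \mathfrak{u}_{w\alpha }$ computed above is a rotation, and rotations of the plane commute with the quarter-turn $A\mapsto Z$, $Z\mapsto -A$, so the intertwining relation still forces $J_{w}A_{w\alpha }=Z_{w\alpha }$; or (ii) argue on the complexification, which is what the paper does: $J_{0}$ acts as $+i$ on $\mathfrak{g}_{\alpha }$ and as $-i$ on $\mathfrak{g}_{-\alpha }$ for $\alpha >0$, and $\mathrm{Ad}\left( \widetilde{w}\right) \mathfrak{g}_{\pm \alpha }=\mathfrak{g}_{\pm w\alpha }$ regardless of the constants, so the conjugate $J_{w}$ acts as $+i$ precisely on the $\mathfrak{g}_{\beta }$ with $\beta \left( \widetilde{w}H_{0}\right) >0$. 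The constants cancel when one conjugates forward and back --- this is exactly the role of the identity $k\left( \widetilde{w},w^{-1}\alpha \right) k\left( \widetilde{w}^{-1},\alpha \right) =1$ in the paper's computation, which requires no reality or normalization of the Weyl basis beyond $\tau X_{\alpha }=-X_{-\alpha }$.
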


\begin{proof}
Since $J$ in invariant, $J_{w}=d\widetilde{w}_{H_{0}}\circ J_{0}\circ \left(
d\widetilde{w}_{H_{0}}\right) ^{-1}$, where for now $J_{0}$ denotes the
structure on $T_{H_{0}}$. Take $A_{\alpha }$ with $\alpha \left( \widetilde{w%
}H_{0}\right) >0$. Then, 
\begin{eqnarray*}
\left( d\widetilde{w}_{H_{0}}\right) ^{-1}\left( \widetilde{A}_{\alpha
}\left( \widetilde{w}H_{0}\right) \right) &=&d\left( \widetilde{w}%
^{-1}\right) _{H_{0}}\left( \widetilde{A}_{\alpha }\left( \widetilde{w}%
H_{0}\right) \right) \\
&=&\left( \mathrm{Ad}\left( \widetilde{w}^{-1}\right) A_{\alpha }\right)
^{\sim }\left( \widetilde{w}H_{0}\right) .
\end{eqnarray*}%
But,%
\begin{equation*}
\ \mathrm{Ad}\left( \widetilde{w}^{-1}\right) A_{\alpha }=\mathrm{Ad}\left( 
\widetilde{w}^{-1}\right) X_{\alpha }-\mathrm{Ad}\left( \widetilde{w}%
^{-1}\right) X_{-\alpha }=k\left( \widetilde{w}^{-1},\alpha \right)
X_{w^{-1}\alpha }-k\left( \widetilde{w}^{-1},-\alpha \right)
X_{-w^{-1}\alpha }.
\end{equation*}%
Applying $J_{0}$ (or rather its complexification) to this equality we obtain 
\begin{equation*}
J_{0}\circ \left( d\widetilde{w}_{H_{0}}\right) ^{-1}\widetilde{A}_{\alpha
}\left( \widetilde{w}H_{0}\right) =\left\{ 
\begin{array}{lll}
ik\left( \widetilde{w}^{-1},\alpha \right) X_{w^{-1}\alpha }+ik\left( 
\widetilde{w}^{-1},-\alpha \right) X_{-w^{-1}\alpha } & \mathrm{if} & 
w^{-1}\alpha >0 \\ 
-ik\left( \widetilde{w}^{-1},\alpha \right) X_{w^{-1}\alpha }-ik\left( 
\widetilde{w}^{-1},-\alpha \right) X_{-w^{-1}\alpha } & \mathrm{if} & 
w^{-1}\alpha <0.%
\end{array}%
\right.
\end{equation*}
Finally, applying $\mathrm{Ad}\left( \widetilde{w}\right) $ to the last term
we get 
\begin{equation*}
J_{w}\left( \widetilde{A}_{\alpha }\left( \widetilde{w}H_{0}\right) \right)
=\left\{ 
\begin{array}{lll}
iX_{\alpha }+iX_{-\alpha } & \mathrm{if} & w^{-1}\alpha >0 \\ 
-iX_{\alpha }-iX_{-\alpha } & \mathrm{if} & w^{-1}\alpha <0,%
\end{array}%
\right.
\end{equation*}
since $k\left( \widetilde{w},w^{-1}\alpha \right) k\left( \widetilde{w}%
^{-1},\alpha \right) =k\left( \widetilde{w},-w^{-1}\alpha \right) k\left( 
\widetilde{w}^{-1},-\alpha \right) =1$. A similar calculation gives 
\begin{equation*}
J_{w}\left( \widetilde{Z}_{\alpha }\left( \widetilde{w}H_{0}\right) \right)
=\left\{ 
\begin{array}{lll}
-\left( X_{\alpha }-X_{-\alpha }\right) & \mathrm{if} & w^{-1}\alpha >0 \\ 
X_{\alpha }-X_{-\alpha } & \mathrm{if} & w^{-1}\alpha <0.%
\end{array}%
\right.
\end{equation*}

However, $w^{-1}\alpha \left( H_{0}\right) =\alpha \left( wH_{0}\right) $,
which is $>0$, by hypothesis. Since $H_{0}$ belongs to the closure of the
Weyl chamber, we conclude that $w^{-1}\alpha \left( H_{0}\right) >0$.
\end{proof}

Every $K$-invariant Riemmannian metric on $\mathbb{F}_{H_{0}}$ is almost
Hermitian with respect to $J$ (see \cite{invher}). In general, the
corresponding K\"{a}hler form $\Omega $ is not closed and consequently not
symplectic. However, the K\"{a}hler form is symplectic for the case of the
Borel metric $\left( \cdot ,\cdot \right) ^{B}$, which is the $K$-invariant
metric defined at the origin by $\left( \mathfrak{u}_{\alpha },\mathfrak{u}%
_{\alpha }\right) ^{B}=0$ if $\alpha \neq \beta $ and satisfying 
\begin{equation*}
\left( \widetilde{A}_{\alpha }\left( H_{0}\right) ,\widetilde{A}_{\alpha
}\left( H_{0}\right) \right) _{H_{0}}^{B}=\left( \widetilde{Z}_{\alpha
}\left( H_{0}\right) ,\widetilde{Z}_{\alpha }\left( H_{0}\right) \right)
_{H_{0}}^{B}=\alpha \left( H_{0}\right)
\end{equation*}
\begin{equation*}
\left( \widetilde{A}_{\alpha }\left( H_{0}\right) ,\widetilde{Z}_{\alpha
}\left( H_{0}\right) \right) _{H_{0}}^{B}=0
\end{equation*}%
if $\alpha \left( H_{0}\right) >0$.

This description of the Borel metric also holds at other points of $\mathbb{F%
}_{H_{0}}=\mathrm{Ad}\left( U\right) \cdot H_{0}$. For example, the tangent
space at $\mathrm{Ad}\left( \widetilde{w}\right) \cdot H_{0}$ is $%
\sum_{\alpha \left( w\cdot H_{0}\right) >0}\mathfrak{u}_{\alpha }$ and the
metric at $\mathfrak{u}_{\alpha }$ is given by the same expression provided $%
\alpha \left( w\cdot H_{0}\right) >0$.

\begin{proposition}
The map $R_{w_{0}}\colon \mathbb{F}_{H_{0}}\rightarrow \mathbb{F}%
_{H_{0}^{\ast }}$ is an isometry of Borel metrics. 
\end{proposition}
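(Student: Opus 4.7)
The plan is to exploit that $R_{w_0}$ intertwines the left $K$-actions on $\mathbb{F}_{H_0}$ and $\mathbb{F}_{H_0^*}$, because left and right translations commute. Since both Borel metrics are $K$-invariant, the pullback $R_{w_0}^*(\cdot,\cdot)^B_{\mathbb{F}_{H_0^*}}$ is a $K$-invariant symmetric tensor on $\mathbb{F}_{H_0}$, so it suffices to check that it coincides with $(\cdot,\cdot)^B_{\mathbb{F}_{H_0}}$ at a single point, which we take to be the origin $b_{H_0}$.

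Next I would compute the differential $(dR_{w_0})_{b_{H_0}}$. Writing $R_{w_0}(kK_{H_0})=k\widetilde{w_0}K_{H_0^*}$ and using that left and right actions commute, for any $X\in\mathfrak{u}$ the induced vector field $\widetilde{X}$ satisfies
\begin{equation*}
(dR_{w_0})_{b_{H_0}}\widetilde{X}(b_{H_0})=\widetilde{X}(\widetilde{w_0}\cdot b_{H_0^*})=(L_{\widetilde{w_0}})_*\widetilde{\mathrm{Ad}(\widetilde{w_0}^{-1})X}(b_{H_0^*}).
\end{equation*}
Because $\widetilde{w_0}\in K$ acts by an isometry of the Borel metric on $\mathbb{F}_{H_0^*}$, the factor $L_{\widetilde{w_0}}$ can be stripped off, reducing the statement to the assertion that the linear map $\mathrm{Ad}(\widetilde{w_0}^{-1}):\mathfrak{u}\to\mathfrak{u}$, viewed between the two tangent spaces at the respective origins, preserves the Borel inner products.

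On the root level, $\mathrm{Ad}(\widetilde{w_0}^{-1})$ maps each $\mathfrak{u}_\alpha$ onto $\mathfrak{u}_{w_0^{-1}\alpha}=\mathfrak{u}_{-w_0\alpha}$, and setting $\beta=-w_0\alpha$ a direct computation gives
\begin{equation*}
\beta(H_0^*)=(-w_0\alpha)(-w_0H_0)=\alpha(H_0),
\end{equation*}
so the conditions $\alpha(H_0)>0$ and $\beta(H_0^*)>0$ match bijectively, and the Borel weight $\alpha(H_0)$ on $\mathfrak{u}_\alpha$ at $b_{H_0}$ equals the Borel weight $\beta(H_0^*)$ on $\mathfrak{u}_\beta$ at $b_{H_0^*}$. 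Moreover $\mathrm{Ad}(\widetilde{w_0}^{-1})$ preserves the $K$-invariant inner product $-\langle\cdot,\cdot\rangle$ underlying the Borel normalization and sends the orthogonal decomposition $\bigoplus_{\alpha(H_0)>0}\mathfrak{u}_\alpha$ to the orthogonal decomposition $\bigoplus_{\beta(H_0^*)>0}\mathfrak{u}_\beta$. Combining these three facts yields the isometry property at $b_{H_0}$, and hence everywhere by $K$-equivariance.

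The main obstacle is the bookkeeping around $\mathrm{Ad}(\widetilde{w_0}^{-1})$ on Weyl basis elements $X_{\pm\alpha}$: one must confirm that the identification $\mathfrak{u}_{w_0^{-1}\alpha}=\mathfrak{u}_{-w_0\alpha}$, together with the sign conventions arising from $\mathrm{Ad}(\widetilde{w_0}^{-1})X_\alpha=\pm X_{w_0^{-1}\alpha}$, is compatible with the parametrization of tangent directions at $b_{H_0^*}$ by positive roots for $H_0^*$, so that the single identity $\alpha(H_0)=(-w_0\alpha)(H_0^*)$ really does deliver the matching of the two Borel normalizations.
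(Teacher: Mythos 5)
Your argument is correct and follows essentially the same path as the paper's: reduce to a single point by $K$-invariance, use $(R_{w_0})_*\widetilde{A}=\widetilde{A}$ (Proposition \ref{propcampoinva}), and match the Borel weights via $\alpha(H_0)=(-w_0\alpha)(H_0^{\ast})$, which is the paper's identity $-\alpha(-H_0)=\alpha(H_0)$ in disguise. The only cosmetic difference is that you conjugate by $L_{\widetilde{w}_0}$ so as to compare the metrics at the two origins through $\mathrm{Ad}(\widetilde{w}_0^{-1})$, whereas the paper evaluates the Borel metric directly at the image point $-H_0=\widetilde{w}_0\cdot b_{H_0^{\ast}}$ using its formula at non-origin points; the sign bookkeeping you flag at the end is already absorbed by your observation that $\mathrm{Ad}(\widetilde{w}_0^{-1})$ preserves the Killing form, given the uniform Weyl-basis normalization $\langle X_{\alpha},X_{-\alpha}\rangle=1$.
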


\begin{proof}
Since $R_{w_{0}}$ is equivariant by the left actions on $\mathbb{F}_{H_{0}}$
and $\mathbb{F}_{H_{0}^{\ast }}$ and the metrics are $K$-invariant, it
suffices to verify the isometry at the origin. Equivariance also also
implies that $\left( R_{w_{0}}\right) _{\ast }\widetilde{A}=\widetilde{A}$.
Thus, 
\begin{equation*}
\left( \left( dR_{w_{0}}\right) _{x}\widetilde{A}\left( x\right) ,\left(
dR_{w_{0}}\right) _{x}\widetilde{B}\left( x\right) \right) _{R_{w_{0}}\left(
x\right) }^{B}=\left( \widetilde{A}\left( R_{w_{0}}\left( x\right) \right) ,%
\widetilde{B}\left( R_{w_{0}}\left( x\right) \right) \right)
_{R_{w_{0}}\left( x\right) }^{B}
\end{equation*}%
for $x\in \mathbb{F}_{H_{0}}$. At $x=H_{0}\in \mathbb{F}_{H_{0}}$ we have $%
R_{w_{0}}\left( H_{0}\right) =w_{0}H^{\ast }=-H_{0}$. Now, if $\alpha \left(
H_{0}\right) >0$, then 
\begin{equation*}
\left( \widetilde{A}_{\alpha }\left( H_{0}\right) ,\widetilde{A}_{\alpha
}\left( H_{0}\right) \right) _{H_{0}}^{B}=\alpha \left( H_{0}\right)
\end{equation*}%
and the second term of the previous equality for $A=B=A_{\alpha }$ is 
\begin{equation*}
\left( \widetilde{A}_{\alpha }\left( -H_{0}\right) ,\widetilde{A}_{\alpha
}\left( -H_{0}\right) \right) _{H_{0}}^{B}=-\alpha \left( -H_{0}\right)
=\alpha \left( H_{0}\right) .
\end{equation*}%
The same holds true for $Z_{\alpha }$ corresponding to any root $\alpha $
with $\alpha \left( H_{0}\right) >0$, so 
\begin{equation*}
\left( \left( dR_{w_{0}}\right) _{H_{0}}\widetilde{A}\left( H_{0}\right)
,\left( dR_{w_{0}}\right) _{H_{0}}\widetilde{B}\left( H_{0}\right) \right)
_{-H_{0}}^{B}=\left( \widetilde{A}\left( H_{0}\right) ,\widetilde{B}\left(
H_{0}\right) \right) _{H_{0}}^{B}
\end{equation*}%
for arbitrary $A$ and $B$. This shows that $R_{w_{0}}$ is an isometry.
\end{proof}

Having obtained the isometry $R_{w_{0}}$, its holomorphicity provides us
with the symplectic isomorphism.

\begin{proposition}
\label{ah} The map $R_{w_{0}}\colon \mathbb{F}_{H_{0}}\rightarrow \mathbb{F}%
_{H_{0}^{\ast }}$ is \textbf{anti}-holomorphic with respect to the canonical
complex structures on $\mathbb{F}_{H_{0}}$ and $\mathbb{F}_{H_{0}^{\ast }}$.
\end{proposition}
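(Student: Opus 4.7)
The plan is to exploit the $K$-equivariance of $R_{w_0}$, reducing the verification to a single point, and then read off the action on tangent vectors using Proposition \ref{propestrcomplxoutros}.

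First I would observe that both canonical complex structures are $K$-invariant and $R_{w_0}$ commutes with the left $K$-action on $\mathbb{F}_{H_0}$ and $\mathbb{F}_{H_0^{\ast }}$. Therefore it suffices to verify $dR_{w_0}\circ J = -J^{\prime }\circ dR_{w_0}$ at the single point $H_0\in \mathbb{F}_{H_0}$, where $J$ denotes the complex structure on $\mathbb{F}_{H_0}$ and $J^{\prime }$ the one on $\mathbb{F}_{H_0^{\ast }}$. Recall from the previous proof that $R_{w_0}(H_0)=-H_0$, viewed as a point of $\mathbb{F}_{H_0^{\ast }}$, which is precisely $\widetilde{w}_0$ applied to the origin of $\mathbb{F}_{H_0^{\ast }}$.

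Next, at $H_0$ the tangent space has the basis $\{\widetilde{A}_\alpha(H_0),\widetilde{Z}_\alpha(H_0):\alpha(H_0)>0\}$ and $J\widetilde{A}_\alpha=\widetilde{Z}_\alpha$, $J\widetilde{Z}_\alpha=-\widetilde{A}_\alpha$. To describe $J^{\prime }$ at $-H_0\in \mathbb{F}_{H_0^{\ast }}$, I would apply Proposition \ref{propestrcomplxoutros} with $w=w_0$ to the flag $\mathbb{F}_{H_0^{\ast }}$: the tangent space at $-H_0$ has basis $\{\widetilde{A}_\beta(-H_0),\widetilde{Z}_\beta(-H_0):\beta(-H_0)>0\}$, that is, indexed by $\beta=-\alpha$ with $\alpha(H_0)>0$. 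Using the identities $A_{-\alpha}=-A_\alpha$ and $Z_{-\alpha}=Z_\alpha$ built into the Weyl basis, I re-express everything in the basis $\{\widetilde{A}_\alpha(-H_0),\widetilde{Z}_\alpha(-H_0)\}$ with $\alpha(H_0)>0$, obtaining
\begin{equation*}
J^{\prime }\widetilde{A}_\alpha(-H_0)=-\widetilde{Z}_\alpha(-H_0),\qquad J^{\prime }\widetilde{Z}_\alpha(-H_0)=\widetilde{A}_\alpha(-H_0).
\end{equation*}

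Finally, by Proposition \ref{propcampoinva} the differential $dR_{w_0}$ carries $\widetilde{A}(H_0)$ to $\widetilde{A}(-H_0)$ for every $A\in \mathfrak{u}$. Comparing the two sides for each basis vector,
\begin{equation*}
dR_{w_0}(J\widetilde{A}_\alpha(H_0))=\widetilde{Z}_\alpha(-H_0)=-J^{\prime }\widetilde{A}_\alpha(-H_0)=-J^{\prime }\,dR_{w_0}(\widetilde{A}_\alpha(H_0)),
\end{equation*}
and analogously for $\widetilde{Z}_\alpha$, which gives $dR_{w_0}\circ J=-J^{\prime }\circ dR_{w_0}$ on the whole tangent space at $H_0$, hence everywhere. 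The main delicate point will be step three: keeping track of the sign conventions $A_{-\alpha}=-A_\alpha$, $Z_{-\alpha}=Z_\alpha$ while moving from the basis natural to Proposition \ref{propestrcomplxoutros} (indexed by roots positive on $-H_0$) to the basis indexed by roots positive on $H_0$, and confirming that the hypothesis $w_0^{-1}\alpha(H_0)>0$ in that proposition is satisfied (since $H_0$ lies in $\mathrm{cl}\,\mathfrak{a}^+$, $w_0^{-1}\alpha(H_0)=\alpha(w_0 H_0)$ has the correct sign). Once the bookkeeping is done, anti-holomorphicity drops out immediately.
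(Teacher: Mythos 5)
Your proof is correct and follows essentially the same route as the paper: reduce to the origin by $K$-equivariance, read off $J$ at $H_0$ and $J'$ at $-H_0$ from Proposition \ref{propestrcomplxoutros} via the identities $A_{-\alpha}=-A_\alpha$, $Z_{-\alpha}=Z_\alpha$, and use $(R_{w_0})_\ast\widetilde{A}=\widetilde{A}$ to compare. The sign bookkeeping you flag as the delicate step is exactly the computation the paper carries out, and your version of it is right.
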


\begin{proof}
Let $\widetilde{w}_{0}$ be a representative of $w_{0}$ such that $\mathrm{Ad}%
\left( \widetilde{w}_{0}\right) H_{0}^{\ast }=-H_{0}$ and denote by $J_{0}$
and $J_{w_{0}}$ the complex structures no the tangent spaces $T_{H_{0}}%
\mathbb{F}_{H_{0}}$ and $T_{-H_{0}}\mathbb{F}_{H_{0}^{\ast }}$,
respectively. Take a root $\alpha $ com $\alpha \left( H_{0}\right) >0$,
that is, $\left( -\alpha \right) \left( -H_{0}\right) >0$. By proposition %
\ref{propestrcomplxoutros} we have 
\begin{equation*}
J_{0}\left( \widetilde{A}_{\alpha }\left( H_{0}\right) \right) =\widetilde{Z}%
_{\alpha }\left( H_{0}\right) \qquad J_{0}\left( \widetilde{Z}_{\alpha
}\left( H_{0}\right) \right) =-\widetilde{A}_{\alpha }\left( H_{0}\right)
\end{equation*}%
since $\alpha \left( H_{0}\right) >0$, and 
\begin{equation*}
J_{w_{0}}\left( \widetilde{A}_{\alpha }\left( -H_{0}\right) \right)
=-J_{w_{0}}\left( \widetilde{A}_{-\alpha }\left( -H_{0}\right) \right) =-%
\widetilde{Z}_{\alpha }\left( -H_{0}\right) 
\end{equation*}
\begin{equation*}
 J_{w_{0}}\left( 
\widetilde{Z}_{\alpha }\left( -H_{0}\right) \right) =-\widetilde{A}_{-\alpha
}\left( -H_{0}\right) =\widetilde{A}_{\alpha }\left( -H_{0}\right)
\end{equation*}%
since $\left( -\alpha \right) \left( -H_{0}\right) >0$.

On the other hand, $\left( R_{w_{0}}\right) _{\ast }\widetilde{A}_{\alpha }=%
\widetilde{A}_{\alpha }$ and $\left( R_{w_{0}}\right) _{\ast }\widetilde{Z}%
_{\alpha }=\widetilde{Z}_{\alpha }$. Therefore,%
\begin{eqnarray*}
J_{w_{0}}\left( \left( dR_{w_{0}}\right) _{H_{0}}\widetilde{A}_{\alpha
}\left( H_{0}\right) \right) &=&J_{w_{0}}\left( \widetilde{A}_{\alpha
}\left( -H_{0}\right) \right) =-\widetilde{Z}_{\alpha }\left( -H_{0}\right)
\\
J_{w_{0}}\left( \left( dR_{w_{0}}\right) _{H_{0}}\widetilde{Z}_{\alpha
}\left( H_{0}\right) \right) &=&J_{w_{0}}\left( \widetilde{Z}_{\alpha
}\left( -H_{0}\right) \right) =\widetilde{A}_{\alpha }\left( -H_{0}\right)
\end{eqnarray*}%
whereas 
\begin{eqnarray*}
\left( dR_{w_{0}}\right) _{H_{0}}J_{0}\left( \widetilde{A}_{\alpha }\left(
H_{0}\right) \right) &=&\widetilde{Z}_{\alpha }\left( -H_{0}\right) \\
\left( dR_{w_{0}}\right) _{H_{0}}J_{0}\left( \widetilde{Z}_{\alpha }\left(
H_{0}\right) \right) &=&-\widetilde{A}_{\alpha }\left( -H_{0}\right)
\end{eqnarray*}%
which shows that $R_{w_{0}}$ is anti-holomorphic at the origin, and
consequently, on the whole flag by the invariance of the complex structures.
\end{proof}

\begin{corollary}
If $k\in K$ then the composites $R_{w_{0}}\circ k$ e $k\circ R_{w_{0}}$ are
anti-holomorphic.
\end{corollary}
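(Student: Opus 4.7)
The plan is to deduce the corollary immediately from the preceding proposition by observing that left translations by elements of $K$ are holomorphic with respect to the canonical complex structures. The canonical complex structures $J$ on $\mathbb{F}_{H_0}$ and on $\mathbb{F}_{H_0^{\ast }}$ are $K$-invariant by construction (they were defined at the origin via the Weyl basis and extended by the $K$-action), so for every $k\in K$ the diffeomorphism $x\mapsto k\cdot x$ satisfies $dk\circ J = J\circ dk$ on each tangent space, i.e.\ $k$ is holomorphic.

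Given this, the argument is the standard one for composites. For $R_{w_0}\circ k\colon \mathbb{F}_{H_0}\to \mathbb{F}_{H_0^{\ast }}$, I would compute at an arbitrary point $x\in \mathbb{F}_{H_0}$:
\begin{equation*}
J_{H_0^{\ast }}\circ d(R_{w_0}\circ k)_x = J_{H_0^{\ast }}\circ (dR_{w_0})_{kx}\circ (dk)_x = -(dR_{w_0})_{kx}\circ J_{H_0}\circ (dk)_x,
\end{equation*}
using the anti-holomorphicity of $R_{w_0}$ from Proposition \ref{ah}. Then holomorphicity of $k$ gives $J_{H_0}\circ (dk)_x = (dk)_x\circ J_{H_0}$, so the right-hand side equals $-(dR_{w_0}\circ k)_x\circ J_{H_0} = -d(R_{w_0}\circ k)_x\circ J_{H_0}$, proving $R_{w_0}\circ k$ anti-holomorphic. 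The composite $k\circ R_{w_0}$ is handled symmetrically: one pushes $J_{H_0^{\ast }}$ through $(dk)$ first (holomorphicity of $k$ on $\mathbb{F}_{H_0^{\ast }}$) and then through $dR_{w_0}$ (anti-holomorphicity), collecting a single sign.

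There is no real obstacle here; the only point that needs care is making sure both flag manifolds carry $K$-invariant canonical complex structures, which is exactly how they were defined in the discussion preceding Proposition \ref{propestrcomplxoutros}. The statement is essentially the general principle \emph{holomorphic $\circ$ antiholomorphic $=$ antiholomorphic, and antiholomorphic $\circ$ holomorphic $=$ antiholomorphic}, applied in the equivariant setting of two homogeneous flag manifolds linked by $R_{w_0}$.
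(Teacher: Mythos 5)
Your argument is correct and is exactly the intended one: the paper states this corollary without proof because it follows immediately from the $K$-invariance of the canonical complex structures (so each $k\in K$ is holomorphic) together with Proposition \ref{ah}, and composing a holomorphic map with an anti-holomorphic one yields an anti-holomorphic map. Nothing is missing.
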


\begin{corollary}
\label{corantisimpl}Let $\Omega _{H_{0}}$ and $\Omega _{H_{0}^{\ast }}$ be
the K\"{a}hler forms of the Hermitian structures on $\mathbb{F}_{H_{0}}$ and 
$\mathbb{F}_{H_{0}^{\ast }}$ given by the Borel metric and the canonical
complex structures. Then, $R_{w_{0}}$ is anti-symplectic, that is, $%
R_{w_{0}}^{\ast }\Omega _{H_{0}^{\ast }}=-\Omega _{H_{0}}$.
\end{corollary}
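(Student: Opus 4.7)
The plan is to exploit the two properties of $R_{w_0}$ that have just been established: it is an isometry of the Borel metrics (previous proposition) and it is anti-holomorphic with respect to the canonical complex structures (Proposition \ref{ah}). These two ingredients combine formally to yield the anti-symplectic property, since the K\"ahler form on a K\"ahler manifold is built from the metric and the complex structure.

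More concretely, on each of $\mathbb{F}_{H_0}$ and $\mathbb{F}_{H_0^\ast}$ the K\"ahler form can be written in terms of the Borel metric $(\cdot,\cdot)^B$ and the canonical complex structure $J$ as
\begin{equation*}
\Omega(X,Y) = (JX,Y)^B.
\end{equation*}
Writing $\phi = R_{w_0}$, I would compute $\phi^\ast \Omega_{H_0^\ast}$ at a point $x \in \mathbb{F}_{H_0}$ by
\begin{equation*}
(\phi^\ast \Omega_{H_0^\ast})_x(X,Y) = \Omega_{H_0^\ast}(d\phi_x X, d\phi_x Y) = (J_{\phi(x)} d\phi_x X, d\phi_x Y)^B.
\end{equation*}
The anti-holomorphicity established in Proposition \ref{ah} gives $J_{\phi(x)} \circ d\phi_x = -d\phi_x \circ J_x$, so the inner product becomes $-(d\phi_x J_x X, d\phi_x Y)^B$. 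Then the isometry property of $R_{w_0}$ collapses this to $-(J_x X, Y)^B = -\Omega_{H_0}(X,Y)$, which is the desired identity.

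Since invariance of $J$ and of the Borel metric were used pointwise only at the origin in the proofs of the two preceding results, and both $R_{w_0}$ and the relation $\Omega(X,Y)=(JX,Y)^B$ are $K$-equivariant, the identity at a single point propagates to all of $\mathbb{F}_{H_0}$ automatically. The only thing to be careful with is sign conventions: verifying that the formula $\Omega(X,Y) = (JX,Y)^B$ (as opposed to $(X,JY)^B$) is the one compatible with the orientation in which $\Omega_{H_0}$ and $\Omega_{H_0^\ast}$ have been implicitly fixed. This is purely bookkeeping, not a genuine obstacle; no step requires further geometric input beyond what has already been proved.
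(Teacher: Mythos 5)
Your argument is correct and is precisely the (implicit) reasoning the paper intends: the corollary is stated without proof because it follows formally from the isometry property and Proposition \ref{ah} via the pullback computation $\Omega_{H_0^\ast}(d\phi X, d\phi Y) = -\Omega_{H_0}(X,Y)$, exactly as you wrote. Your remark on the sign convention is harmless — the paper later uses $\Omega(\cdot,\cdot)=(\cdot,J(\cdot))^B$ rather than $(J\cdot,\cdot)^B$, but the anti-symplectic conclusion is the same under either convention.
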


\subsection{Hermitian structures on products}

The product $\mathbb{F}_{H_{0}}\times \mathbb{F}_{H_{0}^{\ast }}$ is a flag
of the product $G\times G$ associated to $\left( H_{0},H_{0}^{\ast }\right) $%
, that is, $\mathbb{F}_{H_{0}}\times \mathbb{F}_{H_{0}^{\ast }}=\mathbb{F}%
_{\left( H_{0},H_{0}^{\ast }\right) }$. This flag has Borel metric and
invariant complex structures.

The adjoint orbit $\mathcal{O}\left( H_{0}\right) $ is identified to the
orbit $G\cdot \left( H_{0},-H_{0}\right) $ by the diagonal representation
(recall that $-H_{0}\in \mathbb{F}_{H_{0}^{\ast }}$ since $\mathrm{Ad}\left( 
\widetilde{w}_{0}\right) H_{0}^{\ast }=-H_{0}$ if $\widetilde{w}_{0}$ is a
representative of $w_{0}$). The adjoint orbit $\mathcal{O}\left(
H_{0}\right) $ has a complex structure inherited from the inclusion into $%
\mathfrak{g}$. 
On the other hand, the graphs considered above are Lagrangean with respect
to a symplectic form defined from the complex structures of the flags.
Hence, to continue our analysis we must compare these different complex
structures.

We take $\mathfrak{h}\times \mathfrak{h}$ as a Cartan subalgebra in $%
\mathfrak{g}\times \mathfrak{g}$. The roots of $\mathfrak{h}\times \mathfrak{%
h}$ are those of $\mathfrak{h}$ in each component and the root spaces are of
the form $\mathfrak{g}_{\alpha }\times \{0\}$ or $\{0\}\times \mathfrak{g}%
_{\alpha }$.

The tangent space $T_{\left( H_{0},-H_{0}\right) }\mathbb{F}_{\left(
H_{0},H_{0}^{\ast }\right) }$ is generated by $\left( A_{\alpha },0\right) $%
, $\left( Z_{\alpha },0\right) $, $\left( 0,A_{\alpha }\right) $ and $\left(
0,Z_{\alpha }\right) $. To obtain these generators, we can take the positive
roots $\alpha >0$. Here, if $\alpha $ is a positive root, then $\alpha
\left( H_{0}\right) >0$ but $\alpha \left( -H_{0}\right) <0$, determining a
difference between the complex structures of the first and second components.

In fact, if $\alpha >0$ and $\left( \mathfrak{u}\times \mathfrak{u}\right)
_{\alpha }$ denotes the space generated by the $4$ vectors above, then the
canonical complex structure on $\left( \mathfrak{u}\times \mathfrak{u}%
\right) _{\alpha }\subset T_{\left( H_{0},-H_{0}\right) }\mathbb{F}_{\left(
H_{0},H_{0}^{\ast }\right) }$ is given by 
\begin{equation}
\begin{array}{ccc}
J\left( A_{\alpha },0\right) =\left( Z_{\alpha },0\right) &  & J\left(
Z_{\alpha },0\right) =-\left( A_{\alpha },0\right) \\ 
J\left( 0,A_{\alpha }\right) =-\left( 0,Z_{\alpha }\right) &  & J\left(
0,Z_{\alpha }\right) =\left( 0,A_{\alpha }\right) .%
\end{array}
\label{forjotaprod}
\end{equation}

\begin{remark}
These expressions show that the canonical complex structure on $\mathbb{F}%
_{\left( H_{0},H_{0}^{\ast }\right) }=\mathbb{F}_{H_{0}}\times \mathbb{F}%
_{H_{0}^{\ast }}$ is the product of the canonical complex structures on $%
\mathbb{F}_{H_{0}}$ and $\mathbb{F}_{H_{0}^{\ast }}$.
\end{remark}

Another basis of the tangent space $T_{\left( H_{0},-H_{0}\right) }\mathbb{F}%
_{\left( H_{0},H_{0}^{\ast }\right) }$is given by the vectors $\left( 
\widetilde{X}_{-\alpha }\left( H_{0}\right) ,0\right) $, $\left( \widetilde{%
iX}_{-\alpha }\left( H_{0}\right) ,0\right) $, $\left( 0,\widetilde{X}%
_{\alpha }\left( -H_{0}\right) \right) $ e $\left( 0,\widetilde{iX}_{\alpha
}\left( -H_{0}\right) \right) $ with $\alpha $ running over the \textbf{%
positive} roots (since $T_{H_{0}}\mathbb{F}_{H_{0}}$ is generated by the
first two vectors and $T_{-H_{0}}\mathbb{F}_{H_{0}^{\ast }}$ by the second
pair). Still taking \textbf{positive} roots, these vectors satisfy:

\begin{enumerate}
\item $\left( \widetilde{X}_{-\alpha }\left( H_{0}\right) ,0\right) =-\left( 
\widetilde{A}_{\alpha }\left( H_{0}\right) ,0\right) $ since $A_{\alpha
}=X_{\alpha }-X_{-\alpha }$ and $\widetilde{X}_{\alpha }\left( H_{0}\right)
=0$.

\item $\left( \widetilde{iX}_{-\alpha }\left( H_{0}\right) ,0\right) =\left( 
\widetilde{Z}_{\alpha }\left( H_{0}\right) ,0\right) $ since $Z_{\alpha
}=iX_{\alpha }+iX_{-\alpha }$ and $\widetilde{iX}_{\alpha }\left(
H_{0}\right) =0$.

\item $\left( 0,\widetilde{X}_{\alpha }\left( -H_{0}\right) \right) =\left(
0,\widetilde{A}_{\alpha }\left( -H_{0}\right) \right) $ since $A_{\alpha
}=X_{\alpha }-X_{-\alpha }$ and $\widetilde{X}_{-\alpha }\left(
-H_{0}\right) =0$.

\item $\left( 0,\widetilde{iX}_{\alpha }\left( -H_{0}\right) \right) =\left(
0,\widetilde{Z}_{\alpha }\left( -H_{0}\right) \right) $ since $Z_{\alpha
}=iX_{\alpha }+iX_{-\alpha }$ and $\widetilde{iX}_{-\alpha }\left(
-H_{0}\right) =0$.
\end{enumerate}

Therefore, 
\begin{equation}
\begin{array}{ccc}
J\left( \widetilde{X}_{-\alpha }\left( H_{0}\right) ,0\right) =-\left( 
\widetilde{iX}_{-\alpha }\left( H_{0}\right) ,0\right) &  & J\left( 
\widetilde{iX}_{-\alpha }\left( H_{0}\right) ,0\right) =\left( \widetilde{X}%
_{-\alpha }\left( H_{0}\right) ,0\right) \\ 
J\left( 0,\widetilde{X}_{\alpha }\left( -H_{0}\right) \right) =-\left( 0,%
\widetilde{iX}_{\alpha }\left( -H_{0}\right) \right) &  & J\left( 0,%
\widetilde{iX}_{\alpha }\left( -H_{0}\right) \right) =\left( 0,\widetilde{X}%
_{\alpha }\left( -H_{0}\right) \right) .%
\end{array}
\label{forjotacanon}
\end{equation}

Using these calculations we obtain the following statement.

\begin{proposition}
Let $J^{\mathrm{in}}$ and $J$ be the following complex structures on $%
\mathcal{O}\left( H_{0}\right) \approx G\cdot \left( H_{0},-H_{0}\right) $:

\begin{enumerate}
\item $J^{\mathrm{in}}$ is the complex structure on $\mathcal{O}\left(
H_{0}\right) \subset \mathfrak{g}$ inherited from $\mathfrak{g}$ 

\item $J$ is the complex structure on $G\cdot \left( H_{0},-H_{0}\right) $
obtained by restriction of the complex structure on $\mathbb{F}%
_{H_{0}}\times \mathbb{F}_{H_{0}^{\ast }}$, defined at the origin by (\ref%
{forjotacanon}).
\end{enumerate}

Then $J^{\mathrm{in}}=-J$.
\end{proposition}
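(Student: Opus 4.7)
The plan is to verify $J^{\mathrm{in}}=-J$ at the single point corresponding to $H_{0}\in\mathcal{O}(H_{0})$ and $(H_{0},-H_{0})\in G\cdot(H_{0},-H_{0})$ under the equivariant diffeomorphism $\phi$, and then to invoke $G$-invariance of both structures to propagate the identity to all of $\mathcal{O}(H_{0})$. The key observations for invariance are that $\mathrm{Ad}(g)$ is complex linear on $\mathfrak{g}$, so $J^{\mathrm{in}}$ is $G$-invariant, and that $G$ acts holomorphically on each factor of $\mathbb{F}_{H_{0}}\times\mathbb{F}_{H_{0}^{\ast}}$, so $J$ is $G$-invariant. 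The intertwining relation between $d\phi$ and the two complex structures therefore holds everywhere as soon as it holds at one point.

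Next I would unwrap the differential $d\phi_{H_{0}}$. Writing $v_{Y}=\widetilde{Y}(H_{0})_{\mathcal{O}}$, $w_{Y}=\widetilde{Y}(H_{0})_{\mathbb{F}_{H_{0}}}$, and $u_{Y}=\widetilde{Y}(-H_{0})_{\mathbb{F}_{H_{0}^{\ast}}}$ for $Y\in\mathfrak{g}$, equivariance of $\phi$ gives $d\phi_{H_{0}}(v_{Y})=(w_{Y},u_{Y})$. The map $v_{\bullet}\colon\mathfrak{g}\to T_{H_{0}}\mathcal{O}(H_{0})$ has kernel $\mathfrak{z}_{H_{0}}$, and under the resulting identification $T_{H_{0}}\mathcal{O}(H_{0})\cong\mathfrak{g}/\mathfrak{z}_{H_{0}}$ the inherited structure $J^{\mathrm{in}}$ is simply multiplication by $i$ in $\mathfrak{g}$; equivalently $J^{\mathrm{in}}v_{Y}=v_{iY}$. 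The isotropies at the two flag origins are $\mathfrak{p}_{H_{0}}=\mathfrak{z}_{H_{0}}\oplus\mathfrak{n}_{H_{0}}^{+}$ and $\mathfrak{p}_{-H_{0}}=\mathfrak{z}_{H_{0}}\oplus\mathfrak{n}_{H_{0}}^{-}$, so for each positive root $\alpha$ with $\alpha(H_{0})>0$ the root vector $X_{-\alpha}$ lies in $\mathfrak{p}_{-H_{0}}$ (hence $u_{X_{-\alpha}}=0$) while $X_{\alpha}$ lies in $\mathfrak{p}_{H_{0}}$ (hence $w_{X_{\alpha}}=0$). Consequently
\[
d\phi_{H_{0}}(v_{X_{-\alpha}})=(w_{X_{-\alpha}},0),\qquad d\phi_{H_{0}}(v_{X_{\alpha}})=(0,u_{X_{\alpha}}),
\]
with the obvious analogues when $X_{\pm\alpha}$ is replaced by $iX_{\pm\alpha}$. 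These four families exhaust a real basis of both source and target, and produce precisely the basis of $T_{(H_{0},-H_{0})}\mathbb{F}_{(H_{0},H_{0}^{\ast})}$ used in (\ref{forjotacanon}).

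The comparison is then read off directly. For the first basis vector,
\[
d\phi_{H_{0}}\bigl(J^{\mathrm{in}}v_{X_{-\alpha}}\bigr)=d\phi_{H_{0}}(v_{iX_{-\alpha}})=(w_{iX_{-\alpha}},0),
\]
whereas (\ref{forjotacanon}) gives $J(w_{X_{-\alpha}},0)=-(w_{iX_{-\alpha}},0)$; the two sides differ by a sign. Repeating the computation on $v_{iX_{-\alpha}}$, $v_{X_{\alpha}}$, and $v_{iX_{\alpha}}$ is a direct application of the remaining three lines of (\ref{forjotacanon}) and yields the same sign each time, so $d\phi_{H_{0}}\circ J^{\mathrm{in}}=-J\circ d\phi_{H_{0}}$, i.e.\ $J^{\mathrm{in}}=-J$ once $\mathcal{O}(H_{0})$ and $G\cdot(H_{0},-H_{0})$ are identified via $\phi$.

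The computation itself is routine; the main obstacle, really a bookkeeping point, is keeping the two realizations of $T_{H_{0}}\mathcal{O}(H_{0})$ aligned: the one inherited from $\mathcal{O}(H_{0})\subset\mathfrak{g}$ that makes $J^{\mathrm{in}}$ multiplication by $i$, and the one coming from the double flag via $d\phi_{H_{0}}$ that makes the formulas (\ref{forjotacanon}) directly applicable. Once the correct root vectors are matched to the correct components of $(w_{\bullet},u_{\bullet})$, the sign comparison is immediate.
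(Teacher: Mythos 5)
Your proposal is correct and follows essentially the same route as the paper: reduce to the origin by $G$-invariance, observe that under the identification with $G\cdot(H_{0},-H_{0})$ the root vectors $X_{-\alpha}$ ($\alpha>0$) give horizontal tangent vectors and $X_{\alpha}$ give vertical ones, note that $J^{\mathrm{in}}$ is multiplication by $i$ on these root spaces, and compare with (\ref{forjotacanon}) to read off the sign. Your justification of the horizontal/vertical split via the isotropy subalgebras $\mathfrak{p}_{H_{0}}$ and $\mathfrak{p}_{-H_{0}}$ makes explicit a step the paper only asserts, but the argument is the same.
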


\begin{proof}
It suffices to verify that equality holds at the origin, since both complex
structures are $G$-invariant. The tangent space to $\mathcal{O}\left(
H_{0}\right) $ at the origin is generated by $\widetilde{W}\left(
H_{0}\right) =[W,H_{0}]$ with $W$ in $\mathfrak{g}_{\pm \alpha }$ and $%
\alpha $ running over all positive roots. If $W\in \mathfrak{g}_{-\alpha }$, 
$\alpha >0$, then $\widetilde{W}\left( H_{0}\right) $ is \textquotedblleft
horizontal \textquotedblright\ in the identification with $G\cdot \left(
H_{0},-H_{0}\right) $ whereas $\widetilde{W}\left( H_{0}\right) $ is
\textquotedblleft vertical\textquotedblright\ if $W\in \mathfrak{g}_{\alpha
} $, $\alpha >0$. For the complex structure on $\mathfrak{g}$ we have $%
X_{\alpha }\mapsto iX_{\alpha }$ and $iX_{\alpha }\mapsto -X_{\alpha }$.
Thus the complex structure $J^{\mathrm{in}}$ is given in the product by 
\begin{equation*}
\begin{array}{cc}
J^{\mathrm{in}}\left( \widetilde{X}_{-\alpha }\left( H_{0}\right) ,0\right)
=\left( \widetilde{iX}_{-\alpha }\left( H_{0}\right) ,0\right) &   J^{%
\mathrm{in}}\left( \widetilde{iX}_{-\alpha }\left( H_{0}\right) ,0\right)
=-\left( \widetilde{X}_{-\alpha }\left( H_{0}\right) ,0\right) \\ 
J^{\mathrm{in}}\left( 0,\widetilde{X}_{\alpha }\left( -H_{0}\right) \right)
=\left( 0,\widetilde{iX}_{\alpha }\left( -H_{0}\right) \right) &   J^{%
\mathrm{in}}\left( 0,\widetilde{iX}_{\alpha }\left( -H_{0}\right) \right)
=-\left( 0,\widetilde{X}_{\alpha }\left( -H_{0}\right) \right) ,%
\end{array}%
\end{equation*}%
which is precisely the negative of (\ref{forjotacanon}).
\end{proof}

Let $\left( \cdot ,\cdot \right) ^{B}$ be the Borel metric on $\mathbb{F}%
_{H_{0}}\times \mathbb{F}_{H_{0}^{\ast }}=\mathbb{F}_{\left(
H_{0},H_{0}^{\ast }\right) }$. If follows immediately from the definition
that $\left( \cdot ,\cdot \right) ^{B}$ is the product of the Borel metrics
on $\mathbb{F}_{H_{0}}$ and $\mathbb{F}_{H_{0}^{\ast }}$.

This metric together with the canonical complex structure $J$, define a
Hermitian structure on $\mathbb{F}_{H_{0}}\times \mathbb{F}_{H_{0}^{\ast }}$%
, which is invariant by $K\times K$ (compact group) but is not invariant by $%
G\times G$, because the metric itself is only invariant by $K\times K$. This
Hermitian structures restricts to a Hermitian structure in the open orbit $%
G\cdot \left( H_{0},-H_{0}\right) \approx \mathcal{O}\left( H_{0}\right) $,
which is invariant by the action of $K$ (but not by that of $G$). Denote by $%
\Omega \left( \cdot ,\cdot \right) =\left( \cdot ,J\left( \cdot \right)
\right) ^{B}$ the corresponding K\"ahler form, which is a symplectic form.
Since $\left( \cdot ,\cdot \right) ^{B}$ is the product metric and $J$ the
product complex structure, it follows that $\Omega $ is the product of the K%
\"{a}hler forms in $\mathbb{F}_{H_{0}}$ and $\mathbb{F}_{H_{0}^{\ast }}$.

\section{Lagrangean graphs in products of flags \label{seclagraph}}

By corollary \ref{corantisimpl} the map $R_{w_{0}}\colon \mathbb{F}%
_{H_{0}}\rightarrow \mathbb{F}_{H_{0}^{\ast }}$ is anti-symplectic with
respect to the K\"{a}hler forms on $\mathbb{F}_{H_{0}}$ and $\mathbb{F}%
_{H_{0}^{\ast }}$ given by the Borel metric and canonical complex
structures. Therefore, $\mathrm{graph}\left( R_{w_{0}}\right) $ is a
Lagrangean submanifold of the product symplectic structure. 
We now obtain further examples of Lagrangean graphs by composites (either on
the left or on the right) of $R_{w_{0}}$ with symplectic maps.

\begin{example}
If $k_{1},k_{2}\in K$ then the induced maps $k_{1}\colon \mathbb{F}%
_{H_{0}}\rightarrow \mathbb{F}_{H_{0}}$ and $k_{2}\colon \mathbb{F}%
_{H_{0}^{\ast }}\rightarrow \mathbb{F}_{H_{0}^{\ast }}$ are symplectic.
Therefore, $k_{1}\circ R_{w_{0}}\circ k_{2}$ is anti-symplectic, hence its
graph is a Lagrangean submanifold of $\mathbb{F}_{H_{0}}\times \mathbb{F}%
_{H_{0}^{\ast }}=\mathbb{F}_{\left( H_{0},H_{0}^{\ast }\right) }$. Such
graph is not contained in $G\cdot \left( H_{0},-H_{0}\right) $, nevertheless
its intersection with the orbit is still a Lagrangean submanifold
(noncompact if the graph is not contained in the orbit).
\end{example}

The tangent space $T_{\left( x,\phi \left( x\right) \right) }\mathrm{graph}%
\left( \phi \right) $ is given by the vectors $\left( u,d\phi _{x}\left(
u\right) \right) $. For maps $k\circ R_{w_{0}}$, with $k\in K$, the tangent
spaces admit the following description in terms of the adjoint
representation.

\begin{proposition}
\label{propesptangraph} Let $k\in K$. The tangent space to $\mathrm{graph}%
\left( k\circ R_{w_{0}}\right) $ at $\left( x,y\right) =\left( x,k\circ
R_{w_{0}}\left( x\right) \right) $ is given by 
\begin{equation*}
\{\left( A,\mathrm{Ad}\left( k\right) A\right) ^{\sim }\left( x,k\circ
R_{w_{0}}\left( x\right) \right) :A\in \mathfrak{u}\}
\end{equation*}%
where $\left( A,\mathrm{Ad}\left( k\right) A\right) ^{\sim }$ is the vector
field on $\mathbb{F}_{H_{0}}\times \mathbb{F}_{H_{0}^{\ast }}=\mathbb{F}%
_{\left( H_{0},H_{0}^{\ast }\right) }$ induced by $\left( A,\mathrm{Ad}%
\left( k\right) A\right) \in \mathfrak{u}\times \mathfrak{u}$ ($\mathfrak{u}$
= Lie algebra of $K$).
\end{proposition}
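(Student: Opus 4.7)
The plan is to unpack the graph tangent space using the standard fact that $T_{(x,\phi(x))}\mathrm{graph}(\phi)=\{(u,d\phi_x u):u\in T_x\mathbb{F}_{H_0}\}$, and then translate the right-hand side into the claimed form using two ingredients already at our disposal: invariance of induced vector fields under $R_{w_0}$ (Proposition \ref{propcampoinva}), and the standard intertwining formula between the adjoint representation and left translation by $k\in K$.

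First, since $K$ acts transitively on $\mathbb{F}_{H_0}$, every tangent vector at $x$ has the form $\widetilde{A}(x)$ for some $A\in\mathfrak{u}$; in fact the map $A\in\mathfrak{u}\mapsto\widetilde{A}(x)\in T_x\mathbb{F}_{H_0}$ is surjective (with kernel the isotropy subalgebra at $x$). Thus it suffices to prove that for every $A\in\mathfrak{u}$,
\begin{equation*}
d(k\circ R_{w_0})_x\bigl(\widetilde{A}(x)\bigr)=\widetilde{\mathrm{Ad}(k)A}\bigl(k\cdot R_{w_0}(x)\bigr),
\end{equation*}
so that the pair $(\widetilde{A}(x),d(k\circ R_{w_0})_x\widetilde{A}(x))$ coincides with the evaluation of the product field $(A,\mathrm{Ad}(k)A)^{\sim}$ at $(x,k\circ R_{w_0}(x))$.

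The display above is obtained in two elementary steps. By Proposition \ref{propcampoinva} the right translation satisfies $(R_{w_0})_{\ast}\widetilde{A}=\widetilde{A}$, so
\begin{equation*}
(dR_{w_0})_x\bigl(\widetilde{A}(x)\bigr)=\widetilde{A}\bigl(R_{w_0}(x)\bigr).
\end{equation*}
Second, for the left action of $k\in K$ on the flag one has the intertwining identity $dk_{z}\bigl(\widetilde{A}(z)\bigr)=\widetilde{\mathrm{Ad}(k)A}(kz)$, which follows from conjugating the flow: $k\,e^{tA}\,k^{-1}=e^{t\,\mathrm{Ad}(k)A}$, so the push-forward by $k$ of the flow of $\widetilde{A}$ is the flow of $\widetilde{\mathrm{Ad}(k)A}$. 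Applying this identity at $z=R_{w_0}(x)$ to the previous equality gives exactly the required formula by the chain rule.

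Combining the two steps, the tangent space to the graph is indeed
\begin{equation*}
\bigl\{\bigl(\widetilde{A}(x),\,\widetilde{\mathrm{Ad}(k)A}(k\cdot R_{w_0}(x))\bigr):A\in\mathfrak{u}\bigr\}=\{(A,\mathrm{Ad}(k)A)^{\sim}(x,k\circ R_{w_0}(x)):A\in\mathfrak{u}\},
\end{equation*}
which is the claim. There is no real obstacle: the only mild point is keeping track of the right-versus-left action convention when applying Proposition \ref{propcampoinva}, but the two formulas above are precisely compatible because $R_{w_0}$ commutes with the left $K$-action on $\mathbb{F}_{H_0}$.
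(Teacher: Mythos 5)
Your proof is correct and follows essentially the same route as the paper: first use $(R_{w_0})_{\ast}\widetilde{A}=\widetilde{A}$ (Proposition \ref{propcampoinva}) to move $\widetilde{A}(x)$ to $\widetilde{A}(R_{w_0}(x))$, then push forward by $k$ via the conjugation identity $k\,e^{tA}\,k^{-1}=e^{t\,\mathrm{Ad}(k)A}$ to get $\widetilde{\mathrm{Ad}(k)A}(k\circ R_{w_0}(x))$, and conclude using that the $\widetilde{A}(x)$, $A\in\mathfrak{u}$, exhaust $T_x\mathbb{F}_{H_0}$ and that the product action works coordinatewise. No gaps.
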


\begin{proof}
If $A\in \mathfrak{u}$ then $\left( R_{w_{0}}\right) _{\ast }\widetilde{A}=%
\widetilde{A}$, thus $\left( dR_{w_{0}}\right) _{x}\left( \widetilde{A}%
\left( x\right) \right) =\widetilde{A}\left( R_{w}\left( x\right) \right) $.
Applying $dk_{R_{w}\left( x\right) }$ to this equality we get 
\begin{eqnarray*}
\left( dk\circ R_{w_{0}}\right) _{x}\left( \widetilde{A}\left( x\right)
\right) &=&dk_{R_{w}\left( x\right) }\left( \widetilde{A}\left( R_{w}\left(
x\right) \right) \right) \\
&=&\widetilde{\mathrm{Ad}\left( k\right) A}\left( k\circ R_{w_{0}}\left(
x\right) \right) .
\end{eqnarray*}%
It follows that the tangent space to the graph is $\left( \widetilde{A}%
\left( x\right) ,\widetilde{\mathrm{Ad}\left( k\right) A}\left( k\circ
R_{w_{0}}\left( x\right) \right) \right) $. But the action of $K\times U $
on $\mathbb{F}_{H_{0}}\times \mathbb{F}_{H_{0}^{\ast }}$ works
coordinatewise. Hence
\begin{equation*}
\left( \widetilde{A}\left( x\right) ,\widetilde{\mathrm{Ad}\left( k\right) A}%
\left( k\circ R_{w_{0}}\left( x\right) \right) \right) =\left( A,\mathrm{Ad}%
\left( k\right) A\right) ^{\sim }\left( x,k\circ R_{w_{0}}\left( x\right)
\right)
\end{equation*}%
which completes the proof, because the vectors $\widetilde{A}\left( x\right) 
$, $A\in \mathfrak{u}$, exhaust the tangent space at $x$.
\end{proof}

\subsection{Graphs in $\mathbb{F}_{H_{\protect\mu }}\times \mathbb{F}_{H_{%
\protect\mu }^{\ast }}$}

The isomorphism between the open orbit in $\mathbb{F}_{H_{\mu }}\times 
\mathbb{F}_{H_{\mu }^{\ast }}$ (diagonal action) and the orbit $G\cdot
\left( v_{0}\otimes \varepsilon _{0}\right) $ of $v_{0}\otimes \varepsilon
_{0}\in V\otimes V^{\ast }$ (representation of $G$) leads to a convenient
description of the intersection of graphs of anti-holomorphic functions $%
\mathbb{F}_{H_{\mu }}\rightarrow \mathbb{F}_{H_{\mu }^{\ast }}$ with the
open orbit.

We return to the anti-holomorphic functions considered earlier $m\circ
R_{w_{0}}:\mathbb{F}_{H_{\mu }}\rightarrow \mathbb{F}_{H_{\mu }^{\ast }}$
with $m\in T$, the maximal torus. The submanifold determined by $\mathrm{%
graph}\left( R_{w_{0}}\right) $ in $R_{w_{0}}$ on $\mathbb{F}_{H_{\mu
}}\times \mathbb{F}_{H_{\mu }^{\ast }}$ is the orbit of the compact group $K$
through $\left( v_{0},\varepsilon _{0}\right) $. This orbit stays inside $%
G\cdot \left( v_{0},\varepsilon _{0}\right) $ and is identified with the $K$%
-orbit of $v_{0}\otimes \varepsilon _{0}$ in $V\otimes V^{\ast }$ (by
equivariance). The isomorphism with the adjoint orbit $\mathrm{Ad}\left(
G\right) H_{\mu }$ associates this $K$-orbit inside $V\otimes V^{\ast }$
with the intersection \ $i\mathfrak{u}\cap $\textrm{Ad}$\left( G\right)
H_{\mu }$ (the Hermitian matrices in the case of $\mathfrak{sl}\left( n,%
\mathbb{C}\right) $ or else the zero section of $T^{\ast }\mathbb{F}_{H_{\mu
}}$). This set is formed by the elements $v\otimes \varepsilon \in G\cdot
\left( v_{0}\otimes \varepsilon _{0}\right) $ such that $\ker \varepsilon
=v^{\bot }$ (with respect to the $K$-invariant Hermitian form $\left( \cdot
,\cdot \right) ^{\mu }$ ), since $u\in K$ is an isometry of $\left( \cdot
,\cdot \right) ^{\mu }$ and $\ker \varepsilon _{0}=v_{0}^{\bot }$. The
converse is true as well: if $v\otimes \varepsilon \in G\cdot \left(
v_{0}\otimes \varepsilon _{0}\right) $ and $\ker \varepsilon =v^{\bot }$
then $v\otimes \varepsilon \in \mathrm{graph}\left( R_{w_{0}}\right) $. In
fact, if $\ker \varepsilon =v^{\bot }$ and $X\in \mathfrak{u}$ then $\rho
_{\mu }\left( X\right) $ is anti-Hermitian, thus $\left( \rho _{\mu }\left(
X\right) v,v\right) ^{\mu }$ is purely imaginary and since $\ker \varepsilon
=v^{\bot }$, then $\varepsilon \left( \rho _{\mu }\left( X\right) v\right) $
is purely imaginary as well. Therefore, $\langle M\left( v\otimes
\varepsilon \right) ,X\rangle =\varepsilon \left( \rho _{\mu }\left(
X\right) v\right) $ is imaginary for arbitrary $X\in \mathfrak{u}$, which
implies that $M\left( v\otimes \varepsilon \right) \in i\mathfrak{u}$.

Summing up, we obtain the following description of $\mathrm{graph}\left(
R_{w_{0}}\right) $ regarded as a subset of $G\cdot \left( v_{0}\otimes
\varepsilon _{0}\right) $. Consider $\Phi ^{-1}\left( \mathrm{graph}\left(
R_{w_{0}}\right) \right) \subset G\cdot \left( v_{0}\otimes \varepsilon
_{0}\right) $, which, abusing notation, we also denoted by $\mathrm{graph}%
\left( R_{w_{0}}\right) $:

\begin{proposition}
$\mathrm{graph}\left( R_{w_{0}}\right) =\{v\otimes \varepsilon \in G\cdot
\left( v_{0}\otimes \varepsilon _{0}\right) :\ker \varepsilon =v^{\bot }\}.$ 
\end{proposition}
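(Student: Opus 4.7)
The plan is to prove the two inclusions separately, using the identification (implicit in Proposition \ref{proporbitcompact} together with the diffeomorphism $\Phi$ of Proposition \ref{phi}) that $\Phi^{-1}(\mathrm{graph}(R_{w_0}))$ coincides with the $K$-orbit $K\cdot(v_0\otimes\varepsilon_0)$ inside $G\cdot(v_0\otimes\varepsilon_0)$. Recall that the Hermitian form $(\cdot,\cdot)^\mu$ is $K$-invariant, the weight basis $\{v_0,\ldots,v_N\}$ is orthonormal, and the dual basis vector $\varepsilon_0$ satisfies $\ker\varepsilon_0 = v_0^\perp$ and $\varepsilon_0 = (\cdot,v_0)^\mu$.

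For the inclusion $\subseteq$, I would take $v\otimes\varepsilon = \rho_\mu(k)v_0\otimes \rho_\mu^*(k)\varepsilon_0$ with $k\in K$. Since $\rho_\mu(k)$ is unitary with respect to $(\cdot,\cdot)^\mu$, a direct calculation gives $\rho_\mu^*(k)\varepsilon_0 = (\cdot,\rho_\mu(k)v_0)^\mu$, so $\ker\varepsilon = (\rho_\mu(k)v_0)^\perp = v^\perp$.

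For the inclusion $\supseteq$, the strategy is to show that if $\ker\varepsilon = v^\perp$, then the moment map value $M(v\otimes\varepsilon)$ lies in $i\mathfrak{u}$; combined with the already-established identities $M(G\cdot(v_0\otimes\varepsilon_0)) = \mathrm{Ad}(G)H_\mu$ and $\mathrm{Ad}(G)H_\mu\cap i\mathfrak{u} = \mathrm{Ad}(K)H_\mu$, and with injectivity of $M$ on the orbit, this will force $v\otimes\varepsilon$ to lie in $K\cdot(v_0\otimes\varepsilon_0) = \mathrm{graph}(R_{w_0})$. To implement this, I would first observe that the trace $\varepsilon(v) = \mathrm{tr}(v\otimes\varepsilon)$ is a $G$-invariant of the orbit, equal to $\varepsilon_0(v_0) = 1$. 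Combining $\ker\varepsilon = v^\perp$ with $\varepsilon(v) = 1$ forces $\varepsilon = c(\cdot,v)^\mu$ with $c = \|v\|^{-2}\in\mathbb{R}_{>0}$. Then for any $X\in\mathfrak{u}$, anti-Hermiticity of $\rho_\mu(X)$ implies $(\rho_\mu(X)v,v)^\mu\in i\mathbb{R}$, so
\begin{equation*}
\langle M(v\otimes\varepsilon),X\rangle \;=\; \varepsilon(\rho_\mu(X)v) \;=\; c\,(\rho_\mu(X)v,v)^\mu \;\in\; i\mathbb{R},
\end{equation*}
for every $X\in\mathfrak{u}$. Since the Cartan--Killing form is real on $\mathfrak{u}$ and purely imaginary pairings detect the splitting $\mathfrak{g} = \mathfrak{u}\oplus i\mathfrak{u}$, this forces $M(v\otimes\varepsilon)\in i\mathfrak{u}$.

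The main subtle point is the positivity (in particular, reality) of the constant $c$. A priori the pair $(v,\varepsilon)$ is only defined up to rescaling $(v,\varepsilon)\mapsto(\lambda v,\lambda^{-1}\varepsilon)$, and the condition $\ker\varepsilon = v^\perp$ alone would determine $c$ only up to a positive real factor; the phase of $c$ could in principle be arbitrary. It is precisely the $G$-invariance of the trace $\varepsilon(v) = 1$ that rigidifies the normalization and guarantees $c\in\mathbb{R}_{>0}$. Once this point is settled, the chain $\varepsilon(\rho_\mu(X)v)\in c\cdot i\mathbb{R}\subset i\mathbb{R}$ closes the argument, and everything else reduces to bookkeeping already done in the paragraph preceding the statement.
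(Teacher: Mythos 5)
Your proof is correct and follows essentially the same route as the paper: identify $\mathrm{graph}\left( R_{w_{0}}\right) $ with the $K$-orbit of $v_{0}\otimes \varepsilon _{0}$, obtain the inclusion $\subseteq $ from unitarity of $\rho _{\mu }\left( k\right) $ with respect to $\left( \cdot ,\cdot \right) ^{\mu }$, and obtain $\supseteq $ by showing $M\left( v\otimes \varepsilon \right) \in i\mathfrak{u}\cap \mathrm{Ad}\left( G\right) H_{\mu }=\mathrm{Ad}\left( K\right) H_{\mu }$ and invoking equivariance and injectivity of $M$ on the orbit. Your additional observation that the invariance of the trace $\varepsilon \left( v\right) =1$ pins down the constant in $\varepsilon =c\left( \cdot ,v\right) ^{\mu }$ to be real and positive supplies a justification the paper leaves implicit, since the paper passes from ``$\left( \rho _{\mu }\left( X\right) v,v\right) ^{\mu }$ is purely imaginary'' to ``$\varepsilon \left( \rho _{\mu }\left( X\right) v\right) $ is purely imaginary'' without addressing the phase of that constant.
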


Consider now the graph of $m\circ R_{w_{0}}:\mathbb{F}_{H_{\mu }}\rightarrow 
\mathbb{F}_{H_{\mu }^{\ast }}$ with $m\in T$. In general $\mathrm{graph}%
\left( m\circ R_{w_{0}}\right) \subset \mathbb{F}_{H_{\mu }}\times \mathbb{F}%
_{H_{\mu }^{\ast }}$ is not contained in the open orbit and, consequently,
intercepts this orbit in a noncompact subset. In any case, take the subgroup 
\begin{equation*}
U^{m}=\{\left( u,mum^{-1}\right) \in U\times U:u\in U\}.
\end{equation*}%
Then, $\mathrm{graph}\left( m\circ R_{w_{0}}\right) $ is the orbit of $K^{m} 
$ through $\left( v_{0},\varepsilon _{0}\right) $. This happens because, if $%
x=u\cdot v_{0}\in \mathbb{F}_{H_{\mu }}$ then $R_{w_{0}}\left( x\right)
=u\cdot \varepsilon _{0}$ therefore 
\begin{equation*}
\left( x,m\circ R_{w_{0}}\left( x\right) \right) =\left( x,m\cdot
u\varepsilon _{0}\right) .
\end{equation*}%
This means that $\mathrm{graph}\left( m\circ R_{w_{0}}\right) $ is formed by
elements of the form $\left( x,my\right) $ with $\left( x,y\right) \in 
\mathrm{graph}\left( R_{w_{0}}\right) $, that is, 
\begin{equation*}
\mathrm{graph}\left( m\circ R_{w_{0}}\right) =m_{2}\left( \mathrm{graph}%
\left( R_{w_{0}}\right) \right)
\end{equation*}%
where $m_{2}\left( x,y\right) =\left( y,mx\right) $. Passing to the
realization inside $V\otimes V^{\ast }$ we obtain a geometric realization of 
$\Phi ^{-1}\left( \mathrm{graph}\left( m\circ R_{w_{0}}\right) \right) $,
also denoted by $\mathrm{graph}\left( m\circ R_{w_{0}}\right) $:

\begin{proposition}
$\mathrm{graph}\left( m\circ R_{w_{0}}\right) =\{v\otimes \rho _{\mu }^{\ast
}\left( m\right) \varepsilon \in G\cdot \left( v_{0}\otimes \varepsilon
_{0}\right) :\ker \varepsilon =v^{\bot }\}.$
\end{proposition}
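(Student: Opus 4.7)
The plan is to combine the preceding proposition (which characterizes $\mathrm{graph}\left(R_{w_0}\right)$ inside $G\cdot\left(v_0\otimes\varepsilon_0\right)\subset V\otimes V^\ast$) with the factorization
\begin{equation*}
\mathrm{graph}\left(m\circ R_{w_0}\right) = \{\left(x,\, m\cdot y\right) : \left(x,y\right)\in \mathrm{graph}\left(R_{w_0}\right)\},
\end{equation*}
which was already made explicit in the paragraph preceding the proposition (via the left translation by $m$ on the second factor). Modulo this rewriting, the new proposition is essentially a transport of the previous one across the identification $\Phi$.

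First I would take an arbitrary element of $\mathrm{graph}\left(m\circ R_{w_0}\right)$ that lies in the open orbit $G\cdot\left(\left[v_0\right],\left[\varepsilon_0\right]\right)$ and write it as $\left(\left[v\right],\, m\cdot\left[\varepsilon\right]\right) = \left(\left[v\right],\left[\rho_\mu^\ast\left(m\right)\varepsilon\right]\right)$, where $\left(\left[v\right],\left[\varepsilon\right]\right)\in\mathrm{graph}\left(R_{w_0}\right)$. This step uses only that $m\in T\subset K$ acts on $\mathbb{F}_{H_\mu^\ast}\subset\mathbb{P}\left(V^\ast\right)$ through the projectivization of $\rho_\mu^\ast\left(m\right)$.

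Second, I would invoke the previous proposition to identify $\left(\left[v\right],\left[\varepsilon\right]\right)$ with $\Phi\left(v\otimes\varepsilon\right)$ for some $v\otimes\varepsilon\in G\cdot\left(v_0\otimes\varepsilon_0\right)$ satisfying $\ker\varepsilon = v^\perp$. Since $\Phi$ is defined on decomposable tensors by $\Phi\left(v'\otimes\varepsilon'\right) = \left(\left[v'\right],\left[\varepsilon'\right]\right)$, we have $\Phi\left(v\otimes\rho_\mu^\ast\left(m\right)\varepsilon\right) = \left(\left[v\right],\left[\rho_\mu^\ast\left(m\right)\varepsilon\right]\right)$, and so the preimage of our original element under $\Phi$ is precisely $v\otimes\rho_\mu^\ast\left(m\right)\varepsilon$. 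Membership in $G\cdot\left(v_0\otimes\varepsilon_0\right)$ follows from the hypothesis that the point lies in the open orbit, giving inclusion $\subseteq$. The reverse inclusion runs the same chain in reverse: given $v\otimes\rho_\mu^\ast\left(m\right)\varepsilon$ with $v\otimes\varepsilon\in G\cdot\left(v_0\otimes\varepsilon_0\right)$ and $\ker\varepsilon = v^\perp$, the previous proposition yields $\left(\left[v\right],\left[\varepsilon\right]\right)\in\mathrm{graph}\left(R_{w_0}\right)$, and acting by $m$ on the second coordinate produces an element of $\mathrm{graph}\left(m\circ R_{w_0}\right)$.

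The main subtlety, rather than a serious obstacle, is that the map $v\otimes\varepsilon\mapsto v\otimes\rho_\mu^\ast\left(m\right)\varepsilon$ acts only on the second tensor factor and therefore does \emph{not} preserve the diagonal orbit $G\cdot\left(v_0\otimes\varepsilon_0\right)$. This is exactly why $\mathrm{graph}\left(m\circ R_{w_0}\right)$ fails in general to be contained in the open orbit (as illustrated in the $\mathfrak{sl}\left(2,\mathbb{C}\right)$ example above), and why the conditions ``$v\otimes\rho_\mu^\ast\left(m\right)\varepsilon\in G\cdot\left(v_0\otimes\varepsilon_0\right)$'' and ``$\ker\varepsilon = v^\perp$'' both have to appear explicitly in the description of the right-hand side.
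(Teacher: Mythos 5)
Your argument is exactly the paper's: the proposition is stated there without a separate proof precisely because the preceding paragraph already establishes $\mathrm{graph}\left(m\circ R_{w_{0}}\right)=m_{2}\left(\mathrm{graph}\left(R_{w_{0}}\right)\right)$, and the statement is then obtained by transporting the previous proposition's description of $\mathrm{graph}\left(R_{w_{0}}\right)$ through $\Phi^{-1}$, which is what you do. The only loose end (shared with the paper) is your phrase ``the preimage under $\Phi$ is precisely $v\otimes \rho _{\mu }^{\ast }\left( m\right)\varepsilon$'': in general $\Phi^{-1}\left(\left[v\right],\left[\rho _{\mu }^{\ast }\left( m\right)\varepsilon\right]\right)$ is only a scalar multiple of that tensor, but since the condition $\ker \varepsilon =v^{\bot }$ is invariant under rescaling $\varepsilon$, the representative can be renormalized and the equality of sets is unaffected.
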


In conclusion, we have described the following families of Lagrangean
submanifolds of the adjoint orbit $\mathcal{O}\left( H_{\Theta }\right) =%
\mathrm{Ad}\left( G\right) \cdot H_{\Theta }\approx G/Z_{\Theta }$:

\begin{theorem}
For $k_1,k_2 \in K$ and for $m \in T$:

\begin{itemize}
\item $\mathrm{graph}\left(k_1\circ R_{w_{0}}\circ k_{2}\right)$ corresponds
to a Lagrangean submanifold of $\mathcal{O}\left( H_{\Theta }\right)$, and

\item $\mathrm{graph}\left(m\circ R_{w_{0}}\right)$ corresponds to a
Lagrangean submanifold of $\mathcal{O}\left( H_{\Theta }\right).$
\end{itemize}
\end{theorem}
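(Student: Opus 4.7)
The plan is to assemble facts that have already been established. The proof reduces to three ingredients: first, Corollary \ref{corantisimpl}, which states that $R_{w_{0}}\colon \mathbb{F}_{H_{\Theta}} \to \mathbb{F}_{H_{\Theta}^{\ast}}$ is anti-symplectic for the Kähler forms coming from the Borel metric and the canonical complex structures; second, the observation that left translations by elements of $K$ on each flag are symplectomorphisms; and third, the general principle (recalled at the start of Section \ref{seclagraph}) that the graph of an anti-symplectic map is a Lagrangean submanifold of the product symplectic manifold, and that intersecting a Lagrangean submanifold with an open subset yields a Lagrangean submanifold of that subset.

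The core step is to verify the symplectomorphism claim for $K$. For any $k \in K$, the induced diffeomorphism of $\mathbb{F}_{H_{\Theta}}$ (and of $\mathbb{F}_{H_{\Theta}^{\ast}}$) preserves the Kähler form $\Omega = (\cdot, J\cdot)^{B}$ because both the Borel metric $(\cdot,\cdot)^{B}$ and the canonical complex structure $J$ are $K$-invariant by construction. Since $T \subset K$, the element $m \in T$ acts as a symplectomorphism on $\mathbb{F}_{H_{\Theta}^{\ast}}$ in the same way.

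Combining this with $R_{w_{0}}^{\ast}\Omega_{H_{\Theta}^{\ast}} = -\Omega_{H_{\Theta}}$ gives the anti-symplectic property by a formal pullback chase:
\[
(k_{1} \circ R_{w_{0}} \circ k_{2})^{\ast}\Omega_{H_{\Theta}^{\ast}} = k_{2}^{\ast}R_{w_{0}}^{\ast}k_{1}^{\ast}\Omega_{H_{\Theta}^{\ast}} = k_{2}^{\ast}R_{w_{0}}^{\ast}\Omega_{H_{\Theta}^{\ast}} = -k_{2}^{\ast}\Omega_{H_{\Theta}} = -\Omega_{H_{\Theta}},
\]
and the same calculation (with $k_{1}$ replaced by $m$ and $k_{2}$ by the identity) shows that $m \circ R_{w_{0}}$ is anti-symplectic. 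The graph principle then produces Lagrangean submanifolds of $\mathbb{F}_{H_{\Theta}} \times \mathbb{F}_{H_{\Theta}^{\ast}}$ with respect to the product form $\Omega_{H_{\Theta}} \times \Omega_{H_{\Theta}^{\ast}}$.

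To conclude, I invoke the embedding of $\mathcal{O}(H_{\Theta})$ as the open dense $G$-orbit $G \cdot (b_{0}, b_{w_{0}}) \subset \mathbb{F}_{H_{\Theta}} \times \mathbb{F}_{H_{\Theta}^{\ast}}$ from Section \ref{secsplit}; the symplectic form on $\mathcal{O}(H_{\Theta})$ induced from the embedding is the restriction of $\Omega_{H_{\Theta}} \times \Omega_{H_{\Theta}^{\ast}}$, so the intersection of each Lagrangean graph with the open orbit is a Lagrangean submanifold of $\mathcal{O}(H_{\Theta})$. There is no genuine obstacle in the argument; the one subtlety worth flagging is that these submanifolds may fail to be compact whenever the graph intersects the complement of the open orbit, exactly as in the explicit $\mathfrak{sl}(2,\mathbb{C})$ computation preceding the theorem.
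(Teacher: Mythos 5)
Your argument is correct and follows the paper's own route: Corollary \ref{corantisimpl} gives that $R_{w_{0}}$ is anti-symplectic, the $K$-invariance of the Borel metric and of the canonical complex structure makes the induced maps of $k_{1},k_{2}\in K$ and $m\in T\subset K$ symplectomorphisms, so the composites are anti-symplectic, their graphs are Lagrangean in $\bigl(\mathbb{F}_{H_{\Theta}}\times\mathbb{F}_{H_{\Theta}^{\ast}},\Omega_{H_{\Theta}}\times\Omega_{H_{\Theta}^{\ast}}\bigr)$, and intersecting with the open dense orbit identified with $\mathcal{O}\left(H_{\Theta}\right)$ yields Lagrangean (possibly noncompact) submanifolds. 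This is exactly the chain of reasoning assembled in Sections \ref{compact} and \ref{seclagraph}, so nothing further is needed.
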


\end{document}